\documentclass[a4paper, 11pt]{article}
\usepackage [a4paper,left=2.5cm,bottom=2.5cm,right=2.5cm,top=2.5cm]{geometry}
\usepackage[english]{babel}
\usepackage{amssymb,amsfonts}
\usepackage{amsmath,amsthm}
\usepackage{stmaryrd}
\usepackage{ulem}
\usepackage{subcaption}
\usepackage{tabularx,array}
\setcounter{tocdepth}{3}
\usepackage{graphicx}
\usepackage{dsfont}
\usepackage{enumitem} 
\usepackage{comment}
\usepackage{mathtools}
\usepackage{xcolor}
\usepackage[colorlinks,linkcolor=blue,citecolor=blue,urlcolor=blue]{hyperref}
\usepackage{enumitem}
\usepackage[capitalise]{cleveref}

\newtheorem{remark}{Remark}
\newtheorem{theorem}{Theorem}
\newtheorem{lemma}{Lemma}
\newtheorem{corollary}{Corollary}
\newtheorem{proposition}{Proposition}
\newtheorem{definition}{Definition}

\newcommand{\keywords}[1]{\par\addvspace\baselineskip
\noindent\enspace\ignorespaces#1}
\newcommand{\ind}[1]{\mathds{1}_{\{#1\}}}

\newcommand{\bqn}{\begin{equation}}
\newcommand{\eqn}{\end{equation}}
\newcommand{\bag}{\begin{align}}
\newcommand{\eag}{\end{align}}

\newcommand{\Cloc}{\ensuremath{\mathcal{C}_{\mathrm{loc}}}}

\newcommand{\vertiii}[1]{{\left\vert\kern-0.25ex\left\vert\kern-0.25ex\left\vert #1 \right\vert\kern-0.25ex\right\vert\kern-0.25ex\right\vert}}

\newcommand{\C}{\mathbb C}
\newcommand{\ES}{\mathbb E}
\newcommand{\PE}{\mathbb P}

\newcommand{\R}{\mathbb R}
\newcommand{\ER}{\mathbb R}

\newcommand{\frC}{\mathfrak{C}}




\newcommand{\lp}{\left(}
\newcommand{\rp}{\right)}

\def\R{\mathbb{R}}
\newcommand{\E}{\mathbb{E}}

\newcommand{\ar}{\textcolor{black}}

\newcommand{\tcr}{\textcolor{black}}
\newcommand{\TCR}{\textcolor{black}}
\newcommand{\fab}{\textcolor{black}}
\newcommand{\mes}{\mu}

\newcommand{\tcb}{\textcolor{black}}
\newlist{sk}{enumerate}{1}
\setlist[sk]{label=$\mathbf{(S)}_{\kappa,R,\lambda}$:, ref=$\mathbf{(S)}_{\kappa,R,\lambda}$, wide, labelwidth=!, labelindent=0pt}

\newcommand{\mesinv}{\Pi}

\begin{document}

\title{Fast convergence rates for estimating the stationary density in SDEs driven by a fractional Brownian motion with semi-contractive drift}

\author{Chiara Amorino\thanks{Universitat Pompeu Fabra and Barcelona Graduate School of Economics, Department of Economics and Business, Ram\'on Trias Fargas 25-27, 08005, Barcelona, Spain.}, Eulalia Nualart\thanks{Universitat Pompeu Fabra and Barcelona Graduate School of Economics, Department of Economics and Business, Ram\'on Trias Fargas 25-27, 08005
Barcelona, Spain. EN acknowledges support from the Spanish MINECO grant PGC2018-101643-B-I00 .}, Fabien Panloup \thanks{LAREMA, Facult\'e des Sciences, 2 Boulevard Lavoisier, Universit\'e d'Angers, 49045
Angers, France} and Julian Sieber \thanks{Department of Mathematics, Imperial College London, 180 Queen's Gate, London
SW7 2RH, United Kingdom}} 

\maketitle

\begin{abstract}
\TCR{We study the estimation of the invariant density of additive fractional stochastic differential equations with Hurst parameter $H \in (0,1)$. We first focus on continuous observations and develop a kernel-based estimator achieving faster convergence rates than previously available. This result stems from a martingale decomposition combined with new bounds on the (conditional) convergence in total variation to equilibrium of fractional SDEs. For $H<1/2$, we further refine the rates based on recent bounds on the marginal density.  
We then extend the methodology to discrete observations, showing that the same convergence rates can be attained. Moreover, we establish concentration inequalities for the estimator and introduce a data-driven bandwidth selection procedure that adapts to unknown smoothness. Numerical experiments for the fractional Ornstein–Uhlenbeck process illustrate the estimator's practical performance.}
Finally, our results weaken the usual convexity assumptions on the drift component, allowing us to consider settings where strong convexity only holds outside a compact set. \\
\\
MSC class: primary 60G22, 60H10; secondary 62M09.
\end{abstract}

\keywords{{\bf Keywords:} Convergence rate, non-parametric statistics, fractional Brownian motion, stationary density, density estimation}

\section{Introduction}
Fractional Brownian motion (fBm) is a stochastic process that has attracted considerable attention in statistics due to its ability to capture long-range dependence and self-similarity. Its flexibility makes it a valuable modeling tool across a wide range of disciplines. \TCR{In geophysics and hydrology, fBm has been used to model phenomena such as earthquake occurrences, river flows, and precipitation patterns (see, e.g., \cite{Earthquakes, Hurst}), where long-range dependence provides a realistic description of these processes. In economics and finance, it has been applied to model and forecast stock prices, exchange rates, and commodity prices, with implications for risk management, option pricing, and the study of long memory and rough volatility (see, e.g., \cite{3rep, 4rep} for long-memory volatility and \cite{2rep, 6rep, Wan} for rough volatility). Further applications include modeling traffic patterns in high-speed data networks (\cite{Cag04}), medical processes (\cite{Lai00}), and turbulence phenomena (\cite{5rep}).} The breadth of these applications continues to motivate research on statistical inference for fBm, which remains of both theoretical and practical importance.

In this paper, we consider a time horizon $T>0$ and the stochastic differential equation (SDE) with values in $\R^d$ given by
\begin{equation}\label{eq:sde}
dX_t = \sigma\, dB_t + b(X_t)\, dt, \qquad t \in [0,T],
\end{equation}
where $X_0 \in \mathbb{R}^d$ is a given (random) initial condition, $b:\R^d \to \R^d$ is a measurable function, and $\sigma$ is a nondegenerate $d \times d$ matrix throughout the paper. 

The process $B = (B_t^1, \ldots, B_t^d)_{t \ge 0}$ is a $d$-dimensional fBm with Hurst parameter $H \in (0,1)$, defined on a complete probability space $(\Omega, \mathcal{F}, \mathbb{P})$. Its law is characterized by the covariance function
\begin{equation*}\label{eq:cov-fbm}
\mathbb{E}\big[B_t^i B_s^j\big] = \frac{1}{2} \left( |t|^{2H} + |s|^{2H} - |t-s|^{2H} \right) \delta_{i,j},
\qquad s,t \ge 0.
\end{equation*}
Even in this non-Markovian setting, where the Hurst exponent $H \neq 1/2$, the process $X$ can be embedded into an infinite-dimensional Markov structure, as shown in~\cite{Hairer} and further developed in \cref{s: model and main}. This embedding ensures the existence of a unique invariant distribution for $X$ with density $\pi: \mathbb{R}^d \to \mathbb{R}$, under standard regularity conditions on $b$ and $\sigma$ (see \cref{prop:li_sieber} and references therein).

We study the nonparametric estimation of the invariant density $\pi$ from the continuous observation of $(X_t)_{t \in [0,T]}$ as $T \to \infty$. Throughout, $X$ is assumed to evolve in its stationary regime—a standard assumption in the analysis of ergodic systems, reflecting that many processes reach equilibrium before observation.

\TCR{The estimation of invariant densities and drift coefficients in stochastic differential equations is a classical topic in statistics, with substantial theoretical and applied interest. Recent work has focused on models driven by fBm, which introduce additional challenges beyond the Markovian framework. Over the past decade, considerable progress has been made on parameter estimation for such processes; see, e.g., \cite{1rep, 7rep, 8rep}. Advances include drift estimation in additive fBm models from continuous~\cite{27HR} and discrete data~\cite{22HR}, estimation of diffusion coefficients~\cite{2HR} and the Hurst parameter~\cite{11HR}, and joint estimation of all parameters~\cite{HR}. In contrast, the nonparametric estimation of the drift or invariant density has received comparatively little attention.}

In the diffusion framework with standard Brownian noise, invariant density estimation has received substantial attention for its numerical and theoretical significance. On the computational side, efficient algorithms for approximating invariant densities, such as Markov Chain Monte Carlo methods, are well established (see, e.g., \cite{LamPag}). Theoretically, the analysis of invariant distributions provides key insights into the stability of stochastic differential systems (see \cite{Ban}) and serves as a fundamental tool for nonparametric drift estimation (see, e.g., \cite{Sch13}).  

In the classical one-dimensional diffusion setting, invariant density estimators can achieve parametric convergence rates under standard nonparametric assumptions (see Chapter~4.2 in \cite{Kut}). This result relies on properties of the diffusion local time, which restrict the method to $d=1$. In this context, \cite{Schmisser} investigates the discrete-time case and constructs penalized least squares estimators for successive derivatives of the stationary density, obtaining for $j=0$ the same convergence rate as in~\cite{ComMer2, ComMer}.  

In higher dimensions, the asymptotic statistical equivalence for drift estimation in multidimensional diffusions is established in \cite{DR}. Their results imply fast pointwise convergence rates for invariant density estimators: if the density belongs to an isotropic H\"older class $\mathcal{H}(\beta,L)$, the rate is $T^{-\frac{2\beta}{2\beta + d - 2}}$. The proof relies on variance bounds for additive diffusion functionals, derived from spectral gap inequalities and transition density estimates. Extensions to anisotropic settings were later obtained in \cite{Strauch}, and the optimality of this rate was established in~\cite{Minimax}.

As discussed above, only a few works address the nonparametric estimation of the drift coefficient and invariant density for fractional SDEs. In~\cite{MisPra, Sau}, the authors study consistency and convergence rates of nonparametric drift estimators, while~\cite{ComMar} analyzes Nadaraya–Watson-type estimators for the case $H > 1/2$.

More closely related to our objectives,~\cite{adaptive} investigates the anisotropic estimation of the invariant density associated with~\eqref{eq:sde} from discrete observations. Under a strong convexity assumption, the authors propose a data-driven method achieving, in the isotropic setting, the rate $T^{-(2 - \max(2H, 1)) \frac{\beta}{2(1 + \beta) + 2d}}$. Their analysis relies on novel concentration inequalities for the stationary process, controlling deviations between a functional and its mean through a decomposition of conditioned paths—an approach revisited in the first part of this paper.

\subsection{Our contribution}

Our approach estimates the invariant density $\pi$ using the kernel estimator $\hat{\pi}_{h,T}(x)$, whose mean squared error decomposes into bias and variance components. We first derive an upper bound for the stochastic term via a martingale decomposition (see \cref{th: upper bound}), yielding a faster convergence rate than in~\cite{adaptive}, as shown in \cref{th: rate start} and discussed in Remark~\ref{rk: cont}.
This improvement stems from the observation that, in our framework, the concentration inequality must be applied directly to the kernel estimator, whose Lipschitz norm is substantially larger than its supremum norm. Our main result is obtained by establishing new bounds for the martingale decomposition, replacing the Lipschitz norm in~\cite{adaptive} with the supremum norm, better suited to our setting.
With this adjustment, we show in Theorem \ref{th: rate start} that for any \(\varepsilon > 0\) sufficiently small, there exist constants \(c\) and \(c_\varepsilon\) and a suitable bandwidth \(\TCR{h = (h_1, \dots, h_d)}\) for the kernel estimator, depending on \(T\), \(d\), and the smoothness parameters such that for all $x \in \R^d$,
\begin{equation}{\label{eq: rate intro}}
\mathbb{E}[|\hat{\pi}_{h,T}(x) - \pi (x)|^2] \le 
\begin{cases} 
c \left(\frac{1}{T}\right)^{\frac{\TCR{\bar{\beta}}}{\TCR{\bar{\beta}} + d}} & \text{if $H<1/2$},\\[2mm]
c_\varepsilon \left(\frac{1}{T}\right)^{\frac{2\TCR{\bar{\beta}}(1 - H)}{\TCR{\bar{\beta}} + d} - \varepsilon} & \text{if $H>1/2$},
\end{cases}
\end{equation}
\TCR{where $\bar{\beta}$ denotes the harmonic mean of the smoothness parameters, i.e., 
$\frac{1}{\bar{\beta}} := \frac{1}{d} \sum_{l = 1}^d \frac{1}{\beta_l}$.}

Then, we focus on improving the convergence rate in the case $H < \frac{1}{2}$. This refinement is based on novel bounds for the density of the semigroup, specifically developed for the non-Markovian setting (\tcb{see \cite{LPS22}}). These bounds exploit the fact that the $L^1$ norm of the kernel density estimator is uniformly bounded, independently of the bandwidth $h$.
As a result, we obtain in Theorem \ref{th: rate plus} convergence rates that improve upon those in \eqref{eq: rate intro}, expressed as
\[
\mathbb{E}[|\hat{\pi}_{h,T}(x) - \pi (x)|^2] \le 
\ar{c \left(\frac{1}{T}\right)^{ \min \left( \frac{2\TCR{\bar{\beta}}}{2\TCR{\bar{\beta}} + \alpha_{d,H}}, \frac{2 \TCR{\bar{\beta}} (1 - H)}{\TCR{\bar{\beta}} + d} - \varepsilon\right)}},
\]
\ar{where $\alpha_{d,H} := \max \left(2 d - \frac{1}{H}, \frac{4d}{5 - 2H}\right)$.}

Although these rates surpass those in existing literature, they remain slower than the convergence observed in classical SDEs with $H = \frac{1}{2}$. \TCR{Further discussion of this discrepancy is provided in \cref{rk: sde}.}

\TCR{After that, we turn to a more practically relevant framework in which only discrete observations of the process are available. We show in Theorem~\ref{th: rate start discrete} that the discrete version of our estimator attains the same convergence rate as in the continuous case, with \( T = n \Delta_n \), where \( n \) denotes the number of observations and \( \Delta_n \) the discretization step. Moreover, achieving this rate requires choosing the bandwidth in a rate-optimal way, which in turn depends on the (unknown) smoothness of the invariant density. This motivates the introduction of a data-driven procedure for bandwidth selection. In particular, we establish a concentration inequality (see Proposition~\ref{th: upper bounddiscrete} for the discrete case and Theorem~\ref{prop:concentrationdisccontinu} for the continuous one), which ensures that the estimator obtained via this adaptive procedure achieves the same rate as in the previous setting, as proved in Theorem~\ref{th: rate adaptive} and Corollary~\ref{cor: rate adaptive final}. The good performance of the estimator is further illustrated in Section \ref{section:NS} through numerical experiments for the fractional Ornstein–Uhlenbeck model in dimension 3.
}

Furthermore, \tcb{another key contribution of this paper is that all results are stated under relatively relaxed convexity assumptions}. Specifically, we do not require global strong convexity; instead, we assume that the drift is \TCR{contractive outside a compact set}. In the case where $b$ derives from a potential, i.e., $b = - \nabla V$ for some $V: \mathbb{R}^d \rightarrow \mathbb{R}$, this condition implies that $V$ is uniformly and strictly convex outside a compact set. Within the compact set, however, $V$ may contain multiple wells, allowing the drift to exhibit local repulsion.

\TCR{The paper is organized as follows. In \cref{s: model and main}, we introduce the assumptions on the drift, present the framework, and state the main results. \cref{section:NS} presents the numerical results and \cref{s: preliminary bounds} establishes several preliminary estimates that are essential for bounding the stochastic term via a martingale decomposition and for controlling the semigroup density, as developed in \cref{s: mtg decomposition}. The proofs of the main statistical results are given in \cref{s: proof stat}, while technical arguments are deferred to the Appendix. In particular, \cref{sec:martproof} contains the proof of the variance bound based on the martingale decomposition, \cref{s: CI} provides the proof of the concentration inequality, which is then used in \cref{s: proof adaptive} to establish the adaptive procedure. Further technical proofs are collected in \cref{s: tech}.
}

\vskip 12pt

{\textbf{Notations.}} {We mainly use standard notations: $|\cdot|$ denotes the Euclidean norm (note that the dimension may vary though), $\langle\cdot,\cdot\rangle$ is the Euclidean scalar product, and ${\cal P}({\cal X})$ is the set of Borel probability measures on a Polish space ${\cal X}$. The law of a random variable $X$ is abbreviated by ${\cal L}(X)=\mathbb{P}(X\in\cdot)$.}

{For $p \ge 1$ and $\mu,\,\nu\in{\cal P}({\cal X})$, the Wasserstein distance is defined by
\begin{equation} \label{wass}
\mathbb{W}_{{p}}(\mu, \nu) = {\Big(\inf \int d(x,y)^p \alpha(dx,dy) \Big)^\frac{1}{p}},
\end{equation}
where the infimum is taken over all the probability measures $\alpha$ on ${\cal X} \times {\cal X}$, with marginals $\mu$ and $\nu$. 
We denote by $\left \| \mu \right \|_{\rm TV} := \sup_{A} |\mu(A)|$ the total variation norm.}

The space of the \TCR{anisotropic} $\beta$-H{\"o}lder functions with related constants bounded by $L$ is denoted by $\mathcal{H}_d (\beta, L)$ (see \cref{def:holderspace} for a precise definition), \tcb{whereas $\Cloc^{H-}(\R_+,\R^d)$ denotes the space of functions $f:\ER_+\rightarrow\ER^d$ which are locally $\alpha$-H\"older for any $\alpha<H$.}

For $a\ge0$ and $b\ge0$, one writes $a \land b$ for $\min (a,b)$,  $a \lor b$ for $\max(a,b)$ and $a\lesssim b$ if there exists a (positive) constant $c$ such $a\le c b$. One also writes $a\lesssim_\varepsilon b$
when one needs to emphasize that the constant $c$ depends on $\varepsilon$. 

\section{Model assumptions and main results}{\label{s: model and main}}

In the context of the non-Markovian dynamics described by \eqref{eq:sde}, a proper definition of the invariant distribution for the process $X$ requires embedding the dynamics into an infinite-dimensional Markovian framework. Specifically, the resulting Markov process, referred to as the Stochastic Dynamical System, can be represented as a mapping on $\mathbb{R}^d \times \mathcal{W}$, where $\mathcal{W}$ denotes a suitable space of H\"older functions from $(-\infty, 0]$ to $\mathbb{R}^d$ equipped with the Wiener measure, as detailed below.

Before giving the full embedding, we state the assumption on the drift coefficient:
\begin{sk}\hypertarget{sk}{}
\item\label{hyp1} The function $b:\ER^d \rightarrow \ER^d$ is Lipschitz continuous, with
\[
[b]_{\text{Lip}} := \sup_{x \neq y} \frac{|b(x)-b(y)|}{|x-y|},
\]
and satisfies
\begin{equation}\label{eq:conditionb}
\langle b(x)-b(y), x-y \rangle \le  
\begin{cases}
-\kappa |x-y|^2, & |x|, |y| \geq R, \\
\lambda |x-y|^2, & \text{otherwise},
\end{cases}
\end{equation}
for some constants $\lambda, R \ge 0$ and $\kappa > 0$.
\end{sk}

This hypothesis is relatively weak compared with assumptions in previous works (e.g., \cite{adaptive} for the fBm case or \cite{DR, Strauch} for classical SDEs), where global contractivity is assumed, \TCR{i.e., \eqref{eq:conditionb} holds with $R=0$}. Note that $\lambda$ may be smaller than $[b]_{\text{Lip}}$, so its specification is not redundant.  
When $b = -\nabla V$ for a potential $V$, the condition requires $V$ to be at most $\lambda$-concave for $|x| \le R$ and $\kappa$-convex for $|x| > R$. If $V \in \mathcal{C}^2(\mathbb{R}^d)$, this translates to $\nabla^2 V \le \lambda$ and $\nabla^2 V \ge -\kappa$ on the respective domains. Thus, universal strong convexity (\TCR{i.e., $b = -\nabla V$ satisfies \eqref{eq:conditionb} with $R=0$}) is not required: our results remain valid when the drift is strongly convex only outside a compact set, allowing for mildly non-convex regions.

It is well-known that \tcb{either \eqref{eq:conditionb} or the Lipschitz continuity of $b$} ensures existence and uniqueness of the solution to \eqref{eq:sde} \TCR{(see \cref{prop:li_sieber})}.

As shown in \cite{Hairer}, the system (\ref{eq:sde})  can be endowed with a  Markovian structure as follows.
First consider the Mandelbrot\TCR{-Van Ness} representation of the fBm. Let $W=(W_t)_{t \in \R}$ be a two-sided Brownian motion on $\R^d$. Then, \TCR{a $d$-dimensional fBm $(B_t)_{t\ge0}$ with Hurst parameter $H \in (0,1)$ admits the following moving-average representation: 
\begin{equation} \label{Man}
B_t=c_1(H) \left(\int_{-\infty}^0 (t-s)^{H -\frac12}-(-s)^{H-\frac{1}{2}} dW_s+\int_0^t (t-s)^{H-\frac{1}{2}} ds \right), \quad t \ge 0,
\end{equation}
where $c_1(H)=\frac{\sqrt{\Gamma(2H+1) \sin(\pi H)}}{\Gamma(H+\frac12)}$.
}

\TCR{Then, taking advantage of this moving-average representation, \cite{Hairer} showed that the process $Z_t:=(X_t,(W_{s+t}-W_t)_{s\le 0})$ has a Feller Markov (homogeneous) structure when the Brownian motion is realized on an appropriate H\"older space ${\cal W}$. Precisely, denoting by $\mathcal{C}^{\infty}_0(\R_{-})$ the space of $\mathcal{C}^{\infty}$ functions $w:\R_{-}\rightarrow \R^d$, with compact support  satisfying $w(0)=0$, $\mathcal{W}$ is the (Polish) closure of $\mathcal{C}^{\infty}_0(\R_{-})$ for the norm
$$
\Vert w \Vert_{\mathcal{W}}=\sup_{s,t \in \R_{-}} \frac{\vert w(t)-w(s) \vert}{\vert t-s\vert^{\frac{1-H}{2}}(1+\vert t\vert+\vert s \vert)^{1/2} }.
$$}
\TCR{Now, let us denote by $(\mathcal{Q}_t(x,w))_{t \geq 0, (x,w) \in \R^d \times \mathcal{W}}$ the Feller Markov semigroup associated with $(Z_t)_{t\ge0}$ (see \cite[Lemma 2.12]{Hairer} for details).}  A probability measure $\nu_0$ on $\R^d \times \mathcal{W}$ is  called  a generalized initial condition if  the  projection on the second coordinate is $\mathbb{P}_{-}$ when $\mathbb{P}_{-}$ denotes the distribution on $\mathbb{R}_{-}$. 
A  generalized initial condition   $\Pi$ is said to be an invariant distribution for $(X_t)_{t \geq 0}$ if for every $t \geq 0$, $\mathcal{Q}_t \mesinv=\mesinv$. We say that uniqueness of the invariant
distribution holds if the stationary regime, that is, the distribution $\overline{\mathcal{Q}}\mesinv$ of the whole process $(X_t)_{t \geq 0}$ with initial (invariant) distribution $\mesinv$, is unique.
We  denote by $\overline{\mesinv}$ the  first marginal of $\mesinv$, that is, $$\overline{\mesinv}(dx)=\int_{\mathcal{W}} \mesinv(dx, dw).$$
Let $(\mathcal{F}_t)_{t \in \mathbb{R}}$ be the natural filtration associated to the two-sided Brownian motion $(W_t)_{t \in \R}$ induced by the Mandelbrot-Van Ness representation (see \eqref{Man}).

\begin{proposition}\label{prop:li_sieber}
Assume  \ref{hyp1}. Then: 
\begin{enumerate}
\item Existence and uniqueness hold for the invariant distribution $\mesinv$.
\item There exist $\fab{\Lambda}>0$ and some constant $c,C>0$ depending only on $\kappa$, $R$ and $[b]_{Lip}$, which are such that if $\lambda\in[0,\fab{\Lambda}]$, we have for any $t \geq 0$ and any
generalized initial condition $\nu_0$, 
$$
\Vert \mathcal{L}(X_{t}^{\nu_0}) -\overline{\mesinv} \Vert_{{\rm TV}} \leq  \TCR{C} e^{-ct} {{\mathbb W}_1}(\mesinv, \bar{\nu}_0),
$$
\TCR{where $\mathbb{W}_1$ is the Wasserstein distance defined by (\ref{wass})} and
$\bar{\nu}_0$ denotes the first marginal of  $\nu_0$.

\item The marginal distribution $\overline{\Pi}$ admits a density $\pi$ with respect to the  Lebesgue measure  on $\R^d$. Furthermore, $\pi$ has the following properties:
{
\begin{enumerate}
\item There are positive constants $c_1,c_2,C_1,C_2>0$ such that
  \begin{equation}\label{eq:gaussian_bounds}
    C_1 e^{-c_1|y|^2}\leq \pi(y) \leq C_2 e^{-c_2|y|^2} \quad\forall y\in\R^d.
  \end{equation}
\item If $b$ is ${\cal C}^k$ with bounded derivatives and if furthermore $D^k b$  is $\alpha$-H\"older with $\alpha>1-(2H)^{-1}$ when $H>1/2$, then  $\pi$ is ${\cal C}^k$ with bounded derivatives.
\end{enumerate}
}
\end{enumerate}
\end{proposition}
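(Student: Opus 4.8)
The plan is to derive all three items from the infinite-dimensional Markovian reformulation of \cite{Hairer} together with coupling estimates adapted to the semi-contractive drift \eqref{eq:conditionb}, and to invoke the small-time density estimates for the semigroup $(\mathcal{Q}_t)$ from \cite{LPS22} for the statements on $\pi$.

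\emph{Items 1 and 2.} First I would establish a geometric contraction of $\mathcal{Q}_t$ in $\mathbb{W}_1$ on $\R^d\times\mathcal{W}$. Given two generalized initial conditions, couple them by taking an optimal $\mathbb{W}_1$-coupling of the $\R^d$-marginals and the identity coupling of the $\mathcal{W}$-marginals (both equal to $\mathbb{P}_-$), then run the two copies $(X^x,X^y)$ of \eqref{eq:sde} with the \emph{same} two-sided Brownian motion $W$; since the past and the future noise coincide, both copies see the same realization of $B$, so $Z_t:=X^x_t-X^y_t$ solves the pathwise ODE $\dot Z_t=b(X^x_t)-b(X^y_t)$. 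By \eqref{eq:conditionb}, $\tfrac{d}{dt}|Z_t|^2\le -2\kappa|Z_t|^2$ when $|X^x_t|,|X^y_t|\ge R$ and $\le 2\lambda|Z_t|^2$ otherwise; for $\lambda\le\lambda_0$ small enough (depending only on $\kappa$, $R$, $[b]_{\mathrm{Lip}}$), a possibly concave modification of the distance $|x-y|$ together with a Lyapunov control of the excursions into $B(0,R)$ yields $\mathbb{E}|Z_t|\le Ce^{-ct}|x-y|$, hence $\mathbb{W}_1(\mathcal{Q}_t\nu_0,\mathcal{Q}_t\nu_0')\le Ce^{-ct}\,\mathbb{W}_1(\bar\nu_0,\bar\nu_0')$. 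Existence of $\Pi$ then follows from a Krylov--Bogolyubov/Cauchy-in-$\mathbb{W}_1$ argument using completeness, and uniqueness is immediate from the contraction, giving item 1. For item 2 I would upgrade $\mathbb{W}_1$-convergence to total variation by a one-step smoothing estimate: using the strong Feller property of $\mathcal{Q}_1$ from \cite{Hairer} (equivalently a Girsanov shift of the driving noise over a unit interval, controlled uniformly in $w$ outside a $\mathbb{P}_-$-null set of the past), one gets $\|\mathcal{Q}_1((x,w),\cdot)-\mathcal{Q}_1((y,w),\cdot)\|_{\mathrm{TV}}\lesssim |x-y|\wedge 1$; combining with the Markov property $\mathcal{L}(X^{\nu_0}_t)=\overline{\mathcal{Q}_1\mathcal{Q}_{t-1}\nu_0}$ and the $\mathbb{W}_1$-contraction of $\mathcal{Q}_{t-1}$ yields the claimed bound $e^{-ct}\,\mathbb{W}_1(\Pi,\bar\nu_0)$ (after renaming constants).

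\emph{Item 3.} Existence of $\pi$ and the regularity statement 3(b) follow from the small-time density estimates of \cite{LPS22}: conditionally on the past $\mathcal{F}_0$, the law of $X_t$ for $t>0$ admits a smooth density, and $\mathcal{C}^k$-regularity with bounded derivatives is inherited from the corresponding assumptions on $b$, the threshold $\alpha>1-(2H)^{-1}$ for $H>1/2$ being exactly what is required to control the Malliavin derivatives and the Jacobian flow of \eqref{eq:sde} in the rough regime. For the Gaussian bounds \eqref{eq:gaussian_bounds}, I would first prove a Gaussian tail bound $\overline{\Pi}(|X|\ge r)\le Ce^{-cr^2}$ for the stationary marginal: outside $B(0,R)$ the drift is $\kappa$-contractive, so a Lyapunov estimate gives $\mathbb{E}[e^{\theta|X_\infty|^2}]<\infty$ for small $\theta>0$. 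Combining this tail with the local boundedness and equicontinuity of $\pi$ afforded by the semigroup density bounds gives the upper bound $\pi(y)\le C_2e^{-c_2|y|^2}$ via a covering argument. The lower bound $\pi(y)\ge C_1e^{-c_1|y|^2}$ I would get from stationarity, $\pi(y)=\int_{\R^d\times\mathcal{W}}\mathcal{Q}_1((x,w),dy)\,\Pi(dx,dw)$, restricting the integral to $x\in B(0,R)$ and using the Gaussian-type lower bound on the one-step density from \cite{LPS22} (a quantitative irreducibility/support statement), which decays like $e^{-c_1|y|^2}$.

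\emph{Main obstacle.} The genuinely delicate point is the passage from Wasserstein to total variation in item 2, and the correlated task of controlling the one-step smoothing uniformly in the past $w\in\mathcal{W}$: in the non-Markovian fBm setting there is no generator, and the conditioning on the past enters both the drift evolution and the Girsanov density, so the short-time estimate has to be made quantitative in a way that remains integrable against $\mathbb{P}_-$. This, together with the two-sided semigroup density bounds (where \cite{LPS22} does the heavy lifting, via Malliavin calculus and a careful analysis of the conditional law, with genuinely different behaviour for $H<1/2$ and $H>1/2$), is where essentially all the work sits; the coupling/contraction argument and the Lyapunov tail bound are comparatively routine.
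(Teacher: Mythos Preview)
The paper does not actually prove this proposition: immediately after the statement it simply attributes Item~1 to \cite[Theorem 1]{Hairer}, Item~2 to \cite[Theorem 1.3]{LS22a}, and Item~3 to \cite[Theorem 1.1]{LPS22}, with no further argument. Your proposal therefore goes well beyond what the paper itself provides.

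That said, your sketch is broadly consistent with how the cited results are obtained. A few points of comparison. For Item~1, Hairer's original argument is not via a $\mathbb{W}_1$-contraction but via a Feller/tightness construction for existence and a coupling argument for uniqueness; the synchronous-coupling $\mathbb{W}_1$-contraction you describe (with control of the time spent outside $B(0,R)$) is closer to the approach of \cite{LS22a}, which the paper invokes for Item~2 and partially reproduces later in \cref{lem:li_sieber} and \cref{lem:stability_innovation}. For Item~2, your two-step scheme---$\mathbb{W}_1$-contraction followed by a one-step TV-smoothing via Girsanov---is indeed the strategy of \cite{LS22a}; the obstacle you single out (making the Girsanov cost integrable over the past) is precisely the technical core there, and is also the content of \cref{sec:TVbounds} in the present paper. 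For Item~3, both the Gaussian two-sided bounds and the $\mathcal{C}^k$-regularity are established in \cite{LPS22} essentially along the lines you indicate (conditional density bounds for $\Phi_t(\ell)$ as in \eqref{eq:innovation_sde}, then integration against $\Pi$), and the threshold $\alpha>1-(2H)^{-1}$ when $H>1/2$ is exactly what is needed for the flow estimates in the rough regime.

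In short: your proposal is correct in spirit and considerably more detailed than the paper's own treatment, but for the purposes of this paper the proposition is a black-box import from the three cited references rather than something re-proved here.
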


The  first point of the above theorem follows from \cite[Theorem 1]{Hairer}. The second one comes from \cite[Theorem 1.3]{LS22a}. Finally, the last statement is a consequence of \cite[Theorem 1.1]{LPS22}. In the sequel, the notation $\pi$ will also be used for the marginal distribution.

\begin{remark}{\label{rk: beginning}} 
The exponential convergence holds if $\lambda_0$ is small enough. {As mentioned before, it is always true when $b$ is the gradient of a  convex ${\cal C}^2$-function $V$ which is uniformly strongly convex outside a compact set. The fact that $\lambda_0$ may be positive means that it can extend to ``nice'' non-convex settings. In the general non-convex case, $i.e.$ without constraints on $\lambda_0$}, \cite[Theorem 1]{Hairer} yields convergence to equilibrium but with a fractional rate of the order $t^{-\gamma}$ with $\gamma\in(0, \max_{\alpha<H} \alpha(1-2\alpha))$.
\end{remark}

We aim at estimating the invariant density $\pi$ associated to the stationary process $X$ solution to \eqref{eq:sde} via the kernel estimator and assuming that the continuous record $(X_t)_{t \in [0, T]}$ is available. We also assume that the density $\pi$ we want to estimate belongs to the \TCR{anisotropic} H{\"o}lder class $\mathcal{H}_d (\beta, L)$ defined as follows.
\begin{definition}\label{def:holderspace}
Let \TCR{$\beta = (\beta_1, ... , \beta_d)$ and ${L} = ({L}_1, ... , {L}_d)$, with $ \beta_i \ge 1$ and $L_i > 0$, for any $i \in \{1, ... , d \}$.} A function $g : \mathbb{R}^d \rightarrow \mathbb{R}$ is said to belong to the \TCR{anisotropic} H{\"o}lder class of functions $\mathcal{H}_d (\beta, L)$ if, for all $i \in \{1,\ldots,d\}$, $k = 0,1,\ldots, \lfloor \TCR{\beta_i} \rfloor$ and $t \in \mathbb{R}$,
\begin{equation*} 
\left \| D^{(k)}_i g \right \|_\infty \le \TCR{L_i}  \quad \text{ and } \quad \left \| D_i^{(\lfloor \TCR{\beta_i} \rfloor)}g(. + te_i) - D_i^{(\lfloor \TCR{\beta_i} \rfloor)}g(.) \right \|_\infty \le L |t|^{\TCR{\beta_i - \lfloor \beta_i \rfloor}},
\end{equation*}
where $D^{(k)}_i$ denotes the $k$th order partial derivative of $g$ w.r.t. the $i$th  component, 
$\lfloor \TCR{\beta_i} \rfloor$ is the integer part of $\TCR{\beta_i}$, and $e_1,\ldots,e_d$ is the canonical  basis of $\R^d$.
\end{definition}

 In  words, a function $g$ belongs to the class $\mathcal{H}_d (\beta, L)$ if all the partial derivatives of $g$ up to order $\lfloor \TCR{\beta_i} \rfloor$ are bounded and the 
 $\lfloor \TCR{\beta_i} \rfloor$th partial derivative is H\"older continuous of order $\TCR{\beta_i-\lfloor \beta_i \rfloor}$.
\begin{remark}
   As shown \TCR{in point 3.b) of} \cref{prop:li_sieber}, the smoothness of $\pi$ is directly determined by that of $b$. Moreover, a careful inspection of the proof of \cite[Theorem 1.1]{LPS22} shows that this result extends to the H\"older setting.
\end{remark}
    
 In our context, it is natural to assume that the invariant density belongs to a H\"older class as described above. Further examples of nonparametric estimation over H\"older classes can be found, for instance, in \cite{Chapitre4, Hop_Hof, Lep_Spo, Tsy}.

To achieve \TCR{our goal of estimating the invariant density}, we introduce the kernel density estimator, based on the kernel function $K: \mathbb{R} \rightarrow \mathbb{R}$ satisfying 
\begin{equation*}
\int_\mathbb{R} K(x) dx = 1, \quad \left \| K \right \|_\infty < \infty, \quad \mbox{supp}(K) \subset [-1, 1], \quad \int_\mathbb{R} K(x) x^i dx = 0,
\label{eq: properties K}
\end{equation*}
for all $i \in \left \{ 1, ... , M-1 \right \}$ with $M \ge \TCR{\max_i \beta_i}$. Such functions are referred to as kernel functions of order $M$. For a multi-index $\TCR{h = (h_1, ..., h_d)}$ which denotes the bandwidth, \TCR{with $h_l > 0$ for any $l \in \{1, ..., d \}$}, let $\mathbb{K}_h$ be defined by:
$$\mathbb{K}_h(z)=\frac{1}{\TCR{\prod_{l = 1}^d h_l}} \prod_{i=1}^d K\left(\frac{z_i}{\TCR{h_i}}\right), \quad z\in\ER^d.$$

The kernel density estimator of $\pi$ at $x= (x_1, ... , x_d) \in \mathbb{R}^d$ is given by \begin{equation}
\hat{\pi}_{h,T}(x) = \frac{1}{T} \int_0^T \mathbb{K}_h(x - X_u) du,
\label{eq: def estimator}
\end{equation}
for a given small bandwidth $h$. In particular, we assume $\TCR{h_l} < 1$ for any $\TCR{l \in \{1, ... , d \}}$. 

Kernel estimators have proven to be powerful tools in a variety of settings. For instance, \cite{Ban, Bosq9} use them to estimate the marginal density of a continuous-time process, while \cite{Chapitre4, Optimal} apply them in a jump-diffusion context, with \cite{Optimal} specifically addressing volatility estimation. More generally, as noted in the introduction, the literature on using kernel estimators to study convergence rates for invariant density estimation in diffusion processes is extensive; see, e.g., \cite{Minimax, ComMar, DR, Strauch}.

The suitability of kernel estimators for invariant density estimation is closely linked to ergodicity. Under appropriate conditions, the empirical measure $T^{-1}\int_0^T \delta_{X_u} \, du$ converges to $\pi$ as $T \to \infty$, implying that $\hat{\pi}_{h,T}(x)$ converges almost surely to $\E[\mathbb{K}_h(x - X_0)] = (\mathbb{K}_h \ast \pi)(x)$, which in turn converges to $\pi(x)$ as $h \to 0$. Our goal is to quantify these convergence properties under the assumptions of our framework.


 Propositions \ref{th: upper bound} and \ref{th:bound2} provide upper bounds for the stochastic component of the estimator $\hat{\pi}_{h,T}$. These bounds are then employed in Theorems \ref{th: rate start} and \ref{th: rate plus}, which constitute our main statistical results, establishing the convergence rates for the estimation of $\pi$.




\begin{proposition}\label{th: upper bound}
There exists $\Lambda>0$ such that if  \ref{hyp1} holds with $\lambda\le \Lambda$, then, for all $\varepsilon>0$ sufficiently small, there exist positive constants $c$ and $c_\varepsilon$ such that \TCR{for all $x\in\ER^d$,}
\begin{equation*}
{\rm Var}(\hat{\pi}_{h,T}(x)) \le \begin{cases} {c}{T^{-1}{(\TCR{\prod_{l = 1}^d h_l})}^{-2}} & \textnormal{if $H<1/2$},\\
\\
 {c_\varepsilon}{T^{2H-2+\varepsilon} {(\TCR{\prod_{l = 1}^d h_l})}^{-2}} &\textnormal{if $H>1/2$}.
\end{cases}
\end{equation*}
\end{proposition}
\fab{\cref{th: upper bound} is a direct consequence of \cref{prop: mixing} (see \cref{s: mtg decomposition}) with \( F = \mathbb{K}_h \). We refer the reader to \cref{rem:optimality} for a discussion of optimality. 
We now state our first main result, whose proof is postponed to \cref{sec:prooftheo2}. 
Recall that our kernel estimator is assumed to be of order $M \ge \max_i \beta_i$; otherwise all our theorems below remain valid after replacing $\beta_l$ by $\beta_l \wedge M$.}


\begin{theorem} \label{th: rate start}
\ar{Assume that \(\pi\) belongs to the anisotropic H\"older class \(\mathcal{H}_d(\beta, L)\).} Then there exists $\Lambda>0$ such that, if \ref{hyp1} holds with $\lambda\le \Lambda$, then, for all  $\varepsilon>0$ sufficiently small, there \TCR{exist} positive constants $c$ and $c_\varepsilon$ such that \TCR{for all $x \in \R^d$,}
\begin{equation*}
\mathbb{E}[|\hat{\pi}_{h,T}(x) - \pi (x)|^2] \le 
\begin{cases} {c}\left({(\TCR{\prod_{l = 1}^d h_l})}^{-2}T^{-1} \TCR{+} \TCR{\sum_{l = 1}^d h_l^{2\beta_l}}\right) & \textnormal{if $H<1/2$},\\
\\
c_\varepsilon \left({(\TCR{\prod_{l = 1}^d h_l})}^{-2}T^{2H-2+\varepsilon} \TCR{+} \TCR{\sum_{l = 1}^d h_l^{2\beta_l}}\right) &\textnormal{if $H>1/2$}.
\end{cases}
\end{equation*}
Thus, the rate optimal\footnote{\fab{By optimal, we mean the choice which minimizes the error in terms of $T$ (see \cref{sec:prooftheo2} for details)}.} choices \TCR{$h_l(T)=T^{-\frac{\bar{\beta}}{2\beta_l(\bar{\beta}+d)}}$ for $H<1/2$ and $h_l(T)=T^{-\frac{\bar{\beta}(1-H)}{\beta_l(\bar{\beta}+d)} -\varepsilon}$ for $H>1/2$, for any $l \in \{1, ... , d \}$,} lead us to
\begin{equation*}
\mathbb{E}[|\hat{\pi}_{h,T}(x) - \pi (x)|^2] \le 
\begin{cases} c(\frac{1}{T})^{\frac{\TCR{\bar{\beta}}}{\TCR{\bar{\beta}} + d}} & \textnormal{if $H<1/2$},\\
\\
c_\varepsilon (\frac{1}{T})^{\frac{2(1 - H)\TCR{\bar{\beta}}}{\TCR{\bar{\beta}} + d} - \varepsilon}  &\textnormal{if $H>1/2$},
\end{cases}
\end{equation*}
\TCR{where we recall that $\bar{\beta}$ denotes the harmonic mean of the smoothness parameters $\beta_1, ... , \beta_d$.}

\end{theorem}
\begin{remark}
A reader familiar with fractional Brownian motion might find it noteworthy that, contrary to the conventional expectation that cases with $H < \frac{1}{2}$ are particularly challenging, we obtain a better convergence rate for smaller $H$. \fab{This is primarily because, in this additive setting, the error induced by long-range dependence (which increases with $H$) has a greater impact than the roughness of the paths (which decreases with $H$).}
\end{remark}

\begin{remark}{\label{rk: cont}}
It is worth noting that, in the result above, the two limits as $H$ approaches $\frac{1}{2}$ from the left and the right coincide. However, as discussed in the introduction, for classical SDEs (where $H = \frac{1}{2}$), a faster convergence rate can be achieved. Further details are provided in Remark \ref{rk: sde}.  \end{remark}

\begin{remark} \label{rem:lb}
\TCR{In the parametric framework, consider the one-dimensional fractional Ornstein--Uhlenbeck (fOU) process $dX_t = dB_t - \theta X_t\,dt$ where $\theta > 0.$
Then, \cite{Bercu2011} shows that the maximum likelihood estimator (MLE) achieves the squared parametric rate \( T^{-1} \) for \( H > 1/2 \), and claims without proof that this rate also holds for \( H < 1/2 \). In the latter case, the least squares estimator attains the squared parametric rate \( T^{-1} \), as proven in \cite{HNZ}.
 More generally, when $H>1/2$, \cite{LNT19} shows that the MLE is asymptotically efficient in the sense of the Minimax Theorem (\cite[Theorem 1.6]{LNT19}) with the same rate.  
 In our non-parametric and multidimensional framework,  consider the isotropic fOU process, which is known to have an invariant Gaussian density for all $H \in (0,1)$. Hence, the density is infinitely smooth ($\beta = \infty$). In this case, our nonparametric estimator operates in the variance-dominated regime: choosing the bandwidth as $h_{l} = T^{-\delta}$ with an arbitrarily small $\delta > 0$ in Theorem \ref{th: rate start} makes the bias negligible, so the mean squared error (MSE) scales as 
\begin{equation*}
\mathrm{MSE}(x)\;\lesssim\;
\begin{cases}
T^{-1+2d\delta}, & H<\tfrac12,\\[2pt]
T^{\,2H-2+2d\delta+\epsilon}, & H>\tfrac12,
\end{cases}
\end{equation*}
up to arbitrarily small $\epsilon, \delta>0$. 
In general, a standard approach to prove minimax lower bounds in the nonparametric setting (see, e.g., \cite[Section~2]{Tsy}) relies on constructing two invariant densities $\pi_0$ and $\pi_1$ satisfying: (i) membership in the considered H\"older class, (ii) separation at some point $x^\ast$, and (iii) mutual absolute continuity of the corresponding stationary laws $\mathbb{P}_0$ and $\mathbb{P}_1$ of the processes
\[
dX_t = b_0(X_t)dt + \sigma dB_t
\quad \text{and} \quad
dX_t = b_1(X_t)dt + \sigma dB_t.
\]
In the standard Brownian case ($H=1/2$), condition (iii) follows from Girsanov's theorem by controlling the $L^2$ distance between $b_0$ and $b_1$. In the case that $\beta=\infty$, choosing $b_1(x)=b_0(x)+\tau^{-1/2} \delta(x)$, where $\delta$ is a smooth and bounded function with bounded derivatives, Girsanov's theorem for fBm (see \cite[Section 3]{LPS22} and \cite[Section 2.1.1]{LNT19}) suggests that if 
\[
\tau^2 \asymp
\begin{cases}
T, & H<\tfrac12,\\
T^{\,2-2H}, & H>\tfrac12,
\end{cases}
\]
then condition (iii) will hold. This suggests that, in this case, the optimal squared rate is $T^{2-2H}$ when $H>1/2$ and
$T^{-1}$ when $H<1/2$. However, the lack of an explicit correspondence $b\mapsto\pi_b$ implies that proving (ii) would require new analytical tools.
This correspondence is explicit in the case $H=1/2$ via the generator of the diffusion process and allows to obtain minimax lower bounds as shown for instance in \cite{Minimax, Optimal}. In particular, the optimal squared rate in the isotropic case is $T^{-1}$ when $d=1$ (as in the parametric case), $\frac{\log(T)}{T}$ when $d=2$, and $T^{-\frac{2\beta}{2\beta+d-2}}$ when $d \geq 3$. Extending those lower bounds to the fractional case remains an open problem, even for the fOU case.} \end{remark}

It is natural to consider how the convergence rates above compare with those in \cite{adaptive}, where the authors study a similar problem: estimating the density of an fBm in an anisotropic setting from discrete observations under more restrictive assumptions on the drift. 

For a clear comparison, we consider the isotropic case, where the smoothness is uniform across all directions. In this scenario, the methodology in \cite{adaptive} yields a convergence rate of $T^{- (2 - \max(2H, 1)) \frac{\beta}{2(1 + \beta) + 2d}}$, whereas the rate in \cref{th: rate start} can be expressed as $T^{- (2 - \max(2H, 1)) \frac{\beta}{\beta+d}}$. Since $2(1 + \beta + d) > \beta + d$, our rate clearly improves upon that in \cite{adaptive}.
The key to this improvement is replacing the Lipschitz norm in the martingale decomposition of \cite{adaptive} with the supremum norm (see \cref{prop: mixing}), which is more suitable for our setting. Notably, $\left \| \mathbb{K}_h \right \|_{Lip}$ scales like $h^{-1} \left \| \mathbb{K}_h \right \|_{\infty}$.

Moreover, for $H<1/2$, the convergence rate can be further refined using an additional bound on the density. This refinement exploits the fact that $\left \| \mathbb{K}_h \right \|_1 = 1$, whereas $\left \| \mathbb{K}_h \right \|_{\infty} \sim \TCR{(\prod_{l = 1}^d h_l)^{-1}}$. Techniques of this type, common in classical SDEs (see, e.g., \cite{asynch, DR, Strauch}), have been adapted to the non-Markovian setting, with recent developments in \cite{LPS22} and further refinements in Lemma \ref{lem:espcondkh}. These allow for an improved bound on the stochastic term for $H < \frac{1}{2}$ (\fab{see \cref{sec:prooftheo33} for the proof of the following result}):

\begin{proposition}\label{th:bound2}
Let $H<1/2$. There exists $\Lambda>0$ such that if  \ref{hyp1} holds with $\lambda\le \Lambda$, then, for all $\varepsilon>0$ sufficiently small, a  positive constant $c_\varepsilon$ exists such that \TCR{for all $x \in \R^d$},
\begin{equation*}
{\rm Var}(\hat{\pi}_{h,T}(x)) \le \TCR{\frac{(\prod_{l = 1}^d h_l)^{-2}}{T} 
\max\left(c\, (\prod_{l = 1}^d h_l)^{\frac{1}{dH}}, c\, (\prod_{l = 1}^d h_l)^{\frac{2(3-2H)}{5-2H}}, c_\varepsilon\, {T^{2H-1+\varepsilon}}\right) }.
\end{equation*}
\end{proposition}
Here, we choose to include the factor $\TCR{(\prod_{l = 1}^d h_l)^{-2}} T^{-1}$ to facilitate comparison with the previous bound. {In particular, up to constants $c$ and $c_\varepsilon$, this bound is always more favorable than the previous one for any $H < 1/2$ and $\varepsilon < 1 - 2H$.}

\ar{By replacing the bound in \cref{th: upper bound} with that in \cref{th:bound2}, we obtain the following convergence rate. Recall from the introduction that $\alpha_{d,H} = 2 d - \frac{1}{H} \lor \frac{4d}{5 - 2H}$.}

\begin{theorem}\label{th: rate plus}
Let $H<1/2$ \ar{and $\pi \in \mathcal{H}_d(\beta, L)$.} There exists $\Lambda>0$ such that if  \ref{hyp1} holds with $\lambda\le \Lambda$, then, for all $\varepsilon>0$ sufficiently small, a  positive constant $c_\varepsilon$ exists such that \TCR{for all $x \in \R^d$},
\begin{equation*}
\mathbb{E}[|\hat{\pi}_{h(T),T}(x) - \pi (x)|^2] \le \TCR{c_\varepsilon \left(\frac{1}{T}\right)^{\frac{2\bar{\beta}}{2\bar{\beta} + \alpha_{d,H}} \land \frac{2 \bar{\beta} (1 - H)}{\bar{\beta} + d} - \varepsilon}},
\end{equation*}
\tcr{
where $\alpha_{d,H}=(2d -  \frac{1}{H}) \lor \frac{4d}{5 - 2H}$ and \TCR{$h_l(T)=T^{-a_l}$ with \(a_l := \frac{\bar{\beta}}{\beta_l} \min\left(\frac{1}{2\bar{\beta} + \alpha_{d,H}}, \frac{1 - H - \varepsilon}{\bar{\beta} + d}\right)\).}}
\end{theorem}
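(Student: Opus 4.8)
The plan is to combine the usual bias--variance decomposition with the variance estimate of Theorem~\ref{th:bound2} and then to optimise the bandwidth. First I would write
\[
\mathbb{E}[|\hat{\pi}_{h,T}(x) - \pi(x)|^2] = \mathrm{Var}(\hat{\pi}_{h,T}(x)) + \big|\mathbb{E}[\hat{\pi}_{h,T}(x)] - \pi(x)\big|^2,
\]
and use that, since $X$ is stationary with marginal $\pi$, one has $\mathbb{E}[\hat{\pi}_{h,T}(x)] = (\mathbb{K}_h \ast \pi)(x)$. For the bias term, since $\pi \in \mathcal{H}_d(\beta,\mathcal{L})$ and $\mathbb{K}_h$ is a product of one-dimensional kernels of order $M \ge \beta$ satisfying \eqref{eq: properties K}, the standard Taylor expansion together with the vanishing-moment conditions (applied coordinate by coordinate) gives a constant $c>0$ depending only on $d$, $\beta$, $\mathcal{L}$ and $\|K\|_\infty$ with $|(\mathbb{K}_h \ast \pi)(x) - \pi(x)| \le c\,h^{\beta}$ for all $h<1$, uniformly in $x$; hence the squared bias is $\lesssim h^{2\beta}$.

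Next I would rewrite Theorem~\ref{th:bound2} in a form convenient for optimisation. Since $h<1$, the factors appearing there satisfy $h^{-2d}h^{1/H} = h^{-(2d-1/H)}$ and $h^{-2d} h^{2d(3-2H)/(5-2H)} = h^{-4d/(5-2H)}$, and both are bounded by $h^{-\alpha_{d,H}}$ with $\alpha_{d,H} = (2d - \tfrac1H) \lor \tfrac{4d}{5-2H}$. Thus Theorem~\ref{th:bound2} yields, for every small $\varepsilon>0$,
\[
\mathrm{Var}(\hat{\pi}_{h,T}(x)) \;\lesssim_\varepsilon\; \frac{h^{-\alpha_{d,H}}}{T} \;+\; h^{-2d}\,T^{2H-2+\varepsilon},
\]
and, combining with the bias bound,
\[
\mathbb{E}[|\hat{\pi}_{h,T}(x) - \pi(x)|^2] \;\lesssim_\varepsilon\; h^{2\beta} + \frac{h^{-\alpha_{d,H}}}{T} + h^{-2d}\,T^{2H-2+\varepsilon}.
\]

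It then remains to substitute $h = h(T) = T^{-a}$ with $a = \min\big(\tfrac{1}{2\beta+\alpha_{d,H}}, \tfrac{1-H-\varepsilon}{\beta+d}\big)$ and to check that each of the three terms is at most $T^{-(\rho-\varepsilon')}$, where $\rho := \tfrac{2\beta}{2\beta+\alpha_{d,H}} \land \tfrac{2\beta(1-H)}{\beta+d}$ and $\varepsilon'\to 0$ as $\varepsilon\to 0$. Indeed $h^{2\beta} = T^{-2\beta a}$ with $2\beta a \ge \rho - \varepsilon'$ by the choice of $a$; $h^{-\alpha_{d,H}}T^{-1} = T^{\alpha_{d,H}a-1}$ is controlled because $a \le \tfrac{1}{2\beta+\alpha_{d,H}}$ forces $\alpha_{d,H}a - 1 \le -2\beta a \le -(\rho-\varepsilon')$; and $h^{-2d}T^{2H-2+\varepsilon} = T^{2da+2H-2+\varepsilon}$ is controlled because $a \le \tfrac{1-H-\varepsilon}{\beta+d}$ gives $(2\beta+2d)a \le 2-2H-\varepsilon$, hence $2da+2H-2+\varepsilon \le -2\beta a$. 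A short case split on which of the two quantities realises the minimum defining $a$ shows the exponent is exactly $\rho$ up to the $O(\varepsilon)$ loss, and relabelling $\varepsilon$ concludes.

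The substantive input here is the variance bound of Theorem~\ref{th:bound2}, which rests on the semigroup density estimates of \cite{LPS22} and Lemma~\ref{lem:espcondkh}; that is assumed at this point. Consequently the only genuine work in this proof is the arithmetic of the bandwidth optimisation, namely arranging a single choice of $h$ that simultaneously balances the bias against \emph{both} variance regimes, and carefully tracking the various $\varepsilon$'s — this bookkeeping, rather than any conceptual difficulty, is what I expect to be the main (mild) obstacle.
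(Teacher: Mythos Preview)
Your proposal is correct and follows essentially the same route as the paper: bias--variance decomposition, the bias bound $\lesssim h^{2\beta}$ from the Hölder assumption (this is the paper's Proposition~\ref{prop: bias}), the variance bound of Theorem~\ref{th:bound2}, and then the bandwidth optimisation with $h=T^{-a}$. The only cosmetic difference is that you replace the $\max$ in the variance bound by a sum and absorb the first two terms into a single $h^{-\alpha_{d,H}}/T$, while the paper keeps the $\min/\max$ structure and phrases the optimisation as locating $a_0=\min(a_1,a_2)$; the arithmetic and conclusion are the same.
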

\fab{The proof of \cref{th: rate plus} is achieved in \cref{sec:prooftheo44}.}
\begin{remark}
{The results derived here crucially depend on the interaction between \(\bar{\beta}\), \(d\), and \(H\). Notably, when \(H\) is close to \(\frac{1}{2}\), we find that \(\frac{2 \bar{\beta}}{2 \bar{\beta} + \alpha_{d,H}} = \frac{\bar{\beta}}{\bar{\beta} + d - 1}\), which is always negligible compared to the second term in the minimum above, leading to the same rate as in \cref{th: rate start}. Conversely, for small \(H\), the first term in the definition of \(\alpha_{d,H}\) becomes irrelevant. Particularly, when \(H\) is near \(0\), the convergence rates obtained in \cref{th: rate plus} improve to \((\frac{1}{T})^{\frac{5 \bar{\beta}}{5 \bar{\beta} + 2d} \land \frac{2 \bar{\beta}}{\bar{\beta} + d } - \varepsilon}\), which is clearly significantly better than the rate in \cref{th: rate start}.}
\end{remark}

    

\begin{remark}{\label{rk: sde}}
\fab{Even in the refinement stated above, the convergence rate we obtained is slower compared to that of classical SDEs. This is mainly due to the decay of the covariance, which is exponential for classical SDEs but only {fractional in this setting} (on this topic, see also \cref{rem:optimality}). To control the decay of the memory, one important point of the proof is to show that the $TV$-distance between two solutions starting from initial conditions $(x,w)$ and $(y,\tilde{w})$ decreases like $t^{H-1+\varepsilon}$. This in turn implies that the $TV$-distance between the distribution of $X_{s+t}$ conditionally to the past before $X_s$ and the invariant distribution decreases like $t^{H-1+\varepsilon}$. This property may appear surprising because of the result we recalled in \cref{prop:li_sieber}, giving exponential convergence to equilibrium (see also \cite{PanRic}). Unfortunately, the lack of Markovian property induces that the convergence to equilibrium of distributions and conditional distributions may be completely different. With simpler terms, one may have exponential ergodicity in a setting where the covariance decreases with a fractional rate.}
\end{remark}

\subsection{Discrete observations and adaptive procedure}{\label{s:adaptive}}

\TCR{In the previous section, we stated our results in the continuous-time setting, without taking into account that the smoothness parameter \( \beta \) of the invariant density is unknown. In this section, we aim to derive more realistic results based on discrete-time observations and to address the adaptive procedure in the spirit of~\cite{GL}.
} 

\TCR{
We start by introducing the observation scheme. We assume that we have $n$ discrete observations of the process $X$ collected at equally spaced sampling times 
$
0 = t_0 < t_1 < \cdots < t_n = T,
$
where the discretization step $\Delta_n$ is a non-increasing positive sequence such that $n \Delta_n = T \to \infty$. Note that $\Delta_n$ depends on $n$, but we will simply write $\Delta$ to lighten the notation. We then define the estimator based on the discrete observations of the process as
\begin{equation} \label{pi_dicrete}
\check{\pi}_{h,n}(x) = \frac{1}{n} \sum_{i=0}^{n-1} \mathbb{K}_h(x - X_{i\Delta}),
\end{equation}
which is the discrete analogue of~\eqref{eq: def estimator}. We begin by stating a discrete-time counterpart of \cref{th: upper bound}, where we also provide a concentration inequality that will be instrumental for the adaptive procedure.
\begin{proposition} \label{th: upper bounddiscrete}
Assume that $\Delta\in[1/n,1]$. Then, there exists $\Lambda>0$ such that if  \ref{hyp1} holds with $\lambda\le \Lambda$, then, for all  $\varepsilon>0$ sufficiently small, there exist positive constants $c$ and $c_\varepsilon$ such that for all $x\in\ER^d$,
\begin{equation}\label{eq:MSEdiscr}
\mathrm{Var}(\check{\pi}_{h,n}(x)) \le 
\begin{cases}
c (n\Delta)^{-1} (\prod_{l = 1}^d h_l)^{-2} & \textnormal{if } H<\tfrac{1}{2},\\ 
c_\varepsilon (n\Delta)^{2H-2+\varepsilon} (\prod_{l = 1}^d h_l)^{-2} & \textnormal{if } H>\tfrac{1}{2}.
\end{cases}
\end{equation}
Furthermore, the following concentration inequality holds for all $r>0$
$$\PE\left(\left|\check{\pi}_{h,n}(x)-\E[\check{\pi}_{h,n}(x)])\right|>r\right)\le \exp\left(-\frac{r^2 (n\Delta)^{1\wedge (2-2H+\varepsilon)}}{C_\varepsilon \left \| \mathbb{K}_h \right \|^2_\infty} \right),$$
for some positive constant $C_{\epsilon}$.
\end{proposition}}

\fab{The concentration inequality of Proposition~\ref{th: upper bounddiscrete} is stated together with its continuous-time counterpart in \cref{prop:concentrationdisccontinu}. Both cases are proved at the same time introducing a reference measure (see \eqref{eq:generalfuncrefmes}) which can be the Lebesgue measure in the continuous-time setting and a counting one in the discrete-time setting. This reference measure can also be used to deduce \eqref{eq:MSEdiscr} from an adaptation of  the proof of \cref{th: rate plus}. The details of the proof of \eqref{eq:MSEdiscr}  are left to the reader.
\begin{theorem} \label{th: rate start discrete}
Assume that \(\pi\) belongs to the H\"older class \(\mathcal{H}_d(\beta, L)\) and that $\Delta\in[1/n,1]$. Then, there exists $\Lambda>0$ such that, if \ref{hyp1} holds with $\lambda\le \Lambda$, then, for all  $\varepsilon>0$ sufficiently small, there exist positive constants $c$ and $c_\varepsilon$ such that for all $x \in \R^d$,
\begin{equation}
\mathbb{E}[|\check{\pi}_{h,n}(x) - \pi (x)|^2] \le 
\begin{cases} {c}\left((\prod_{l = 1}^d h_l)^{-2}(n \Delta)^{-1} {+} \sum_{l = 1}^d h_l^{2\beta_l}\right) & \textnormal{if $H<1/2$},\\
\\
c_\varepsilon \left((\prod_{l = 1}^d h_l)^{-2}(n \Delta)^{2H-2+\varepsilon} {+} \sum_{l = 1}^d h_l^{2\beta_l}\right) &\textnormal{if $H>1/2$}.
\end{cases}
\end{equation}
Thus, the rate optimal choices $h_l=(n \Delta)^{-\frac{\bar{\beta}}{2 \beta_l(\bar{\beta}+d)}}$ for $H<1/2$ and $h_l=(n \Delta)^{-\frac{\bar{\beta}(1-H)}{\beta_l(\bar{\beta}+d)} -\varepsilon}$ for $H>1/2$, for any $l \in \{1, ... , d \}$, lead us to
\begin{equation*}
\mathbb{E}[|\check{\pi}_{h,n}(x) - \pi (x)|^2] \le 
\begin{cases} c(\frac{1}{n \Delta})^{\frac{\bar{\beta}}{\bar{\beta} + d}} & \textnormal{if $H<1/2$},\\
\\
c_\varepsilon (\frac{1}{n \Delta})^{\frac{2(1 - H)\bar{\beta}}{\bar{\beta} + d} - \varepsilon}  &\textnormal{if $H>1/2$}.
\end{cases}
\end{equation*}
\end{theorem}}

\TCR{
It is clear from the results above that the variance, and therefore the rate-optimal choice of bandwidth, depend on the (unknown) smoothness of the invariant density. This motivates the introduction of a data-driven bandwidth selection procedure. With this goal in mind, we define a quantity that heuristically represents the bias term, together with a penalty term whose order matches that of the variance. Following the Goldenshluger–Lepski approach, the selected bandwidth is the one that minimizes the sum of these two quantities.}

\TCR{
To formalize this reasoning, let us introduce the set of candidate bandwidths given by
$$
\mathcal{H}_n \subset \left\{ h = (h_1, \dots, h_d) \in (0, 1]^d : \prod_{l = 1}^d h_l \le \left( \frac{1}{\log (n \Delta)} \right)^{\frac{1}{4} + a}, \, a > 0 \right\}.
$$
We assume that the growth of $|\mathcal{H}_n|$ is at most polynomial in $T = n \Delta$, i.e., there exists a constant $c > 0$ such that
\begin{equation}{\label{eq: Hn pol}}
 |\mathcal{H}_n| \le c (n \Delta)^c.   
\end{equation}
Such an assumption is classical in the analysis of adaptive procedures; see, for instance, Section~3.2 in \cite{Minimax}.  
An example of a set satisfying this condition is
\begin{equation}\label{eq: bandwidths pol}
\mathcal{H}_n := \left\{ h = (h_1, \dots, h_d) \in (0, 1]^d : \forall i \in \{1, \dots, d\}, \, h_i = \frac{1}{z_i}, \, z_i \in \mathbb{N}:\prod_{l = 1}^d \frac{1}{z_l} \le \left( \frac{1}{\log (n \Delta)} \right)^{\frac{1}{4} + a} \right\}.
\end{equation}
Corresponding to the candidate bandwidths, we define the set of candidate estimators as
$$
\mathcal{F}(\mathcal{H}_n) := \left\{ \check{\pi}_{h,n}(x) = \frac{1}{n} \sum_{i = 0}^{n - 1} \mathbb{K}_h(X_{i \Delta} - x) : \, x \in \R^d, \, h \in \mathcal{H}_n \right\}.
$$
Our goal is to select an estimator from the family $\mathcal{F}(\mathcal{H}_n)$ in a fully data-driven way, based solely on the discrete observations of the process~$X$.  
To this end, for any $h, \eta \in \mathcal{H}_n$ and any $x \in \R^d$, we set
$$
\mathbb{K}_h \star \mathbb{K}_\eta (x) := \prod_{j = 1}^d (K_{h_j} \star K_{\eta_j})(x_j) = \prod_{j = 1}^d \int_{\R} K_{h_j}(u - x_j) K_{\eta_j}(u) \, du.
$$
We then introduce the associated kernel density estimator defined by
$$
\check{\pi}_{(h, \eta), n}(x) := \frac{1}{n} \sum_{i = 0}^{n - 1} (\mathbb{K}_h \star \mathbb{K}_\eta)(X_{i \Delta} - x).
$$
Since the convolution is commutative, we have $\check{\pi}_{(h, \eta), n} = \check{\pi}_{(\eta, h), n}$. Next, we define the penalty function $V_n(h)$ as an estimate of the order of the squared variance:
$$
V_n(h) := \frac{1}{n \Delta \prod_{l = 1}^d h_l^2} \min \big(1, (n \Delta)^{2H - 1 + \epsilon}\big).
$$
We then compare the differences between $\check{\pi}_{(h, \eta), n}$ and $\check{\pi}_{\eta, n}$, which heuristically correspond to an estimate of the squared bias:
\begin{equation}\label{eq:Anhx}
A_n(h, x) := \sup_{\eta \in \mathcal{H}_n} \big( |\check{\pi}_{(h, \eta), n}(x) - \check{\pi}_{\eta, n}(x)|^2 - V_n(\eta) \big)_+,
\end{equation}
where $(y)_+ := \max(0, y)$ denotes the positive part of $y$. The selected bandwidth is then defined as
$$
\tilde{h}(x) := \arg \min_{h \in \mathcal{H}_n} \big( A_n(h, x) + V_n(h) \big),
$$
and the final plugin estimator is the one in $\mathcal{F}(\mathcal{H}_n)$ associated with $\tilde{h}(x)$, namely $\check{\pi}_{\tilde{h}(x), n}(x).$}

\TCR{
Before presenting the results on $\check{\pi}_{\tilde{h}(x), n}(x)$, let us introduce some notation.  
In what follows, it will be useful to consider
$\pi_h := \mathbb{K}_h \star \pi,$
that is, the smoothed version of $\pi$, corresponding to the bias component estimated by $\check{\pi}_{h, n}$, since
$\E[\check{\pi}_{h, n}(x)] = \pi_h(x)$.  
Similarly, we define
$\pi_{(h, \eta)} := \mathbb{K}_h \star \mathbb{K}_\eta \star \pi.$
Moreover, we introduce the (pointwise) bias as
$$
B_n(h, x) := \big| \E[\check{\pi}_{h, n}(x)] - \pi(x) \big| = \big| \pi_h(x) - \pi(x) \big|,
$$
and its uniform version as
$$
B_n(h) := \sup_{x \in \R^d} B_n(h, x).
$$
With this notation in mind, we are now ready to state our first result on the adaptive procedure.
\begin{theorem} \label{th: rate adaptive}
Assume that \(\pi\) belongs to the H\"older class \(\mathcal{H}_d(\beta, L)\) and that $\Delta\in[1/n,1]$. Then, there exist $\Lambda, n_0 >0$ such that, if \ref{hyp1} holds with $\lambda\le \Lambda$, then, for all $n \ge n_0$ and for all $x \in \R^d$,
\begin{equation}{\label{eq: result adap}}
\mathbb{E}[|\check{\pi}_{\tilde{h}(x),n}(x) - \pi (x)|^2] \lesssim \inf_{h \in \mathcal{H}_n}(B_n(h) + V_n (h)) + (n \Delta)^{c_1} e^{-c_2 (\log (n \Delta))^{c_3}}
\end{equation}
for positive constants $c_1, c_2$ and for $c_3 > 1$.
\end{theorem}
The proofs of \cref{th: rate adaptive} and Corollary~\ref{cor: rate adaptive final} (stated below) can be found in \cref{s: proof adaptive}.  
We now conclude by establishing that the data-driven estimator attains the same convergence rates as in \cref{th: rate start discrete}.  
In other words, we aim to recover the rate corresponding to the optimal bandwidth choice, under the additional requirement that this bandwidth belongs to the set of candidate values.  
To this end, we fix the collection of candidate bandwidths \( \mathcal{H}_n \) as in \eqref{eq: bandwidths pol}.
\begin{corollary}{\label{cor: rate adaptive final}}
Assume that \(\pi\) belongs to the H\"older class \(\mathcal{H}_d(\beta, L)\) and that $\Delta\in[1/n,1]$. Then, there exist $\Lambda, n_0 >0$ such that, if \ref{hyp1} holds with $\lambda\le \Lambda$, then, for all $n \ge n_0$ and for all $x \in \R^d$,
\begin{equation}
\mathbb{E}[|\check{\pi}_{\tilde{h}(x),n}(x) - \pi (x)|^2] \lesssim
\begin{cases} (\frac{1}{n \Delta})^{\frac{\bar{\beta}}{\bar{\beta} + d}} + (n \Delta)^{c_1} e^{-c_2 (\log (n \Delta))^{c_3}}  & \textnormal{if $H<1/2$},\\
\\
c_\varepsilon (\frac{1}{n \Delta})^{\frac{2(1 - H)\bar{\beta}}{\bar{\beta} + d} - \varepsilon} + (n \Delta)^{c_1}e^{-c_2 (\log (n \Delta))^{c_3}}  &\textnormal{if $H>1/2$},
\end{cases}
\end{equation}
for positive constants $c_1, c_2$ and $c_3 > 1$.
\end{corollary}
\begin{remark}
It is clear that the exponential term in the convergence rates above is negligible compared to the first, polynomial term in \( n \Delta \). Consequently, the estimator constructed using the data-driven bandwidth selection over the set of candidate bandwidths \( \mathcal{H}_n \), as defined in \eqref{eq: bandwidths pol}, achieves the same convergence rates as those established in \cref{th: rate start discrete}.
\end{remark}}

\section{\TCR{Numerical examples}} \label{section:NS}

\TCR{The aim of this section is to illustrate our theoretical results through simulated examples. More precisely, we investigate whether the non-asymptotic bounds on the distance between the kernel estimator and the true density remain valid in practice. To this end, we consider a setting in which the invariant distribution is explicitly known, namely the Ornstein--Uhlenbeck process
\begin{equation} \label{m1}
dX_t = dB_t - X_t\,dt .
\end{equation}
The (marginal) invariant density $\pi$ is the Gaussian distribution $\mathcal N(0,\sigma_H^2 I_d)$, where
\[
\sigma_H^2 = \tfrac{1}{2}\,\Gamma(2H+1) = H\,\Gamma(2H),
\]
and $I_d$ denotes the $d \times d$ identity matrix. Moreover, the drift coefficient satisfies the global contractivity condition with $\kappa = 1$.}

\TCR{The process is discretized by the Euler scheme with time step $\Delta = 0.02$ and the driving fBm is built
via the classical Davies–Harte circulant embedding algorithm for fractional Gaussian noise
increments, see \cite{DaviesHarte1987}. 
To ensure approximate stationarity of the simulated trajectory and avoid dependence on the initial condition $X_0=0$,
we discard the first $10$ time units of simulation (warm-up period). This classical burn-in or thermalization step is standard in ergodic Monte Carlo simulations of Ornstein–Uhlenbeck
and Langevin-type dynamics.}

\TCR{We investigate two regimes: $H = 0.3$ (short memory) and $H = 0.7$ (long memory). In both cases, we first compare the kernel estimator with the true density using a Gaussian kernel (see \cref{fig:compdensity11}). In a second step (see \cref{fig:MSE22}), we compute a Monte Carlo approximation of the mapping
\[
n \longmapsto \mathbb{E}\big[ \lvert \check{\pi}_{h,n}(x) - \pi(x) \rvert^2 \big],
\]
where we recall that
\[
\check{\pi}_{h,n}(x)
= \frac{1}{n} \sum_{i=0}^{n-1} \mathbb{K}_h(x - X_{i\Delta}).
\]
We now provide further details. For \cref{fig:compdensity11}, the computations are performed in dimension $d = 3$, using a Gaussian kernel, with a total time horizon $T = 10^4$. We display the projection onto the first coordinate. For the sake of simplicity, we assume that the process is observed at each discretization time. Although computations could also be carried out to investigate the behavior under sparser observations, our primary objective here is to focus on the comparison with the theoretical rates.}

\begin{figure}[htbp]
    \centering
    \begin{subfigure}[b]{0.48\textwidth}
        \includegraphics[width=\textwidth]{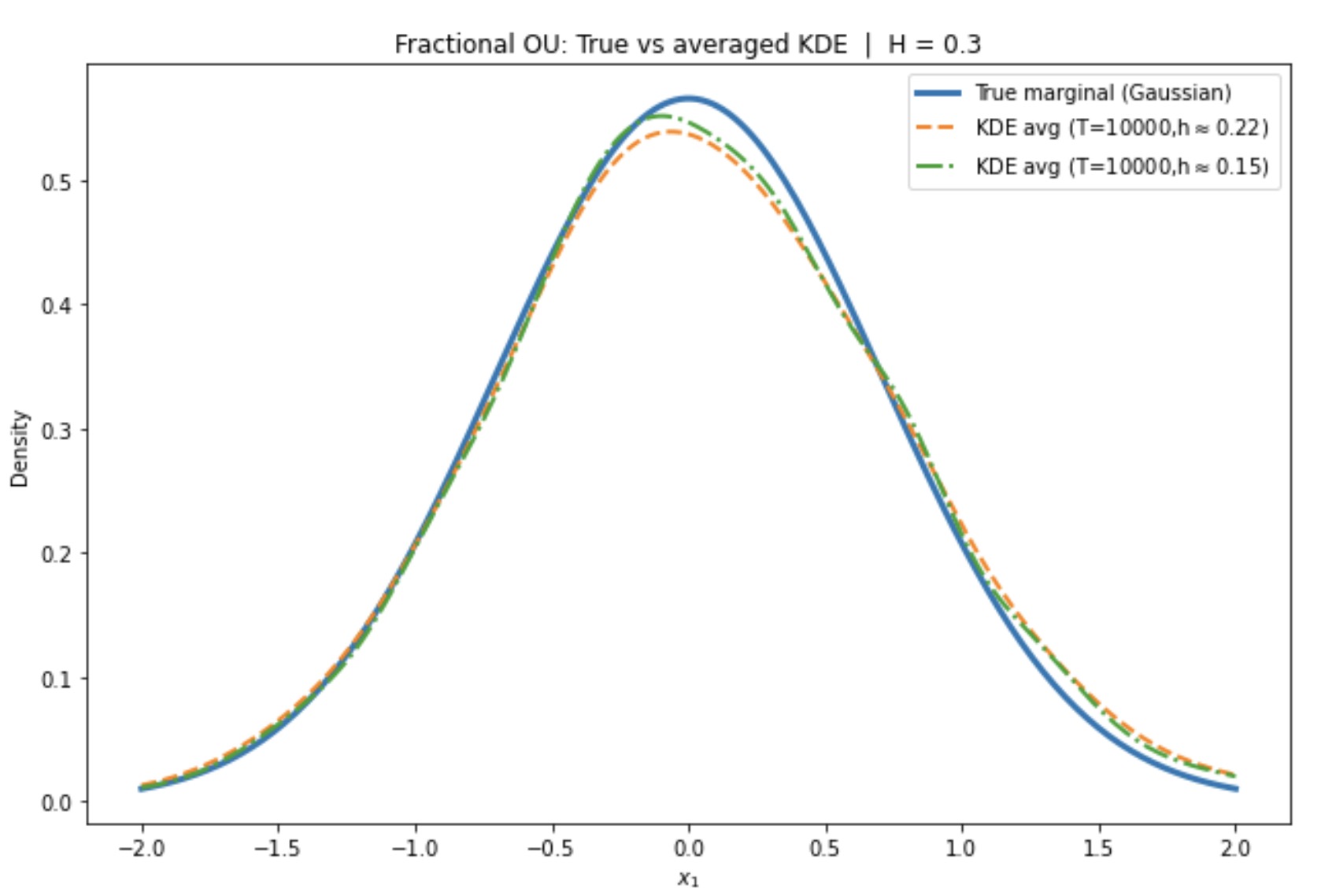}
        \caption{H=0.3, Gaussian kernel}
    \end{subfigure}
    \hfill
    \begin{subfigure}[b]{0.48\textwidth}
        \includegraphics[width=\textwidth]{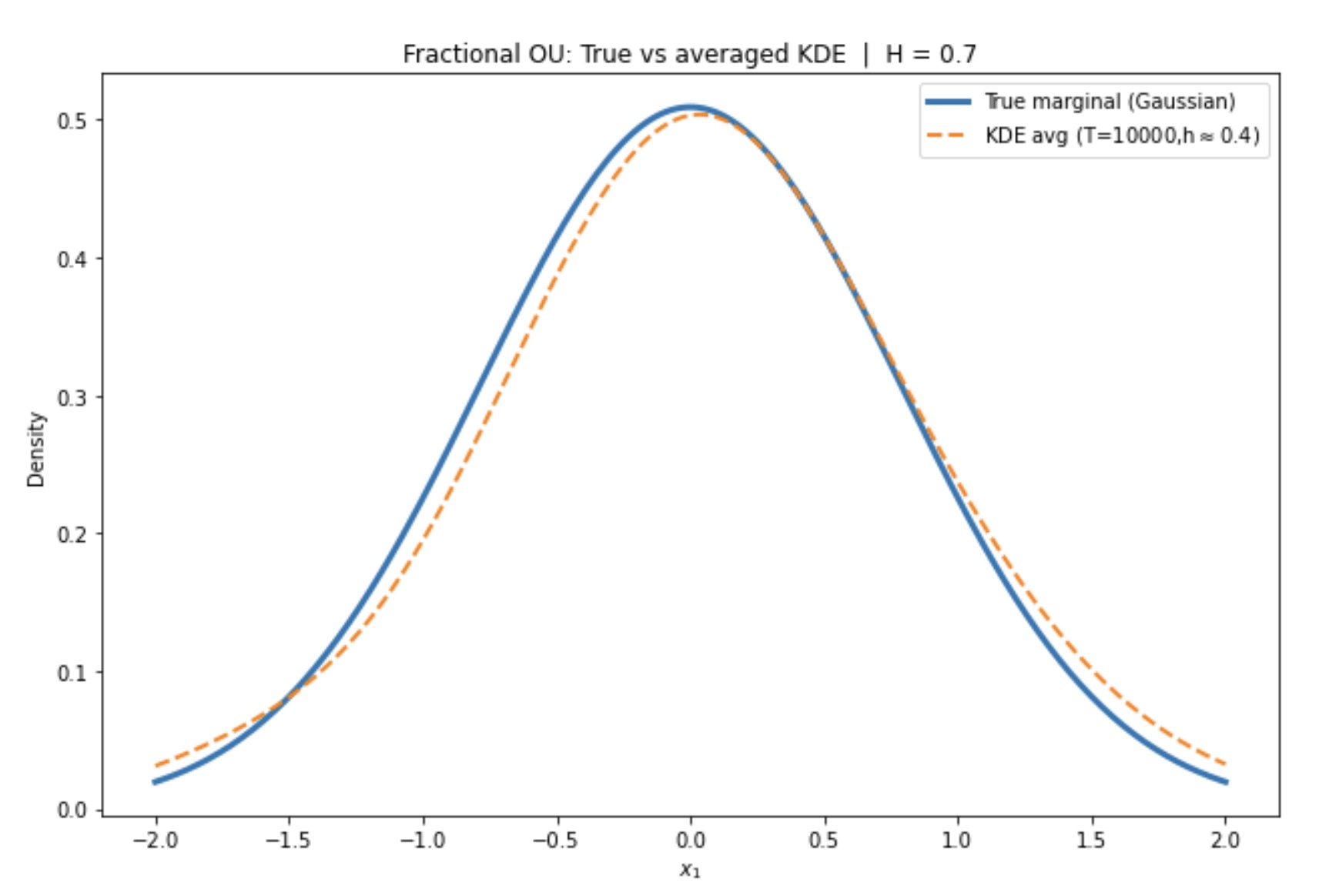}
        \caption{H=0.7, Gaussian kernel}
    \end{subfigure}
    \caption{Comparison between true and estimated densities. Left: $H=0.3$ with $h$ of Theo 1 and Theo 2. Right $H=0.7$.}
    \label{fig:compdensity11}
\end{figure}
\TCR{For this reason, in \cref{fig:MSE22} we explicitly plot a Monte Carlo approximation of the mapping
\[
\bigl\{ \bigl( \log t,\; \log\bigl( \mathbb{E}\big[ \lvert \check{\pi}_{h,\lfloor t/\Delta \rfloor}(x) - \pi(x) \rvert^2 \big] \bigr) \bigr) : t \in [T_0, T] \bigr\}.
\]
Recall that, for a function of the form $f(t) = C t^{-\alpha}$, such a representation yields an affine function with slope $-\alpha$. In this part, we assume that $d = 1$ and consider two kernels: the Gaussian kernel and the Legendre kernel of order $M = 5$\footnote{From a theoretical perspective, it is preferable to choose $M$ large. However, this typically has practical drawbacks, since large values of $M$ require polynomials of higher degree and may lead to large coefficients, which can negatively affect numerical performance.}. We recall that the Legendre kernel is defined by
\[
K(u) = \sum_{m=0}^M \varphi_m(0)\,\varphi_m(u)\,\mathbf{1}_{\{|u| \le 1\}},
\]
where $(\varphi_m)_{m \ge 0}$ denotes the orthogonal basis of Legendre polynomials (see \cite{Tsy} for details). In each figure, the curves are plotted for $T_0 = 10^3$ and $T = 10^4$, for $x \in \{-1, -\tfrac{1}{2}, 0, \tfrac{1}{2}, 1\}$, and the mean is taken using $100$ Monte Carlo simulations. In \cref{fig:MSE22}, the bandwidth $h$ is chosen according to the recommendation of \cref{th: rate start discrete}. If the numerical results are consistent with the theoretical predictions, \cref{th: rate start discrete} implies that the observed slope should be smaller than
\[
\frac{M}{M+1} \quad \text{if } H < \tfrac{1}{2},
\qquad \text{and} \qquad
\frac{2(1-H)M}{M+1} - \varepsilon \quad \text{if } H > \tfrac{1}{2}.
\]
These theoretical slopes are represented by the blue line in the figure. We observe that the empirical rates closely match the theoretical ones when $H < \tfrac{1}{2}$ and are slightly better when $H > \tfrac{1}{2}$, for both the Gaussian and the Legendre kernels.}

\begin{figure}[htbp]
    \centering
    \begin{subfigure}[b]{0.48\textwidth}
        \includegraphics[width=\textwidth]{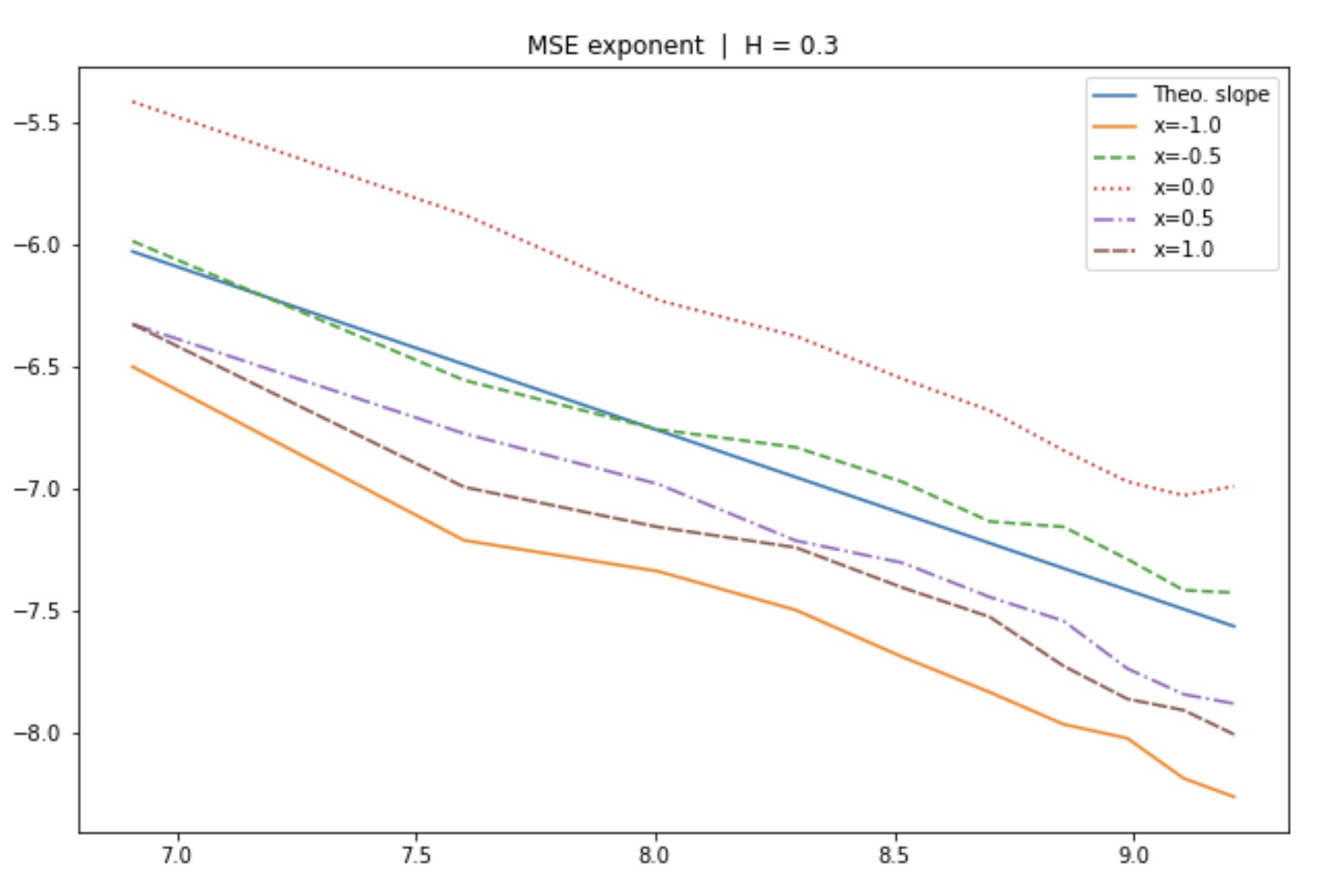}
    \end{subfigure}
    \hfill
    \begin{subfigure}[b]{0.48\textwidth}
        \includegraphics[width=\textwidth]{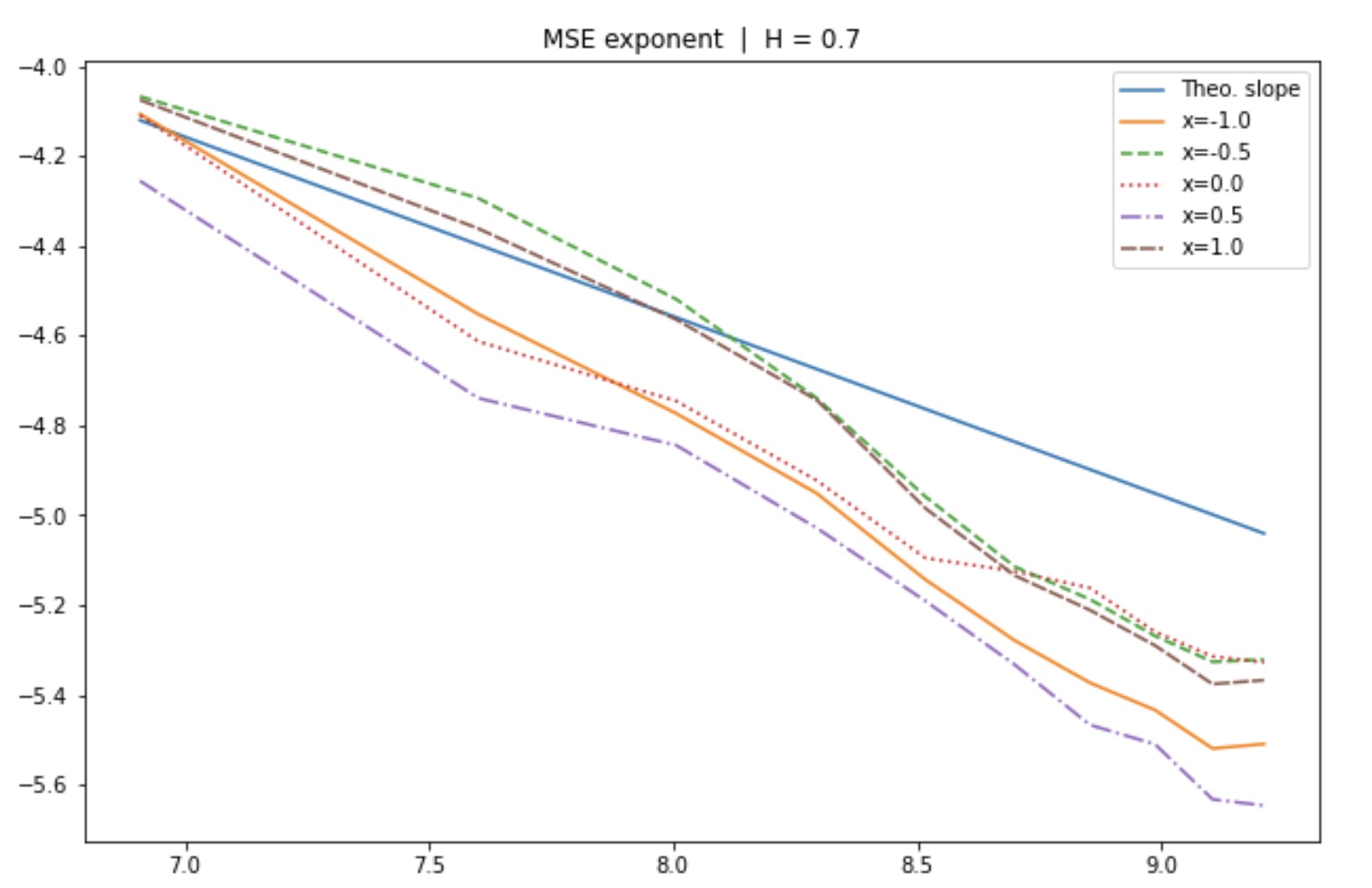}
    \end{subfigure}

    \vspace{0.5cm} 

    \begin{subfigure}[b]{0.48\textwidth}
        \includegraphics[width=\textwidth]{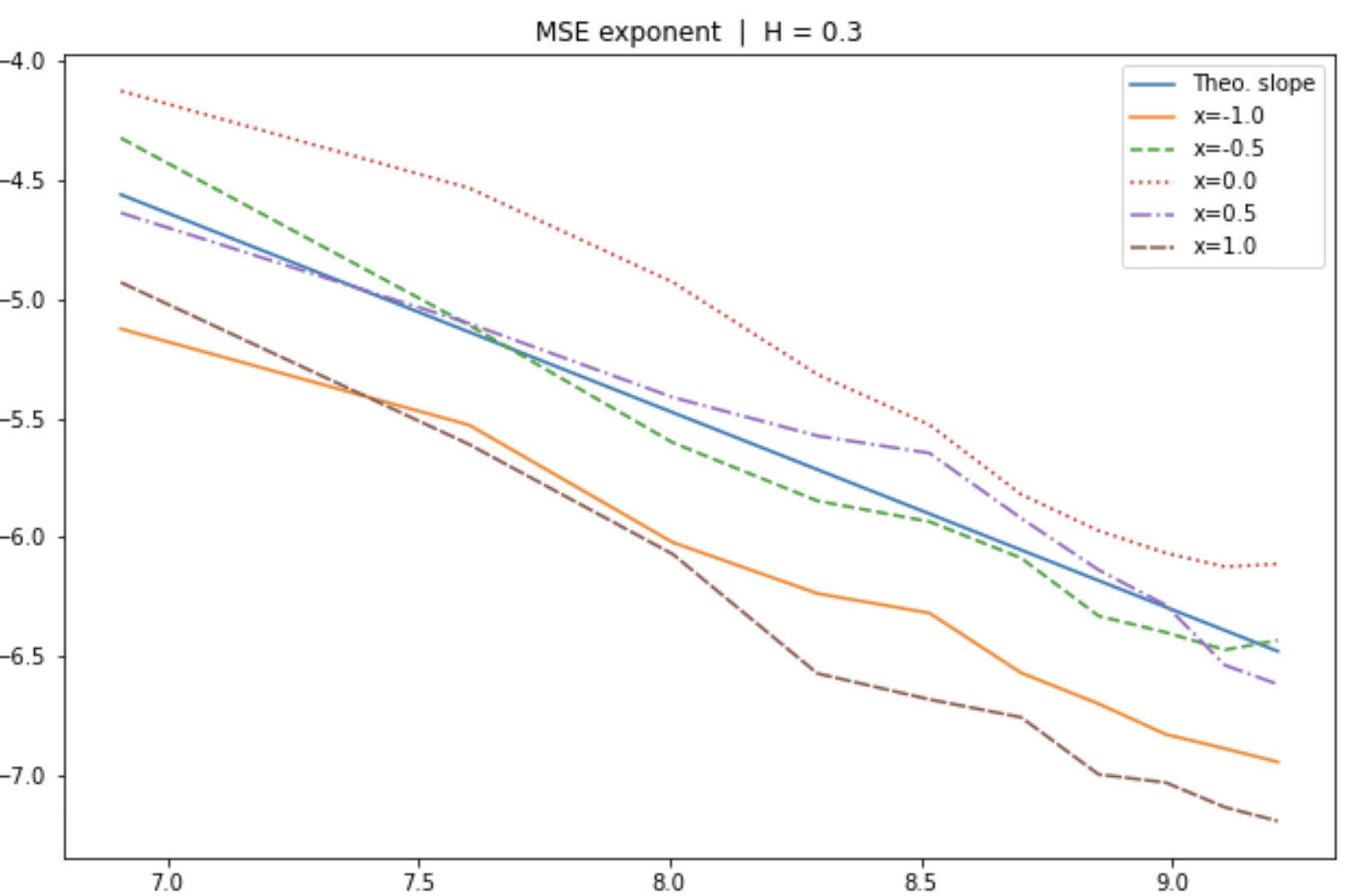}
    \end{subfigure}
    \hfill
    \begin{subfigure}[b]{0.48\textwidth}
        \includegraphics[width=\textwidth]{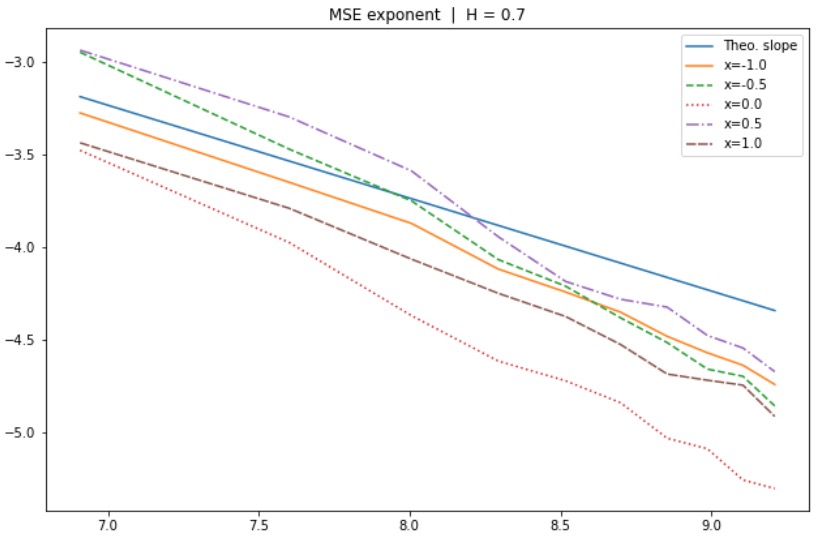}
    \end{subfigure}
    \caption{Evolution of the MSE in logarithmic scale. Blue line: theoretical exponent of Th. \ref{th: rate start discrete}, dotted lines: MSE. Top: Gaussian kernel ($M=2$). Bottom: Legendre kernel with $M=5$.}
    \label{fig:MSE22}
\end{figure}

\begin{remark}
\TCR{An example of a drift satisfying the weaker contractivity condition \textnormal{(\ref{eq:conditionb})} but not the global one can be obtained by replacing the linear drift 
in the three-dimensional fOU equation \eqref{m1} with a state-dependent radial drift $-\phi(\|X_t\|)\, X_t$, where $\phi$ is a smooth function of the distance from the origin. The drift is weaker near the origin and increases with distance, ensuring contractivity outside a compact set while failing globally. The invariant density is not explicit, and its smoothness depends on $\phi$: if $\phi$ is only once differentiable, the density inherits limited regularity and the bias of kernel estimators matters; if $\phi$ is twice differentiable or smoother, the density is sufficiently regular, the bias is negligible, and the estimator operates in a variance-dominated regime as in the fOU case. Thus, the choice of smoothness in the bandwidth schedule reflects the regularity of $\phi$, and while our theoretical results apply directly to this model, performing numerical simulations would require approximating the invariant density, which is not explicit and would be computationally challenging.}
\end{remark}

\section{Preliminary bounds}{\label{s: preliminary bounds}}

For $\ell\in\Cloc^{H-}(\R_+,\R^d)$, we will denote by $\Phi_t(\ell)$ a  solution to
\begin{equation}\label{eq:innovation_sde}
  \Phi_t(\ell)=\ell(t)+\int_0^t b\big(\Phi_s(\ell)\big)\,ds+\sigma\tilde{B}_t,\qquad t\geq 0,
\end{equation}
where $(\tilde{B}_t)_{t\ge0}$ denotes a Liouville process:
\begin{equation}\label{def:liouville}
\tilde{B}_t=\int_0^t(t-u)^{H-\frac{1}{2}} dW_s.
\end{equation}
By \cite[Lemma 3.6]{LPS22}, such an equation has a weak solution as soon as $b$ is Borel measurable with at most linear growth when $H<1/2$ or $b$ is $\alpha$-H\"older with $\alpha>1-(2H)^{-1}$, when $H>1/2$.

 Let us also introduce the following notations related to the Mandelbrot-Van Ness representation (\ref{Man}). We have:
\begin{align*}
  B_{t+h}-B_t&=c_1(H)\int_{-\infty}^t\Big((t+h-u)^{H-\frac12}-(t-u)^{H-\frac12}\Big)\,dW_u \nonumber \\
  &+c_1(H) \int_t^{t+h} (t+h-u)^{H-\frac12}\,dW_u=: \bar{B}_h^t + \tilde{B}_h^t. \label{eq:innovation_process}
\end{align*} 
For any $t\ge0$, $\bar{B}^t$ belongs to $\Cloc^{H-}(\ER_+,\ER^d)$. Furthermore, for any $t\ge0$,
{$\tilde{B}^t\overset{d}{=}\tilde{B}^0=:\tilde{B}$}. 
Note also that the solution $(X_t)_{t\ge0}$ to the fractional SDE satisfies for any $0\le s\le t$,
\begin{equation}\label{eq:semimarkov}
    X_{t+s}=\Phi_{t}(X_s+\sigma \bar{B}^s).
\end{equation}

\subsection{$L^2$-bounds between paths with different pasts}
We start by proving some $L^2$-estimates for paths with different pasts. Then, we will use them to deduce some bounds in total variation. 
\begin{lemma}[{\cite[Lemma 3.12]{LS22a}}]\label{lem:li_sieber}
    Under \ref{hyp1}, for each $R>0$ and each $\eta\in(0,\frac12)$, there is a constant $a_{\eta, R}>0$ such that the following holds. For each $\ell\in\mathcal{C}(\R_+,\R^d)$ there is an event $A_\ell$ with $\mathbb{P}(A_\ell)\geq a_{\eta,R}$ such that
    \begin{equation*}
        \int_0^1\mathbf{1}_{\{t:\,|\Phi_t(\ell)(\omega)|>R\}}(s)\,ds>\eta\qquad\forall\,\omega\in A_\ell.
    \end{equation*}
\end{lemma}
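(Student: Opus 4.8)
The statement is a non-confinement estimate that must hold with a constant $a_{\eta,R}$ \emph{uniform} over all pasts $\ell$, which is the only real difficulty. First note that, for $t\in[0,1]$, $\Phi_t(\ell)$ depends only on $\ell|_{[0,1]}$ and — since $b$ is Lipschitz and $\tilde B_t=\int_0^t(t-u)^{H-\frac12}\,dW_u$ is a continuous (adapted) functional of $W|_{[0,t]}$ — it is a pathwise, measurable functional of $W|_{[0,1]}$ obtained by Picard iteration; in particular $A_\ell:=\{\omega:\int_0^1\mathbf 1_{\{|\Phi_t(\ell)(\omega)|>R\}}\,dt>\eta\}$ is measurable, and this is the event whose probability we must bound below. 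The naive approach — steer $W$ so strongly that $\Phi(\ell)$ is driven out of $B_R$ — cannot give a uniform bound, because the needed steering would have to overcome $\|\ell\|_{\infty,[0,1]}$, which ranges over an unbounded set. (In fact only the Lipschitz continuity of $b$ and the nondegeneracy of $\sigma$ from \ref{hyp1} will be used.)

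\textbf{The pushed-copy device.} Fix a cut-off $\chi\in\mathcal C^\infty([0,1];[0,1])$ with $\chi\equiv0$ near $0$ and $\chi\equiv1$ on $[\delta,1]$, where $\delta>0$ will be chosen with $[b]_{\mathrm{Lip}}\delta\le\frac14$ and $\delta<1-2\eta$ (possible since $\eta<\frac12$). By inverting the fractional-integral relation $\int_0^t(t-u)^{H-\frac12}v_u\,du=c\,e_1\chi(t)$ (an $I_{0+}^{H+\frac12}$-type Volterra operator, invertible on smooth functions vanishing at $0$, using $H>0$) one obtains a deterministic control $v=v_{c}\in L^2([0,1];\R^d)$, \emph{independent of $\ell$}, with $\|v\|_{L^2}=c\,\rho_0$, $\rho_0=\rho_0(H,\delta)$. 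Let $\Phi^0:=\Phi(\ell)$ be driven by $W$ and $\Phi^v:=\Phi(\ell)$ driven by $W+\int_0^\cdot v$. The crucial point is that the difference $D_t:=\Phi^v_t-\Phi^0_t=\sigma c e_1\chi(t)+\int_0^t\big(b(\Phi^v_s)-b(\Phi^0_s)\big)\,ds$ does \emph{not see $\ell$}: a Grönwall estimate with constant $[b]_{\mathrm{Lip}}$ gives $\sup_{[0,\delta]}|D|\le 2\sigma_{\min}c$, hence $|D_\delta|\ge\tfrac12\sigma_{\min}c$, and then $|D_t|\ge|D_\delta|e^{-[b]_{\mathrm{Lip}}(t-\delta)}\ge\tfrac12\sigma_{\min}c\,e^{-[b]_{\mathrm{Lip}}}$ for all $t\in[\delta,1]$. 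Choosing $c$ so that $\tfrac12\sigma_{\min}c\,e^{-[b]_{\mathrm{Lip}}}\ge3R$, we conclude: on the confinement event $A_\ell^c$ (where $\mathrm{Leb}\{t\in[0,1]:|\Phi^0_t|\le R\}\ge1-\eta$) one has, for every $t$ in the set $\{t\in[\delta,1]:|\Phi^0_t|\le R\}$ (of measure $\ge1-\eta-\delta>\eta$), $|\Phi^v_t|\ge|D_t|-|\Phi^0_t|\ge 3R-R>R$. Thus $A_\ell^c\subseteq\{\Phi^v(\ell)\text{ spends time}>\eta\text{ outside }B_R\}$, a deterministic inclusion.

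\textbf{Girsanov transfer.} By the Cameron--Martin theorem, $\mathbb{P}\big(\Phi^v(\ell)\text{ spends time}>\eta\text{ outside }B_R\big)=\mathbb{E}\big[\mathbf 1_{A_\ell}\,\mathcal Z_v\big]$ with $\mathcal Z_v=\exp\!\big(\int_0^1 v\,dW-\tfrac12\|v\|_{L^2}^2\big)$ and $\mathbb{E}[\mathcal Z_v^2]=e^{\|v\|_{L^2}^2}$; here $\|v\|_{L^2}=c\rho_0$ depends only on $R,\eta,[b]_{\mathrm{Lip}},\sigma,H$, not on $\ell$. Combining the inclusion of the previous step with Cauchy--Schwarz, $1-\mathbb{P}(A_\ell)\le\mathbb{P}\big(\Phi^v(\ell)\text{ exits for time}>\eta\big)\le\sqrt{\mathbb{P}(A_\ell)}\;e^{\|v\|_{L^2}^2/2}$. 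Writing $p=\mathbb{P}(A_\ell)$ and $K=e^{\|v\|_{L^2}^2/2}$, the inequality $1-p\le K\sqrt p$ forces $p\ge\big(\tfrac12(\sqrt{K^2+4}-K)\big)^2=:a_{\eta,R}>0$, uniformly in $\ell$; taking $A_\ell$ as above finishes the proof.

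\textbf{Main obstacle and side points.} The genuine obstacle is the uniformity over the unbounded family of pasts, and the resolution is the realization that one must not steer $\Phi(\ell)$ itself (which would require an $\ell$-dependent, arbitrarily large Cameron--Martin shift) but a shifted copy, working with the difference $D_t$ in which the $\ell$-contribution cancels identically, so that a \emph{fixed} shift suffices; everything else is routine. The remaining care concerns: the solvability of $\int_0^\cdot(\cdot-u)^{H-\frac12}v_u\,du=c\,e_1\chi$ for an $L^2$ control (fractional differentiation of order $H+\frac12$, licit because $\chi$ is smooth and vanishes at $0$ and because $H>0$), the continuity/adaptedness of the Liouville process so that Cameron--Martin applies on $\mathcal C([0,1];\R^d)$, and the measurability of $A_\ell$ via Fubini.
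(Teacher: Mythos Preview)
The paper does not prove this lemma; it is quoted verbatim from \cite[Lemma 3.12]{LS22a}, so there is no in-paper argument to compare against. Your proof is nonetheless correct and is, in spirit, the argument of Li--Sieber: a Cameron--Martin shift by a \emph{fixed}, $\ell$-independent control $v$, together with the key observation that in the difference $D_t=\Phi^v_t(\ell)-\Phi^0_t(\ell)$ the past $\ell$ cancels identically, so a single deterministic push of size $\sim R$ forces the shifted copy out of $B_R$ for a positive fraction of $[0,1]$ whenever the original copy stays confined; the Girsanov/Cauchy--Schwarz wrap-up then converts the inclusion $A_\ell^c\subset\{\Phi^v\text{ exits for time}>\eta\}$ into the uniform lower bound on $\mathbb{P}(A_\ell)$.

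Two very minor quibbles, neither structural. In the Grönwall step the upper bound $\sup_{[0,\delta]}|D|\le 2\sigma_{\min}c$ should involve $|\sigma e_1|$ (or the largest singular value), not $\sigma_{\min}$; the subsequent lower bound $|D_\delta|\ge\tfrac12\sigma_{\min}c$ then needs $\delta$ small enough that $[b]_{\mathrm{Lip}}\delta\,|\sigma e_1|\,e^{[b]_{\mathrm{Lip}}\delta}\le\tfrac12\sigma_{\min}$, which is slightly stronger than the stated $[b]_{\mathrm{Lip}}\delta\le\tfrac14$ but of course still achievable. And the invertibility of $I_{0+}^{H+1/2}$ on your test function $\chi$ is indeed standard fractional calculus (for $H>\tfrac12$ one writes $D^{H+1/2}=D^{H-1/2}\circ\partial_t$ and uses that $\chi'\in\mathcal C_c^\infty(0,\delta)$), so the existence of $v\in L^2$ with $\|v\|_{L^2}$ depending only on $H,\delta,c$ is clear.
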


\begin{proposition}\label{lem:stability_innovation}
There exists $\Lambda>0$ such that if  \ref{hyp1} holds with $\lambda\le \Lambda$,  then, some constants $c,C>0$ and $\rho\in(0,1)$ exist such that, for each $\ell_1,\ell_2\in\mathcal{C}^1(\R_+,\R^d)$, it holds that for all $\tau\ge 1$,
    \begin{equation*}
        \ES[|\Phi_{\tau}(\ell_1) - \Phi_{\tau}(\ell_2)|^2]\leq C\left(e^{-c\tau}\ES[|\Phi_{1}(\ell_1) - \Phi_{1}(\ell_2)|^2] +\sum_{k=1}^{\lfloor\tau\rfloor}\rho^{\tau-k}\|{\dot{\ell}_1-\dot{\ell}_2}\|_{\infty;[k,(k+1)]}^2\right).
    \end{equation*}
\end{proposition}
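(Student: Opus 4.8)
The plan is to reduce the estimate on $[0,\tau]$ to a one-step contraction estimate on unit intervals, which can then be iterated. Write $\tau = n + r$ with $n = \lfloor\tau\rfloor$ and $r \in [0,1)$. For $k = 0, 1, \dots, n$, set $u_k := \Phi_k(\ell_1) - \Phi_k(\ell_2)$ (with $u_0 = 0$ since $\Phi_0(\ell_i) = \ell_i(0)$—actually one should be slightly careful, but the flow property \eqref{eq:semimarkov}-style identity $\Phi_{t+s}(\ell) = \Phi_t(\theta_s\ell + \text{shifted noise})$ lets us restart). The key observation is a \emph{semigroup/cocycle identity}: conditionally on $\mathcal{F}_k$, the segment $(\Phi_{k+t}(\ell_i))_{t\in[0,1]}$ solves an equation of the same form \eqref{eq:innovation_sde} driven by the innovation $W$ on $[k,k+1]$, started from $\Phi_k(\ell_i)$ and with input function $t\mapsto \ell_i(k+t)$. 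So I would first prove a \textbf{one-step lemma}: there exist $\rho \in (0,1)$, $C>0$ (depending on $\kappa, R, [b]_{\mathrm{Lip}}$, and requiring $\lambda \le \Lambda$) such that for any two $\mathcal{C}^1$ inputs and any starting points,
\begin{equation*}
\ES\big[|\Phi_1(\ell_1) - \Phi_1(\ell_2)|^2 \,\big|\, \Phi_0\big] \le \rho\,|\Phi_0(\ell_1) - \Phi_0(\ell_2)|^2 + C\,\|\dot\ell_1 - \dot\ell_2\|_{\infty;[0,1]}^2,
\end{equation*}
and more generally the same with $[0,1]$ replaced by $[k,k+1]$ after conditioning on $\mathcal{F}_k$.

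To prove the one-step lemma I would work pathwise with $Y_t := \Phi_t(\ell_1) - \Phi_t(\ell_2)$, which satisfies $\dot Y_t = \dot\ell_1(t) - \dot\ell_2(t) + b(\Phi_t(\ell_1)) - b(\Phi_t(\ell_2))$ (the Liouville noise $\sigma\tilde B_t$ cancels). Then $\frac{d}{dt}|Y_t|^2 = 2\langle Y_t, \dot\ell_1(t)-\dot\ell_2(t)\rangle + 2\langle Y_t, b(\Phi_t(\ell_1)) - b(\Phi_t(\ell_2))\rangle$. On the event where both $|\Phi_t(\ell_1)|, |\Phi_t(\ell_2)| \ge R$, Hypothesis \ref{hyp1} gives $\langle Y_t, b(\cdot)-b(\cdot)\rangle \le -\kappa|Y_t|^2$; otherwise it is only bounded by $\lambda |Y_t|^2$ (and always by $[b]_{\mathrm{Lip}}|Y_t|^2$). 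The role of \cref{lem:li_sieber} is exactly to guarantee that for each path the process spends a fixed fraction $\eta$ of the time interval $[0,1]$ outside the ball of radius $R$, \emph{on an event of probability at least $a_{\eta,R}$}. Using Young's inequality on the forcing term ($2\langle Y_t, \dot\ell_1-\dot\ell_2\rangle \le \epsilon|Y_t|^2 + \epsilon^{-1}\|\dot\ell_1-\dot\ell_2\|_\infty^2$) and Grönwall, on that good event one gets $|Y_1|^2 \le e^{-c_0}|Y_0|^2 + C\|\dot\ell_1-\dot\ell_2\|_\infty^2$ for some $c_0>0$ provided $\lambda$ is small enough (so that the bad portion of the trajectory, of length $\le 1-\eta$, cannot undo the contraction gained on the good portion); on the complementary event one uses the crude Lipschitz bound $|Y_1|^2 \le e^{2[b]_{\mathrm{Lip}}}|Y_0|^2 + C\|\dot\ell_1-\dot\ell_2\|_\infty^2$. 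Taking expectations and using $\mathbb{P}(A_\ell)\ge a_{\eta,R}$, $\mathbb{P}(A_\ell^c)\le 1-a_{\eta,R}$, the weighted average of $e^{-c_0}$ and $e^{2[b]_{\mathrm{Lip}}}$ can be made $<1$ by choosing $R$ (hence $\eta$, $a_{\eta,R}$) and then $\Lambda$ appropriately — this is the one-step contraction with some $\rho\in(0,1)$. (Some care is needed because the good event $A_\ell$ from \cref{lem:li_sieber} depends on one reference path $\ell$; I would apply it with $\ell = \Phi_\cdot(\ell_1)$ say, and control the other trajectory via continuity in initial conditions / a union bound, or invoke the version of the argument already used in \cite{LS22a}.)

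Next I would \textbf{iterate}. Conditioning successively and applying the one-step lemma on $[k-1,k]$ for $k=2,\dots,n$, and then absorbing the leftover fractional piece $[n, n+r] \subset [n, n+1]$ by the crude Lipschitz/Grönwall bound (constant cost), I get
\begin{equation*}
\ES[|Y_\tau|^2] \le C\rho^{\,n-1}\,\ES[|Y_1|^2] + C\sum_{k=2}^{n}\rho^{\,n-k}\,\|\dot\ell_1 - \dot\ell_2\|_{\infty;[k-1,k]}^2 + C\,\|\dot\ell_1-\dot\ell_2\|_{\infty;[n,n+1]}^2.
\end{equation*}
Since $n - 1 \ge \tau - 2$, we have $\rho^{n-1} \le \rho^{-2}\rho^\tau = \rho^{-2} e^{-c\tau}$ with $c := -\log\rho > 0$, and $\rho^{n-k} \le \rho^{-2}\rho^{\tau - k}$, so after renaming constants this is exactly the claimed bound
\begin{equation*}
\ES[|\Phi_\tau(\ell_1) - \Phi_\tau(\ell_2)|^2] \le C\Big(e^{-c\tau}\,\ES[|\Phi_1(\ell_1)-\Phi_1(\ell_2)|^2] + \sum_{k=1}^{\lfloor\tau\rfloor}\rho^{\tau-k}\|\dot\ell_1 - \dot\ell_2\|_{\infty;[k,(k+1)]}^2\Big),
\end{equation*}
where the indexing shift between $[k-1,k]$ and $[k,k+1]$ is absorbed into $C$ and the base of the geometric series. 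The \textbf{main obstacle} is the one-step lemma: making the probabilistic averaging of the contracting and non-contracting events genuinely produce $\rho<1$, which forces the smallness condition $\lambda \le \Lambda$ and requires quantitative control of how \cref{lem:li_sieber}'s occupation-time event interacts with the Grönwall estimate when $\lambda>0$ (i.e. when the drift is only semi-contractive). Everything else — the flow/cocycle identity, the Young inequality, the geometric summation — is routine.
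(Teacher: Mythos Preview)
Your strategy matches the paper's: derive a pathwise differential inequality for $|Y_t|^2$ via Young's inequality on the forcing term, split into contracting and non-contracting regimes, invoke \cref{lem:li_sieber} to get a one-step estimate
\[
\ES[|\Phi_k(\ell_1)-\Phi_k(\ell_2)|^2]\le \rho\,\ES[|\Phi_{k-1}(\ell_1)-\Phi_{k-1}(\ell_2)|^2]+C\|\dot\ell_1-\dot\ell_2\|_{\infty;[k-1,k]}^2,
\]
iterate over integer steps, and absorb the fractional leftover $[\lfloor\tau\rfloor,\tau]$ by the crude Gr\"onwall bound. The paper does exactly this, deferring the details of the one-step contraction to \cite{LS22a}.

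One point in your explicit averaging needs fixing. On the complementary event $A_\ell^c$ you propose the bound $|Y_1|^2\le e^{2[b]_{\mathrm{Lip}}}|Y_0|^2+C\|\dot\ell_1-\dot\ell_2\|_\infty^2$ and then claim the weighted average of $e^{-c_0}$ and $e^{2[b]_{\mathrm{Lip}}}$ can be made $<1$ ``by choosing $R$''. This does not work: $R$ is fixed by \ref{hyp1}, and \cref{lem:li_sieber} only guarantees $a_{\eta,R}>0$, not that it can be pushed close to $1$. The cure is to notice that the second line of \eqref{eq:conditionb} applies \emph{everywhere} (since $-\kappa<\lambda$), so on $A_\ell^c$ one still has the pathwise bound with exponent $2\lambda+\epsilon$ rather than $2[b]_{\mathrm{Lip}}$; this is precisely the paper's estimate \eqref{eq:taufloortau}. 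Then
\[
a_{\eta,R}\,e^{-c_0}+(1-a_{\eta,R})\,e^{2\lambda+\epsilon}<1
\]
holds for \emph{any} $a_{\eta,R}>0$ once $\lambda\le\Lambda$ and $\epsilon$ are small enough, which is where the smallness condition on $\lambda$ actually enters. With this correction your argument goes through.
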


\begin{proof}
    First notice that we have
    \begin{align*}
        \ES[|\Phi_{\tau}(\ell_1) &- \Phi_{\tau}(\ell_2)|^2]=\ES[\ES[|\Phi_{\tau}(\ell_1) - \Phi_{\tau}(\ell_2)|^2|\mathcal{F}_{\tau -1}]]\\
        &=\ES[|\Phi_1(x+\theta_{\tau-1}(\ell_1)+\varsigma) - \Phi_1(y+\theta_{\tau-1}(\ell_2)+\varsigma)|^2]|_{x=\Phi_{\tau-1}(\ell_1),y=\Phi_{\tau-1}(\ell_2),\varsigma=\bar{B}^{t}_{\cdot+t}},
    \end{align*}
    where $\theta_{\tau-1}(\ell_1)(t)=\ell_1(\tau - 1 + t) - \ell_1(\tau-1)$ is the Wiener shift. {By \ref{hyp1}} and the elementary inequality $\langle a,b\rangle\le 2^{-1}(\frac{|a|^2}{\epsilon}+\epsilon|b|^2)$, for each $\epsilon>0$,
    \begin{align*}
        \frac{d}{dt}|\Phi_{t}(\ell_1)-\Phi_{t}(\ell_2)|^2&=2\langle\dot{\ell}_1(t)-\dot{\ell}_2(t)+b(\Phi_t(\ell_1))-b(\Phi_t(\ell_2)),\Phi_{t}(\ell_1)-\Phi_{t}(\ell_2)\rangle \\
        &\leq\frac{|\dot{\ell}_1(t)-\dot{\ell}_2(t)|^2}{\epsilon}+(2\lambda + \epsilon)|\Phi_{t}(\ell_1)-\Phi_{t}(\ell_2)|^2.
    \end{align*}
    For $0\leq t_1\leq t_2\leq 1$, this implies the pathwise estimate
    \begin{align}
        |\Phi_{t_2}(\ell_1)-\Phi_{t_2}(\ell_2)|^2&\leq|\Phi_{t_1}(\ell_1)-\Phi_{t_1}(\ell_2)|^2 e^{(2\lambda+\epsilon)(t_2-t_1)}\nonumber\\
        &+\epsilon^{-1}\int_{t_1}^{t_2}e^{(2\lambda+\epsilon)(t_2 - s)}|\dot{\ell}_1(s)-\dot{\ell}_2(s)|^2\,ds \nonumber \\
        &\leq |\Phi_{t_1}(\ell_1)-\Phi_{t_1}(\ell_2)|^2 e^{(2\lambda+\epsilon)(t_2-t_1)}+C\epsilon^{-1}\|\dot{\ell}_1-\dot{\ell}_2\|_\infty^2 (t_2-t_1).\label{eq:taufloortau}
    \end{align}
    If the interval is such that $\Phi(\ell_1)\restriction_{[t_1,t_2]}$ lies within the contractive region of $b$ (typically for an $R$ larger than the contractive radius of the drift and a smaller $\kappa$), then we obtain the following estimate
    \begin{align*}
        |\Phi_{t_2}(\ell_1)-\Phi_{t_2}(\ell_2)|^2&\leq|\Phi_{t_1}(\ell_1)-\Phi_{t_1}(\ell_2)|^2 e^{(\epsilon-\kappa)(t_2-t_1)}+c \epsilon^{-1}\|\dot{\ell}_1-\dot{\ell}_2\|_\infty^2 (t_2-t_1).
    \end{align*}
    Combining these two estimates with \cref{lem:li_sieber} as done in \cite{LS22a}, this gives that, for an $\epsilon>0$ sufficiently small, 
    \begin{equation*}
        \ES[|\Phi_{\tau}(\ell_1) - \Phi_{\tau}(\ell_2)|^2]\leq\rho\ES[|\Phi_{\tau-1}(\ell_1) - \Phi_{\tau-1}(\ell_2)|^2]+C\|\dot{\ell}_1-\dot{\ell}_2\|_{\infty;[\tau-1,\tau]}^2.
    \end{equation*}
 {Iterating this bound achieves the proof when $\tau$ is an integer. To extend it to any $\tau\ge1$, one can use the inequality \eqref{eq:taufloortau} with $t_1=\lfloor \tau \rfloor $ and $t_2=\tau$, where the value of the constant $C$ may potentially change.} 
\end{proof}
In the next corollary, we want to deduce some $L^2$-bounds when the process $\ell$ corresponds to the memory of the Brownian motion before time $0$.  To this end, we recall that the linear operator
$$\omega \mapsto \int_{-\infty}^0 ( (t-s)^{H-\frac{1}{2}}-(-s)^{H-\frac{1}{2}} )dw_s$$
is (clearly) well-defined on ${\cal C}_0^\infty(\ER^{-})$ and continuously extends to $\mathcal{W}$.

%
\begin{corollary}\label{cor:L2bound} Let ${\cal C}_\varepsilon$ denote the space of continuous functions $w$ on $(-\infty,0]$ with $w_0=0$ and 
\begin{equation}\label{eq:mathfrakCesp}
\frC(w,\epsilon):=\sup_{s\le 0}  \frac{|w_s|}{(1\vee|s|)^{\frac{1+\varepsilon}{2}}}<+\infty\quad  \forall\,\varepsilon>0.
\end{equation}
 For $t>0$, set
$$\ell_t(x,w)=x+c_1(H)\int_{-\infty}^0 \left((t-s)^{H-\frac{1}{2}}-(-s)^{H-\frac{1}{2}} \right) dw_s.$$
 Then, there exists $c>0$ such that for any $\varepsilon\in(0,1)$, the following bound holds for every $w$, $\tilde{w}\in{\cal C}_\varepsilon$, $x,y\in\ER^d$ and $\tau\ge 1$:
    \begin{equation*}
        \ES[|\Phi_{\tau}(\ell(x,w)\TCR{)} - \Phi_{\tau}(\ell(y,\tilde{w}))|^2]\lesssim_{\varepsilon}  e^{-c\tau}|x-y|^2+ \TCR{(\frC(w-\tilde{w},\epsilon))^2}\tau^{2H-2+\varepsilon}.
        \end{equation*}
        Furthermore, if $w$ and $\tilde{w}$ are such that $w_t=\tilde{w}_t$ on $(-\infty,-1]$, then
         \begin{equation*}
        \ES[|\Phi_{\tau}(\ell(x,w)\TCR{)} - \Phi_{\tau}(\ell(y,\tilde{w}))|^2]\lesssim e^{-c\tau}|x-y|^2+ \TCR{ \|w-\tilde{w}\|^2_{\infty,[-1,0]} }\tau^{2H-5}.
  \end{equation*}
 \end{corollary}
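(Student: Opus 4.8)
The plan is to reduce the corollary to Proposition \ref{lem:stability_innovation} by taking $\ell_1(t)=\ell_t(x,w)$ and $\ell_2(t)=\ell_t(y,\tilde w)$ and estimating the two ingredients that appear on the right-hand side there: the initial $L^2$-discrepancy $\ES[|\Phi_1(\ell_1)-\Phi_1(\ell_2)|^2]$ and the weighted sum of the increments $\|\dot\ell_1-\dot\ell_2\|_{\infty;[k,k+1]}^2$. Note first that $\ell_1(t)-\ell_2(t)=(x-y)+\alpha_H\int_{-\infty}^0\big((t-s)^{H-1/2}-(-s)^{H-1/2}\big)d(w-\tilde w)_s$, so everything is governed by the single function $v:=w-\tilde w$ and the kernel $g_t(s):=(t-s)^{H-1/2}-(-s)^{H-1/2}$. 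The first thing I would do is record the pointwise bound on the "memory increment" $m(t):=\alpha_H\int_{-\infty}^0 g_t(s)\,dv_s$: integrating by parts (to move the differential off $dv$ and onto $g$) and using $|v_s|\le \frC(v,\varepsilon)(1\vee|s|)^{(1+\varepsilon)/2}$, one gets, after splitting the integral at $s=-1$ and at $s=-t$, a bound of the form $|m(t)|\lesssim_\varepsilon \frC(v,\varepsilon)\,(1+t)^{H-1+\varepsilon/2}$ (or a cleaner power; the exact exponent must match $\tau^{2H-2+\varepsilon}$ after squaring). Differentiating $g_t(s)$ in $t$ gives $(H-\tfrac12)(t-s)^{H-3/2}$, which is integrable against $dv$ for $t>0$ after the same integration-by-parts argument, yielding $|\dot\ell_1(t)-\dot\ell_2(t)|=|m'(t)|\lesssim_\varepsilon \frC(v,\varepsilon)\,t^{H-3/2+\varepsilon/2}$; hence $\|\dot\ell_1-\dot\ell_2\|_{\infty;[k,k+1]}^2\lesssim_\varepsilon \frC(v,\varepsilon)^2\, k^{2H-3+\varepsilon}$ for $k\ge1$.

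Next I would feed these into Proposition \ref{lem:stability_innovation}. For the geometric sum, $\sum_{k=1}^{\lfloor\tau\rfloor}\rho^{\tau-k}k^{2H-3+\varepsilon}$: the terms with $k\le\tau/2$ contribute $O(\rho^{\tau/2})$, which is $O(e^{-c\tau})$, while the terms with $k>\tau/2$ are each $\lesssim \tau^{2H-3+\varepsilon}$ and the geometric weights sum to a constant, so the whole sum is $\lesssim_\varepsilon \tau^{2H-3+\varepsilon}+e^{-c\tau}$. For the first term I need $\ES[|\Phi_1(\ell_1)-\Phi_1(\ell_2)|^2]$; using the pathwise Grönwall-type inequality \eqref{eq:taufloortau} on $[0,1]$, $\ES[|\Phi_1(\ell_1)-\Phi_1(\ell_2)|^2]\lesssim |\ell_1(0)-\ell_2(0)|^2+\|\dot\ell_1-\dot\ell_2\|_{\infty;[0,1]}^2$, and here $\ell_1(0)-\ell_2(0)=x-y$, while $\|\dot\ell_1-\dot\ell_2\|_{\infty;[0,1]}^2$ needs care near $t=0$ because $m'(t)\sim t^{H-3/2}$ blows up; this is actually the only genuinely delicate point, and I would handle it by using instead the integrated form — i.e. bound $\int_0^1 e^{(2\lambda+\epsilon)(1-s)}|\dot\ell_1(s)-\dot\ell_2(s)|^2ds\lesssim_\varepsilon \int_0^1 s^{2H-3+\varepsilon}ds\cdot\frC(v,\varepsilon)^2$, which converges iff $2H-3+\varepsilon>-1$, i.e. $\varepsilon>2-2H$; since we may take $\varepsilon\in(0,1)$ small this fails, so I would instead not pass through $\|\cdot\|_\infty$ at all near $0$ but observe that $\Phi_t(\ell_1)-\Phi_t(\ell_2)$ actually equals $(\ell_1(t)-\ell_2(t))+\int_0^t(b(\Phi_s(\ell_1))-b(\Phi_s(\ell_2)))ds$ with no stochastic term, so $|\Phi_t(\ell_1)-\Phi_t(\ell_2)|\le |m(t)|+|x-y|+[b]_{\mathrm{Lip}}\int_0^t|\Phi_s(\ell_1)-\Phi_s(\ell_2)|ds$, and since $\sup_{t\in[0,1]}|m(t)|\lesssim_\varepsilon\frC(v,\varepsilon)$ is finite, Grönwall gives $\sup_{t\in[0,1]}|\Phi_t(\ell_1)-\Phi_t(\ell_2)|\lesssim_\varepsilon |x-y|+\frC(v,\varepsilon)$, hence $\ES[|\Phi_1(\ell_1)-\Phi_1(\ell_2)|^2]\lesssim_\varepsilon |x-y|^2+\frC(v,\varepsilon)^2$.

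Combining, $\ES[|\Phi_\tau(\ell_1)-\Phi_\tau(\ell_2)|^2]\lesssim_\varepsilon e^{-c\tau}(|x-y|^2+\frC(v,\varepsilon)^2)+\frC(v,\varepsilon)^2\tau^{2H-3+\varepsilon}$, and absorbing $\frC(v,\varepsilon)^2\geq$ (a power of itself, after relabelling $\varepsilon$) and noting $2H-3+\varepsilon\le 2H-2+\varepsilon$, this yields the claimed bound $\lesssim_\varepsilon \frC(v,\varepsilon)^2(e^{-c\tau}|x-y|^2+\tau^{2H-2+\varepsilon})$ once we also bound the constant-times-$e^{-c\tau}$ term by $\frC(v,\varepsilon)^2\tau^{2H-2+\varepsilon}$ up to enlarging $\varepsilon$ (or simply keep it, since $e^{-c\tau}\lesssim \tau^{2H-2+\varepsilon}$). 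For the second, sharper statement, if $w_s=\tilde w_s$ on $(-\infty,-1]$ then $v$ is supported in $[-1,0]$, so in the integration by parts the boundary term at $-\infty$ vanishes and the integral $\int_{-1}^0 g_t(s)\,dv_s$ can be estimated by $\|v\|_{\infty,[-1,0]}$ times $\int_{-1}^0|g_t'(s)|ds$; since for large $t$ one has $|g_t(s)|=|(t-s)^{H-1/2}-(-s)^{H-1/2}|\lesssim t^{H-3/2}|s|$ uniformly for $s\in[-1,0]$ (mean value theorem, as $H-1/2<0$) and similarly $|\partial_t g_t(s)|\lesssim t^{H-3/2}$, we get $|m(t)|\lesssim \|v\|_{\infty,[-1,0]}t^{H-3/2}$ and $|\dot\ell_1(t)-\dot\ell_2(t)|\lesssim\|v\|_{\infty,[-1,0]}t^{H-5/2}$ for $t\ge1$; repeating the above argument with $k^{2H-5+\varepsilon}$ in place of $k^{2H-3+\varepsilon}$ and using that the $\varepsilon$ can now be dropped (the exponent $2H-5$ is already $<-1$ without it) gives $\lesssim\|w-\tilde w\|^2_{\infty,[-1,0]}(e^{-c\tau}|x-y|^2+\tau^{2H-5})$.

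I expect the main obstacle to be the behaviour of $\dot\ell_1-\dot\ell_2$ near $t=0$: the Liouville/Mandelbrot kernel derivative is non-integrable there, so one cannot simply plug $\|\dot\ell_1-\dot\ell_2\|_{\infty;[0,1]}$ into \eqref{eq:taufloortau}; the fix is to estimate $\Phi_t(\ell_1)-\Phi_t(\ell_2)$ on $[0,1]$ directly via Grönwall using the bounded quantity $\sup_{[0,1]}|m(t)|$ rather than its derivative, which is legitimate because the Liouville noise cancels in the difference. A secondary bookkeeping point is making the exponents in the weighted geometric sums match the target $\tau^{2H-2+\varepsilon}$ (resp. $\tau^{2H-5}$) exactly; this is routine once the pointwise kernel estimates are in hand.
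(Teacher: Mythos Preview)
Your strategy is exactly the paper's: feed $\ell_1=\ell(x,w)$, $\ell_2=\ell(y,\tilde w)$ into Proposition~\ref{lem:stability_innovation}, estimate $\|\dot\ell_1-\dot\ell_2\|_{\infty;[k,k+1]}$ via the integrated-by-parts representation $\dot\ell_t=c_H\int_{-\infty}^0(t-s)^{H-5/2}w_s\,ds$, and handle $\ES[|\Phi_1(\ell_1)-\Phi_1(\ell_2)|^2]$ by a Grönwall argument on $[0,1]$. There is, however, a concrete computational slip: your decay exponent $|m'(t)|\lesssim\frC(v,\varepsilon)\,t^{H-3/2+\varepsilon/2}$ is too strong and is false. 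Since $|v_s|\le\frC(v,\varepsilon)(1\vee|s|)^{(1+\varepsilon)/2}$ \emph{grows} like $|s|^{(1+\varepsilon)/2}$, one has $\int_{-\infty}^0(t-s)^{H-5/2}(1\vee|s|)^{(1+\varepsilon)/2}\,ds\asymp t^{H-1+\varepsilon/2}$ for $t\ge1$, not $t^{H-3/2+\varepsilon/2}$ (the latter is what you would get for bounded $v$). This is precisely the paper's bound \eqref{eq: 1}. With the correct exponent the weighted sum $\sum_{k=1}^{\lfloor\tau\rfloor}\rho^{\tau-k}k^{2H-2+\varepsilon}$ already gives $\tau^{2H-2+\varepsilon}$, and no ``relaxing from $2H-3+\varepsilon$'' is needed; your final conclusion survives only because you artificially weakened the (incorrect) intermediate bound back to the true one. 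For the compactly supported case your exponent $t^{H-5/2}$ is correct and matches the paper's \eqref{eq: 2}.

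Your treatment of the $[0,1]$ initial estimate is in fact more careful than the paper's. The paper simply asserts $\|\dot\ell_1-\dot\ell_2\|_{\infty,[0,1]}\le\frC(v,\varepsilon)$ and applies Grönwall, but as you observe the kernel $\int_{-1}^0(t-s)^{H-5/2}\,ds$ blows up like $t^{H-3/2}$ as $t\to0^+$, so this sup norm is in general infinite for $w,\tilde w$ satisfying only the growth condition $\frC<\infty$. Your fix---use the pathwise identity $\Phi_t(\ell_1)-\Phi_t(\ell_2)=(x-y)+m(t)+\int_0^t(b(\Phi_s(\ell_1))-b(\Phi_s(\ell_2)))\,ds$ and Grönwall with $\sup_{[0,1]}|m(t)|$ rather than $\sup_{[0,1]}|m'(t)|$---is the right workaround. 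To fully justify $\sup_{[0,1]}|m(t)|\lesssim\frC(v,\varepsilon)$ one ultimately needs some local Hölder regularity of $v$ near $0$ when $H<1/2$ (available in the applications, where $w,\tilde w$ are Brownian paths in $\mathcal W$); both you and the paper leave this implicit.
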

 \begin{proof} For all $t>0$,
 $$\dot{\ell}_t(x,w)=c_1(H)\left(H-\frac{1}{2}\right)\left(\frac{3}{2}-H\right)\int_{-\infty}^0 (t-s)^{H-\frac{5}{2}} w_s ds.$$
 For every $t\ge1$ and $\varepsilon>0$,
 \begin{equation}{\label{eq: 1}}
|\dot{\ell}_t(x,w)-\dot{\ell}_t(y,\tilde{w})|\lesssim c_\varepsilon t^{{H-1+\frac{\varepsilon}{2}}}\quad\textnormal{with} \quad c_\varepsilon=\frC(w-\tilde{w},\epsilon).
 \end{equation}
By \cref{lem:stability_innovation}, we get for all $\tau\ge 2$,
\begin{equation*}
        \ES[|\Phi_{\tau}(\ell(x,w) - \Phi_{\tau}(\ell(y,\tilde{w}))|^2]\lesssim e^{-c\tau}\ES[|\Phi_{1}(\TCR{\ell(x,w)}) - \Phi_{1}(\TCR{\ell(y,\tilde{w})})|^2] +c_\varepsilon^2 \sum_{k=1}^{\lfloor\tau\rfloor}\rho^{\tau-k} k^{2H-2+\varepsilon}.
 \end{equation*}
 On the one hand, elementary computations show that
 $$\sum_{k=1}^{[\tau]}\rho^{\tau-k} k^{2H-2+\varepsilon}\lesssim \tau^{2H-2+\varepsilon}.$$
On the other hand, one checks  that $\ell_0(x,w)-\ell_0(y,\tilde{w})=x-y$ and 
 $$  \|\dot{\ell}(x,w)-\dot{\ell}(y,\tilde{w})\|_{\infty,[0,1]}\le c_\varepsilon.$$
 Thus, by \TCR{the same argument as in \eqref{eq:taufloortau}},
$$ \ES[|\Phi_{1}(\TCR{\ell(x,w)}) - \Phi_{1}(\TCR{\ell(y,\tilde{w})})|^2]\lesssim  \TCR{|x-y|^2+c_\varepsilon^2}.$$
The first statement follows.\\

\noindent  For the second statement, one remarks that when $w_t=\tilde{w}_t$ on $(-\infty,-1]$, then for every $t\ge1$
\begin{equation}{\label{eq: 2}}
|\dot{\ell}_t(x,w)-\dot{\ell}_t(y,\tilde{w})|\lesssim \|w-\tilde{w}\|_{\infty,[-1,0]}\int_{-1}^0 (t-s)^{H-\frac{5}{2}} ds\lesssim \|w-\tilde{w}\|_{\infty,[-1,0]}  t^{H-\frac{5}{2}}.
\end{equation}
 Then, the same argument as before leads to the result.
\end{proof} 
 \subsection{$TV$-bounds between paths with different pasts}\label{sec:TVbounds}
Now, we want to deduce some total variation bounds from the previous $L^2$-estimates. The idea is to try to stick the paths at time $\tau$ if \ar{the paths} are close at time $\tau-1$; the closeness being measured with the help of the $L^2$-estimates.
\begin{proposition}\label{prop:TVTV} 
 With the notation of \cref{cor:L2bound}, there exists $c>0$ such that for any $\varepsilon\in(0,1)$, the following bound holds for every $w$, $\tilde{w}\in{\cal C}_\varepsilon$ and $x,y\in\ER^d$:
    \begin{equation*}
        \|{\cal L}(\Phi_{\tau}(\ell(x,w)) - {\cal L}(\Phi_{\tau}(\ell(y,\tilde{w})))\|_{{\rm TV}}\lesssim_{\varepsilon}    e^{-c\tau}|x-y|+\TCR{\frC(w-\tilde{w},\epsilon)} \TCR{(1\vee\tau)}^{H-1+\frac{\varepsilon}{2}}.
        \end{equation*}
        Furthermore, if $w$ and $\tilde{w}$ are such that $w_t=\tilde{w}_t$ on $(-\infty,-1]$, then
         \begin{equation*}
        \|{\cal L}(\Phi_{\tau}(\ell(x,w)) - {\cal L}(\Phi_{\tau}(\ell(y,\tilde{w})))\|_{{\rm TV}}\lesssim e^{-c\tau}|x-y|+ \TCR{ \|w-\tilde{w}\|_{\infty,[-1,0]}}\TCR{(1\vee\tau)}^{H-\frac{5}{2}}.
        \end{equation*}

 \end{proposition}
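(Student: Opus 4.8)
The plan is to upgrade the $L^2$-estimates of \cref{cor:L2bound} to total-variation estimates by using that, over any window of length one, the transition kernel of the innovation SDE \eqref{eq:innovation_sde} turns the Euclidean distance between two of its solutions at the left endpoint of the window into a total-variation distance between their laws at the right endpoint; this is the precise content of ``trying to stick the past at time $\tau$ whenever the paths are close at time $\tau-1$''. The one ingredient I need beyond \cref{cor:L2bound} is the following regularization estimate, which is where the non-degeneracy of $\sigma$ enters and which follows from the semigroup density bounds of \cite{LPS22} (alternatively, from a Cameron--Martin argument for the Liouville process \eqref{def:liouville}: writing $Y:=\Phi(\psi)-\psi$, the forcing $\psi$ enters the drift of $Y$ with Lipschitz constant $[b]_{\rm Lip}$, so inverting the fractional-integration map $W\mapsto\tilde{B}$ on the resulting Lipschitz drift correction produces a Cameron--Martin shift of $L^2$-norm $\lesssim\|\psi_1-\psi_2\|_{\infty,[0,1]}$, whence Pinsker's inequality, the residual mean shift $|\psi_1(1)-\psi_2(1)|$ being absorbed by the boundedness of the density of $\Phi_1(\psi_2)$): there is $C>0$ such that
$$\big\|\mathcal{L}(\Phi_1(\psi_1))-\mathcal{L}(\Phi_1(\psi_2))\big\|_{\rm TV}\le C\,\|\psi_1-\psi_2\|_{\infty,[0,1]}\qquad\forall\,\psi_1,\psi_2\in\mathcal{C}([0,1],\R^d),$$
and, more generally, the same bound on $[0,T]$ in place of $[0,1]$ with a constant that stays bounded over $T\in[1,2]$.

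\textbf{The bound for $\tau\ge2$.} Abbreviate $\ell_1:=\ell(x,w)$ and $\ell_2:=\ell(y,\tilde{w})$; these are deterministic paths, of class $\mathcal{C}^1$ on $(0,\infty)$. Restarting \eqref{eq:innovation_sde} at time $\tau-1$ and decomposing the Liouville increment over $[\tau-1,\tau]$ as in \eqref{eq:innovation_process}, one obtains a Liouville process $\tilde{B}'$ independent of $\mathcal{F}_{\tau-1}$ such that, conditionally on $\mathcal{F}_{\tau-1}$,
$$\Phi_\tau(\ell_i)=\Psi_1(\zeta_i),\qquad\Psi_\rho(\zeta):=\zeta(\rho)+\int_0^\rho b(\Psi_v(\zeta))\,dv+\sigma\tilde{B}'_\rho,$$
where $\zeta_i(\rho)=\Phi_{\tau-1}(\ell_i)+\big(\ell_i(\tau-1+\rho)-\ell_i(\tau-1)\big)+\Xi(\rho)$ and $\Xi$ is an $\mathcal{F}_{\tau-1}$-measurable path not depending on $i$ (it gathers the $\mathcal{F}_{\tau-1}$-measurable part of the noise increment). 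Thus $\Xi$ cancels in $\zeta_1-\zeta_2$, and since $\ell_i\in\mathcal{C}^1([\tau-1,\tau])$,
$$\|\zeta_1-\zeta_2\|_{\infty,[0,1]}\le\big|\Phi_{\tau-1}(\ell_1)-\Phi_{\tau-1}(\ell_2)\big|+\|\dot{\ell}_1-\dot{\ell}_2\|_{\infty,[\tau-1,\tau]}.$$
Since $\tilde{B}'$ is independent of the $\mathcal{F}_{\tau-1}$-measurable pair $(\zeta_1,\zeta_2)$, one has $\mathcal{L}(\Phi_\tau(\ell_i))(A)=\ES[\mathbb{P}_{\tilde{B}'}(\Psi_1(\zeta_i)\in A)]$ for every Borel $A$; bounding the integrand by the conditional total-variation distance, then the latter by the regularization estimate, and finally taking the supremum over $A$ gives
$$\big\|\mathcal{L}(\Phi_\tau(\ell_1))-\mathcal{L}(\Phi_\tau(\ell_2))\big\|_{\rm TV}\le C\,\ES\big[\|\zeta_1-\zeta_2\|_{\infty,[0,1]}\big].$$
Plugging in the preceding display, estimating $\ES[|\Phi_{\tau-1}(\ell_1)-\Phi_{\tau-1}(\ell_2)|]\le\ES[|\Phi_{\tau-1}(\ell_1)-\Phi_{\tau-1}(\ell_2)|^2]^{1/2}$ by \cref{cor:L2bound} at time $\tau-1\ge1$, and $\|\dot{\ell}_1-\dot{\ell}_2\|_{\infty,[\tau-1,\tau]}$ by \eqref{eq: 1} (resp. \eqref{eq: 2} for the second statement), the square root turns $\tau^{2H-2+\varepsilon}$ into $\tau^{H-1+\varepsilon/2}$ (resp. $\tau^{2H-5}$ into $\tau^{H-5/2}$) and $e^{-c\tau}$ into $e^{-c\tau/2}$; using $\tau-1\asymp\tau$ and $e^{-c(\tau-1)/2}\asymp e^{-c\tau/2}$ for $\tau\ge2$ yields exactly the asserted bound (after renaming $c$).

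\textbf{The range $\tau\in[1,2)$ and the main difficulty.} For $\tau\in[1,2)$ every exponential and polynomial factor on the right-hand side is bounded above and below by an absolute positive constant, so it is enough to show the total-variation distance is $\lesssim_\varepsilon\frC(w-\tilde{w},\epsilon)(1+|x-y|)$ (resp. $\lesssim\|w-\tilde{w}\|_{\infty,[-1,0]}(1+|x-y|)$ in the second case); I would obtain this by applying the regularization estimate on $[0,\tau]$ directly to $\psi_i:=\ell_i\restriction_{[0,\tau]}$ and using $\|\ell_1-\ell_2\|_{\infty,[0,2]}\le|x-y|+\sup_{t\in[0,2]}|\ell_t(0,w)-\ell_t(0,\tilde{w})|$, the last supremum being $\lesssim_\varepsilon\frC(w-\tilde{w},\epsilon)$ (resp. $\lesssim\|w-\tilde{w}\|_{\infty,[-1,0]}$ when $w=\tilde{w}$ off $[-1,0]$) by the same bounds on the memory operator used in the proof of \cref{cor:L2bound}. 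The main difficulty is the regularization estimate itself: it is the only place where the non-degeneracy of $\sigma$ and the precise structure of the Liouville noise \eqref{def:liouville} are used, and making the Cameron--Martin / fractional-inversion argument fully rigorous --- in particular for $H<\tfrac12$, where $W\mapsto\tilde{B}$ is a genuine fractional integration whose inverse consumes exactly the Lipschitz regularity of the drift correction that one has available --- is the delicate point; the remaining steps (isolating the fresh noise via Mandelbrot--van Ness, the ``pull the supremum inside the expectation'' manipulation, and Cauchy--Schwarz to pass from $L^2$ to $L^1$) are routine bookkeeping.
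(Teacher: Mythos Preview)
Your overall architecture --- feed \cref{cor:L2bound} at time $\tau-1$ into a one-step total-variation smoothing over $[\tau-1,\tau]$ obtained by Girsanov/Pinsker on the driving Brownian motion --- is exactly the paper's. The paper likewise writes $\tilde\Phi_t-\Phi_t=\rho_t$ on a unit window, shifts $W$ by $\int\psi^\tau$ so that $\rho_1=0$, and bounds $\|\psi^\tau\|_{L^2}$; what you package as the ``regularization estimate'' is precisely this coupling step.

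The difference, and the place your sketch has a genuine gap, is in how the shift $\varphi$ (on the Liouville level) is chosen. Your Cameron--Martin argument takes $\varphi(s)=\sigma^{-1}[b(Y_s+\psi_1(s))-b(Y_s+\psi_2(s))]$ and then handles the leftover deterministic shift $\psi_1(1)-\psi_2(1)$ via density bounds. Two problems: the residual-shift step needs $\int|\nabla_y p_1(\psi_2;y)|\,dy<\infty$, not merely $\|p_1\|_\infty<\infty$, which does not follow from the upper bound in \cite{LPS22}; and, more seriously, for $H>\tfrac12$ (you have the direction of difficulty reversed --- it is here, not for $H<\tfrac12$, that the inverse of $W\mapsto\tilde B$ consumes regularity) the Brownian shift is $g=\gamma_H\frac{d}{dt}\int_0^t(t-u)^{1/2-H}\varphi(u)\,du$, a fractional derivative of order $H-\tfrac12$. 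Your $\varphi$ is only $(H-)$-H\"older through the random path $Y$, and its H\"older seminorm is \emph{not} proportional to $\|\psi_1-\psi_2\|_\infty$ when $b$ is merely Lipschitz (only its sup norm is), so the claimed bound $\|g\|_{L^2}\lesssim\|\psi_1-\psi_2\|_\infty$ does not follow.

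The paper bypasses both issues by a different choice of $\varphi$: it solves an explicit control ODE (\cref{lem:appenA}) driving $\rho$ to zero exactly at time $1$, producing a control with $\|\varphi\|_\infty+\|\dot\varphi\|_\infty\lesssim|\rho_0|+\|\dot\varsigma\|_\infty+\|\ddot\varsigma\|_\infty$, where $\varsigma$ is the memory increment $\ell_1-\ell_2$ over the window. The $C^1$ bound on $\varphi$ is exactly what the fractional inversion needs for $H>\tfrac12$, and since $\rho_1=0$ there is no residual shift to absorb. In your application $\|\dot\varsigma\|_\infty$ and $\|\ddot\varsigma\|_\infty$ are controlled by \eqref{eq: 1}--\eqref{eq: 2} just as you use them, so the final bound is identical; the control ODE is the missing ingredient that makes your ``delicate point'' rigorous.
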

\fab{The proof of \cref{prop:TVTV} is postponed to \cref{s: tech}.}

\section{The martingale decomposition}\label{s: mtg decomposition}
The bounds established in \cref{sec:TVbounds} will now be used to control the variance of $\frac{1}{T} \int_0^T F(X_s) ds$ for a given functional $F$. Here, as in \cite{adaptive}, we use a martingale decomposition of this random variable to eliminate covariance terms. \fab{More precisely, setting  $F_T=\int_0^T F(X_s)ds$, we write
\begin{equation}\label{eq:martdecomp429}
 F_T-\ES[F_T]=F_T-\ES[F_T|{\cal F}_{\lfloor T\rfloor}]+\sum_{k=1}^{\lfloor T\rfloor} \ES[F_T|{\cal F}_k]-\ES[F_T|{\cal F}_{k-1}]+\ES[F_T|{\cal F}_{0}]-\ES[F_T].
 \end{equation}
Using that all these terms are (by construction) centered and orthogonal and studying their variance separately, we get the following proposition (whose proof is postponed to \cref{sec:martproof}):}
\begin{proposition}\label{prop: mixing}
    Assume \ref{hyp1} and let $F\in L^\infty(\R^d)$. Then,
  \begin{equation*}
       {\rm Var}\lp\frac{1}{T} \int_0^T F(X_s) ds \rp  \le \|F\|_\infty^2   \begin{cases} c T^{-1}  & \textnormal{if $H<1/2$}\\
        c_\varepsilon T^{2H-2+\varepsilon}&\textnormal{if $H>1/2$},
       \end{cases}
    \end{equation*}
    where $\varepsilon\in(0,1)$ and $c$, $c_\varepsilon$ denote some constants independent of $T$ and $F$.
\end{proposition}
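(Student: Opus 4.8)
The plan is to establish the variance bound through a martingale decomposition along unit-time blocks, exactly in the spirit of \cite{adaptive}, but using the total variation bounds of \cref{prop:TVTV} (with the supremum norm in place of the Lipschitz norm) to control the increments.

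\textbf{Step 1: Block decomposition and the martingale.} Write $T=N$ (an integer, up to a negligible boundary term) and split $\frac1T\int_0^T F(X_s)\,ds=\frac1N\sum_{k=0}^{N-1}Y_k$ with $Y_k=\int_k^{k+1}F(X_s)\,ds$, which satisfies $|Y_k|\le\|F\|_\infty$. Relative to the filtration $(\mathcal{F}_k)_{k\in\N}$, set $M_k:=\ES[\,\sum_{j\ge k}(Y_j-\ES[Y_j\mid\mathcal{F}_{j}])\,\cdot\,]$—more concretely, following \cite{adaptive}, define the centered increments $Z_k:=\sum_{j\ge k}\big(\ES[Y_j\mid\mathcal{F}_k]-\ES[Y_j\mid\mathcal{F}_{k-1}]\big)$ so that $\sum_{k}(Y_k-\ES Y_k)=\sum_k Z_k$ is a sum of martingale differences; here one must first check that the tail sum converges, which follows from Step 2. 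Then $Var\big(\frac1N\sum_k Y_k\big)=\frac1{N^2}\sum_k\ES[Z_k^2]$ by orthogonality of martingale increments, so it suffices to show $\ES[Z_k^2]\lesssim\|F\|_\infty^2$ for $H<1/2$ and $\ES[Z_k^2]\lesssim_\varepsilon\|F\|_\infty^2 N^{2H-1+\varepsilon}$ for $H>1/2$, uniformly in $k$; summing over the $N$ values of $k$ and dividing by $N^2$ yields the claim (for $H>1/2$ one gets $N^{-2}\cdot N\cdot N^{2H-1+\varepsilon}=N^{2H-2+\varepsilon}$).

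\textbf{Step 2: Bounding a single increment via \cref{prop:TVTV}.} Fix $j\ge k$ and write $j=k+\tau$ with $\tau\ge0$. Using the semi-Markov identity \eqref{eq:semimarkov}, $X_{j+s}=\Phi_{\tau+s}(X_k+\sigma\bar B^k)$, and $X_k=\Phi$ of the past, the difference $\ES[Y_j\mid\mathcal{F}_k]-\ES[Y_j\mid\mathcal{F}_{k-1}]$ compares the law of $\Phi_\tau$ started from two generalized initial conditions that agree except on the time-window $[k-1,k]$ of the driving Brownian motion—precisely the situation covered by the second (``$a=1$'') statement of \cref{prop:TVTV}, after integrating $F$ against the difference of the two laws: $|\ES[Y_j\mid\mathcal{F}_k]-\ES[Y_j\mid\mathcal{F}_{k-1}]|\le 2\|F\|_\infty\,\|\mathcal{L}(\cdot)-\mathcal{L}(\cdot)\|_{\mathrm{TV}}\lesssim\|F\|_\infty\big(e^{-c\tau}\,\ES|\Delta_k|+\tau^{H-5/2}\big)$, where $\Delta_k$ is the (random) discrepancy of the two starting points at time $k$ and $\|w-\tilde w\|_{\infty,[-1,0]}$ is replaced by a random variable with moments controlled uniformly in $k$ (this uses stationarity and the Gaussian bounds, so that $\ES[\|\cdot\|^2]$ and $\ES|\Delta_k|$ are finite constants). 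Summing the geometric-plus-polynomial tail over $\tau=j-k\ge1$ gives $\ES[Z_k^2]^{1/2}\lesssim\|F\|_\infty\sum_{\tau\ge0}\big(e^{-c\tau}+\tau^{H-5/2}\big)\lesssim\|F\|_\infty$ when $H<1/2$ (the series converges since $H-5/2<-1$), and one uses instead the first (``$a=\infty$'') statement for $H>1/2$, where the tail is $\tau^{H-1+\varepsilon/2}$; there the series diverges, so one truncates the sum at $\tau\sim N$ to obtain the extra factor $N^{2H-1+\varepsilon}$ (being careful that the number of blocks $j\ge k$ is at most $N$). The conditional-expectation differences must be made rigorous by expressing them as an expectation over the fresh randomness on $[k-1,k]$ against the (random but $\mathcal{F}_{k-1}$-measurable up to that window) past, then invoking \cref{prop:TVTV} pathwise and taking expectations.

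\textbf{Main obstacle.} The delicate point is the passage from the deterministic-past bounds of \cref{prop:TVTV} to a bound on the \emph{conditional} expectations $\ES[Y_j\mid\mathcal{F}_k]-\ES[Y_j\mid\mathcal{F}_{k-1}]$: one has to set up the right coupling so that the two conditional laws are exactly laws of $\Phi_\tau$ with pasts differing only on $[k-1,k]$, control the random ``initial discrepancy'' $\ES|\Delta_k|$ and the random prefactor $\|w-\tilde w\|_{\infty,[-1,0]}$ uniformly in $k$ (using stationarity and \cref{prop:li_sieber}), and handle the $H>1/2$ case where the polynomial tail is not summable, forcing the $N$-dependent truncation that produces the $T^{2H-2+\varepsilon}$ rate. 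The boundary block $[N,T]$ and the convergence of the tail series defining $Z_k$ are routine once these estimates are in place.
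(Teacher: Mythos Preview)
There is a genuine gap in your decomposition. Your identity $\sum_{k}(Y_k-\ES Y_k)=\sum_k Z_k$ is false when the sum over $k$ runs over the $N$ blocks: the right-hand side telescopes to $\ES[F_T\mid\mathcal{F}_N]-\ES[F_T\mid\mathcal{F}_0]=F_T-\ES[F_T\mid\mathcal{F}_0]$, whereas the left-hand side is $F_T-\ES[F_T]$. In this fractional setting $\mathcal{F}_0$ is the $\sigma$-algebra generated by the \emph{entire past} $(W_t)_{t\le 0}$ of the two-sided Brownian motion, so $\ES[F_T\mid\mathcal{F}_0]\neq\ES[F_T]$ and the missing initial term $\ES[F_T\mid\mathcal{F}_0]-\ES[F_T]$ is far from negligible. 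The paper keeps exactly this term explicitly (see \eqref{eq: var split}--\eqref{eq:int}), and it is precisely this term that produces the rate $T^{2H-2+\varepsilon}$ when $H>1/2$.

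This also means your diagnosis of the $H>1/2$ case is wrong. For every martingale increment $Z_k$ the two conditional laws differ only through the window $[k-1,k]$ of the driving noise, so the second (``$a=1$'') statement of \cref{prop:TVTV} applies for \emph{all} $H\in(0,1)$, with tail $\tau^{H-5/2}$; since $H-5/2<-1$ always, the series converges and one obtains $\ES[Z_k^2]\lesssim\|F\|_\infty^2$ uniformly in $k$ and in $H$. There is no reason to switch to the ``$a=\infty$'' bound for these increments. The place where the $a=\infty$ statement is unavoidable is the initial term: there one compares $\ES[F(\Phi_\tau(\ell(X_0,W^-)))\mid W^-]$ with its average over an independent copy $(y,w^-)\sim\Pi$, and the two pasts $W^-$ and $w^-$ differ on the whole of $(-\infty,0]$. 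Integrating $\tau^{H-1+\varepsilon/2}$ over $[0,T]$ then yields the factor $T^{H+\varepsilon/2}$, hence $\ES\big[(\ES[F_T\mid\mathcal{F}_0]-\ES[F_T])^2\big]\lesssim_\varepsilon\|F\|_\infty^2\,T^{2H+\varepsilon}$, which after division by $T^2$ dominates the martingale contribution $T^{-1}$ precisely when $H>1/2$. Once you restore this initial term and treat the two parts with the appropriate halves of \cref{prop:TVTV}, the argument goes through essentially as in the paper.
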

\begin{remark}\label{rem:optimality}
\TCR{When $(X_t)_{t\ge0}$ is a fractional Ornstein-Uhlenbeck process and $F(x)=x$, one can check that the rates in $T$ can be retrieved with another approach using directly the covariance. Actually, in this case (see  \cite[Theorem 2.3]{cheridito}),
\begin{equation}\label{eq:covxsxtrem}
 {\rm Cov}(X_t,X_{t+s})=H(2H-1)  s^{2H-2}+O(s^{2H-4}), \quad s\ge1,
 \end{equation}
which allows to compute the variance with the decomposition:
$${\rm Var}\lp\frac{1}{T} \int_0^T F(X_s) ds \right) =\frac{1}{T^2}\int_0^T \int_0^T {\rm Cov}(F(X_s),F(X_u)) du ds.$$
This leads to
$${\rm Var}\lp\frac{1}{T} \int_0^T X_s ds\right)= O(T^{-(1\vee (2-2H)}).$$
Note that this ``double-integral'' decomposition is not usable in the general case but that the martingale decomposition combined with the ``conditional contraction bounds'' of \cref{prop:TVTV} allows to retrieve the same rates (for any bounded measurable function) up to an $\varepsilon$ when $H>1/2$. 
}


\end{remark}

{\cref{prop: mixing} holds for any bounded measurable functional $F$. Now, we want to use some specific properties of the kernel. In particular,} a classical method to improve the bounds and especially the dependence in \TCR{$h_1, ... , h_d$} is to take advantage of the fact that 
\begin{equation*}{\label{eq: 20,25}}
\int \mathbb{K}_h(x-y)\lambda_d(dy)=1.
\end{equation*}
To be exploited, this property must be combined with some bounds on the density of the semi-group. \tcr{In the specific setting of \eqref{eq:innovation_sde} (\textit{i.e.} of conditional fractional SDEs)}, some results have been recently obtained in \cite{LPS22}.  

First, by \cite[Lemma 3.6]{LPS22}, \eqref{eq:innovation_sde} has a weak solution as soon as $b$ is Borel measurable with at most linear growth when $H<1/2$ or $b$ is $\alpha$-H\"older with $\alpha>1-(2H)^{-1}$ when $H>1/2$. When $\sigma$ is invertible, this equation admits a density with respect to the Lebesgue measure on $\ER^d$, \textit{i.e.}, for any $t>0$, for any Borel set $A$ of $\ER^d$,
$$\PE(\Phi_t(\ell)\in A)=\int_A p_t(\ell;y)\lambda_d(dy).$$
\tcr{Furthermore, by \cite[Proposition 4.7]{LPS22},  $p_t(\ell;y)$ admits Gaussian upper-bounds (see \eqref{eq:upper_bound_conditional_density}) that we are going to exploit to obtain the next lemma.}


\begin{lemma}\label{lem:espcondkh}  For any $(s,t)\in\ER_+^2$ such that $0\le s<t$,
$$\TCR{|}\ES[\mathbb{K}_h(x-X_t)|{\cal F}_s]\TCR{|}\le \frac{\Xi_{s,t}}{(t-s)^{dH}\vee 1},$$
where $\Xi_{s,t}$ is a ${\cal F}_s$-random variable such that for any $p\ge1$,
$$\sup_{s,t\ge0, 0\le s\le t}\ES[|\Xi_{s,t}|^p]<+\infty.$$
\end{lemma}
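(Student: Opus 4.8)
The plan is to use the semimarkov decomposition \eqref{eq:semimarkov} to reduce the conditional expectation to an unconditional integral against the conditional density $p_{t-s}(\ell;\cdot)$ of the innovation SDE \eqref{eq:innovation_sde}, and then to exploit the Gaussian upper bound on $p_{t-s}$ cited from \cite[Proposition 4.7]{LPS22} together with the normalization $\|\mathbb{K}_h\|_1 = 1$. Concretely, first I would write, using \eqref{eq:semimarkov}, that $X_t = \Phi_{t-s}(X_s + \sigma\bar{B}^s)$, where conditionally on $\mathcal{F}_s$ the random shift $\ell := X_s + \sigma\bar{B}^s$ is $\mathcal{F}_s$-measurable in its ``past'' part while the driving Liouville process $\tilde{B}$ in \eqref{eq:innovation_sde} is independent of $\mathcal{F}_s$. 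Hence
\begin{equation*}
\ES[\mathbb{K}_h(x-X_t)\mid\mathcal{F}_s] = \int_{\R^d}\mathbb{K}_h(x-y)\,p_{t-s}(\ell;y)\,\lambda_d(dy)\Big|_{\ell = X_s+\sigma\bar B^s}.
\end{equation*}

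Next I would invoke the Gaussian upper bound for the conditional density: by \cite[Proposition 4.7]{LPS22} there are constants $c_1,c_2>0$ (and an $\mathcal{F}_s$-measurable ``centering'' $m_{s,t}(\ell)\in\R^d$ coming from the deterministic drift flow) such that
\begin{equation*}
p_{t-s}(\ell;y) \le \frac{C}{\big((t-s)^{H}\vee 1\big)^{d}}\,\Theta_{s,t}\,\exp\!\Big(-c_2\frac{|y-m_{s,t}(\ell)|^2}{(t-s)^{2H}\vee 1}\Big),
\end{equation*}
where $\Theta_{s,t}$ absorbs the random prefactors appearing in that bound (these are controlled by moments of the fractional/Liouville noise and of $X_s$, all of which have moments of every order under \ref{hyp1} in the stationary regime). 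Plugging this in and using $\|\mathbb{K}_h\|_1=1$ together with the fact that the Gaussian-type factor is bounded by $1$, one gets
\begin{equation*}
\ES[\mathbb{K}_h(x-X_t)\mid\mathcal{F}_s] \le \frac{C\,\Theta_{s,t}}{(t-s)^{dH}\vee 1}\int_{\R^d}\mathbb{K}_h(x-y)\,\lambda_d(dy) = \frac{C\,\Theta_{s,t}}{(t-s)^{dH}\vee 1},
\end{equation*}
which is the claimed form with $\Xi_{s,t} := C\,\Theta_{s,t}$. (If one wants a sharper control exploiting the Gaussian decay against $\mathbb{K}_h$'s support in $[-h,h]^d$ one could, but for this lemma the crude bound suffices.) Finally I would check the uniform moment bound $\sup_{0\le s\le t}\ES[|\Xi_{s,t}|^p]<\infty$: this follows because the random quantities entering $\Theta_{s,t}$ are (up to the translation $\bar B^s \overset{d}{=}\bar B^0$ and stationarity of $X_s$) distributed independently of $s$ and $t$ — or at worst dominated by such, uniformly — and have finite moments of all orders, by the same $L^p$-estimates on $\Phi$ and on the fBm increments used, e.g., in Corollary~\ref{cor:L2bound} and Proposition~\ref{prop: mixing}.

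The main obstacle I anticipate is bookkeeping the exact shape of the Gaussian bound from \cite[Proposition 4.7]{LPS22}: one must make sure that (i) the time-dependent denominator genuinely produces the factor $(t-s)^{dH}\vee 1$ (rather than something worse when $t-s$ is small, where the $\vee 1$ is what saves us), and (ii) all the random prefactors in that bound, evaluated at the $\mathcal{F}_s$-measurable past $\ell = X_s + \sigma\bar B^s$, can be organized into a single $\mathcal{F}_s$-measurable variable $\Xi_{s,t}$ whose $L^p$-norms are bounded \emph{uniformly} in $s,t$. Point (ii) is where stationarity of $X$ and the identity $\bar B^s\overset{d}{=}\bar B^0$ (noted after \eqref{eq:innovation_process}) are essential; once those are in hand the moment bound is routine.
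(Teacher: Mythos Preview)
Your overall strategy---semimarkov decomposition \eqref{eq:semimarkov}, integration against the conditional density $p_{t-s}$, the Gaussian upper bound from \cite[Proposition~4.7]{LPS22}, and $\|\mathbb{K}_h\|_1=1$---matches the paper's. But two concrete points are glossed over, and your ``main obstacle~(i)'' misidentifies where the difficulty lies.

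First, the bound from \cite[Proposition~4.7]{LPS22} is a \emph{short-time} bound: it holds only for $\tau=t-s\in(0,T_0]$ for some fixed $T_0>0$, not for all $\tau$ with the denominator you wrote. So the issue is not small $t-s$ but large $t-s$: one cannot get the $\vee\,1$ directly from the density estimate. The paper handles $t-s\ge T_0$ separately via the tower property,
\[
\ES[\mathbb{K}_h(x-X_t)\mid\mathcal{F}_s]=\ES\big[\ES[\mathbb{K}_h(x-X_t)\mid\mathcal{F}_{t-T_0}]\,\big|\,\mathcal{F}_s\big]\le \frac{1}{T_0^{dH}}\,\ES[\Xi_{t-T_0}^{(1)}\mid\mathcal{F}_s],
\]
and then shows this conditional expectation has uniform moments. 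Your proposal skips this step.

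Second, the prefactor in the cited bound is $\exp\!\big(C(1+\tau^\eta\vertiii{\ell}_{\mathcal{C}^\gamma}^2)\big)$ with $\ell=X_s+\sigma\bar B^s$, i.e.\ it is exponential in $|X_s|^2$ and in $\vertiii{\bar B^s}_{\mathcal{C}^\gamma}^2$. Saying that $X_s$ and $\bar B^s$ have ``moments of every order'' is not enough; one needs \emph{exponential} integrability. The paper invokes Fernique's theorem for $\vertiii{\bar B^s}_{\mathcal{C}^\gamma}^2$ and the Gaussian tail \eqref{eq:gaussian_bounds} on $\pi$ for $|X_s|^2$, and crucially shrinks $T_0$ further so that the coefficient $C\tau^\eta$ falls below the integrability thresholds. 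Only then does $\Xi_{s,t}$ have finite $L^p$-norms. Once those two points are addressed your argument is the paper's.
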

\begin{proof}
By \eqref{eq:semimarkov}, for any $0\le s<t$, 
\begin{equation*}    
X_{t}=\Phi_{t-s}(X_s+\sigma \bar{B}^s).
\end{equation*}
Thus,
\begin{equation}{\label{eq: 20.5}}
 \ES[\mathbb{K}_h(x-X_t)|{\cal F}_s]=\int \mathbb{K}_h(x-y) p_{t-s}(X_s+\sigma \bar{B}^s;y)\lambda_d(dy).   
\end{equation}
By \cite[Proposition 4.7]{LPS22}, there exist $T_0>0$ and constants $c, C,\eta>0$ such that, for each $\ell\in\Cloc^{H-}\big(\R_+,\R^d\big)$, the density of $\Phi_\tau(\ell)$ admits the upper bound 
    \begin{equation}\label{eq:upper_bound_conditional_density}
      p_\tau(\ell;y)\leq \frac{e^{C(1+\tau^\eta\vertiii{\ell}_{\C^\gamma}^2)}}{\tau^{dH}}\exp\bigg(-\frac{c\big|y-\ell(\tau)\big|^2}{\tau^{2H}}\bigg) \qquad\forall\,y\in\R^d,
    \end{equation}
    for all $\tau\in(0, T_0]$ and $\gamma\in(0,H)$ with 
    $$\vertiii{\ell}_{\C^\gamma}=\|\ell\|_{\infty,[0,T_0]}+\sup_{0\le s<t\le T_0}\frac{|\ell(t)-\ell(s)|}{|t-s|^\gamma}.$$
Thus, if $0\le t-s\le T_0$, the above bound leads to 
\begin{equation}{\label{eq: bound density}}
 \sup_{y\in\ER^d} p_{t-s}(X_s+\sigma \bar{B}^s;y)\le \frac{1}{(t-s)^{dH}}e^{C\left(1+2(t-s)^\eta\left(|X_s|^2+\bar{\sigma}\vertiii{\bar{B}^s}_{\C^\gamma}^2\right)\right)},   
\end{equation}
with $\bar{\sigma}={\rm sup}_{|x|\le 1} |\sigma x|$. Hence, 
from \eqref{eq: 20.5} and \eqref{eq: bound density} we obtain
\begin{align*}
\TCR{|} {\ES[\mathbb{K}_h(x-X_t)|{\cal F}_s]} \TCR{|}& {\le \int_{\R^d} \TCR{|}\mathbb{K}_h(x-y)\TCR{|} \frac{1}{(t-s)^{dH}}e^{C\left(1+2(t-s)^\eta\left(|X_s|^2+\bar{\sigma}\vertiii{\bar{B}^s}_{\C^\gamma}^2\right)\right)}\lambda_d(dy)}\\
 & {\le  \frac{\TCR{\|K\|_\infty}}{(t-s)^{dH}}e^{C\left(1+2T_0^\eta\left(|X_s|^2+\bar{\sigma}\vertiii{\bar{B}^s}_{\C^\gamma}^2\right)\right)} =: \frac{\Xi_{s}^{(1)}}{(t-s)^{dH}}},   
\end{align*}
\TCR{where, in the last line, we used that,
$$\int |\mathbb{K}_h(x-y)|\lambda_d(dy)= \int |K(x-y)|\lambda(dy)\le \|K\|_\infty.$$}
\tcr{Moreover, $\Xi_{s}^{(1)}$ is an $\mathcal{F}_s$-random variable whose distribution does not depend on $s$ and has finite moments of any order for $T_0$ small enough (as shown below). Indeed,} noting that
$$\bar{B}^s\overset{d}{=}\bar{B}^0,$$
we deduce from Fernique's Theorem that, for any $\gamma\in(0,H)$, there exists $\lambda_1>0$ such that
$$\ES[e^{\lambda_1\bar{\sigma}\vertiii{\bar{B}^s}_{\C^\gamma}^2}]<+\infty.$$
Furthermore, by \cref{prop:li_sieber} and the fact that the process is assumed to be stationary, there also exists $\lambda_2>0$
such that 
$$ \ES[e^{\lambda_2 |X_u|^2}]=\int e^{\lambda_2 |x|^2}\pi(dx)<+\infty.$$
Thus, at the price of replacing $T_0$ by $\tilde{T}_0=\sup\{u\ge0: 4Cu^\eta\le \lambda_1\quad\textnormal{and} \quad 4C\bar{\sigma} u^\eta\le \lambda_2\}$, we deduce from Cauchy-Schwarz inequality that for any $0\le s\le t$ such that $t-s\le T_0$,
$$
\ES[\mathbb{K}_h(x-X_t)|{\cal F}_s]\le \frac{\Xi^{(1)}_s}{(t-s)^{dH}}.
$$
where $\Xi^{(1)}_s$ satisfies the announced properties. When $t-s\ge T_0$, we can apply the tower property for conditional expectations:
$${ \ES[\ES[\mathbb{K}_h(x-X_t)|{\cal F}_{t-T_0}]|{\cal F}_s]\le \frac{C}{T_0^{dH}} \ES[\Xi^{(1)}_{t- T_0}|{\cal F}_s] =: \frac{\Xi^{(2)}_{s,t}}{T_0^{dH}}}.$$
{This is again $\mathcal{F}_s$-measurable by construction and it has bounded moments of any order. Indeed, for any $p$, }
$${\E[|\Xi^{(2)}_{s,t}|^p] = \E[|\ES[\Xi^{(1)}_{t- T_0}|{\cal F}_s]|^p] \le c \E[\ES[|\Xi^{(1)}_{t- T_0}|^p|{\cal F}_s]] = c \ES[|\Xi^{(1)}_{t- T_0}|^p]}$$
{which involves that the moments of $\Xi^{(2)}_{s,t}$ are uniformly bounded since the distribution of $\Xi^{(1)}_s$ does not depend on $s$ and have moment of any order}. Finally, the result is true by setting $\Xi_{s,t} := \Xi_s^{(1)} + \Xi_{s,t}^{(2)}$. This concludes the proof.
\end{proof}

\subsection{Proof of \cref{th:bound2}}\label{sec:prooftheo33}
We start with the  decomposition \eqref{eq: var split} of the proof of \TCR{\cref{prop: mixing}}, but we choose to decompose each term as follows. 
When $F=\mathbb{K}_h$,
$$\ES[F_T|{\cal F}_0]-\ES_{\tcr{\Pi}}[F_T]=\int_0^T \left(\ES[\mathbb{K}_h(x-X_t)|{\cal F}_0]-\pi(\mathbb{K}_h)\right) dt.$$
We have three different bounds for $|\ES[\mathbb{K}_h(x-X_t)|{\cal F}_0]-\pi(\mathbb{K}_h)|$. Using that $\pi(\mathbb{K}_h)\le \|\pi\|_\infty\lesssim 1$ (by \cref{lem:stationary_density_bounded} \tcr{in \cref{s: tech}}), a first trivial bound is:
$$|\ES[\mathbb{K}_h(x-X_t)|{\cal F}_0]-\pi(\mathbb{K}_h)|\le \TCR{(\prod_{l = 1}^d h_l)^{-1}} \|K\|_\infty\TCR{+\|\pi\|_\infty\lesssim (\prod_{l = 1}^d h_l)^{-1}}.$$
The second and third bounds come respectively from \cref{lem:espcondkh}  and \eqref{TVlongtimes;;}. These three bounds lead to 
\bqn
|\ES[\mathbb{K}_h(x-X_t)|{\cal F}_0]-\pi(\mathbb{K}_h)|\lesssim \min\left\{ \TCR{(\prod_{l = 1}^d h_l)^{-1}}, \frac{\Xi_{0,t}}{t^{dH}\vee 1}, 
\TCR{(\prod_{l = 1}^d h_l)^{-1}} \hat{\Xi}\left(\check{\Xi} e^{-ct}+(1+t)^{H-1+\frac{\varepsilon}{2}}\right)\right\}.
\eqn 
with $\tcr{\Xi_{0,t}}$ is defined in \cref{lem:espcondkh}, $\hat{\Xi}=\int \frC(W^{-}-{w^{-}},\epsilon)\PE_{W^{-}}(dw^{-})$ and $\check{\Xi}=\int |X_0-y|\pi(dy)$.
We thus use the first bound on $[0, \TCR{(\prod_{l = 1}^d h_l)^{\frac{1}{dH}}}]$, the second one on $[\TCR{(\prod_{l = 1}^d h_l)^{\frac{1}{dH}}}, t_0]$ where $t_0\in[1,T]$ will be calibrated later, and the third one on $[t_0, T]$. We have
\begin{align*}
\left|\int_0^T \ES[\mathbb{K}_h(x-X_t)|{\cal F}_0]-\pi(\mathbb{K}_h) dt\right|& \lesssim 
\tcr{\Xi_{0,t}}\left(\TCR{(\prod_{l = 1}^d h_l)^{-1 + \frac{1}{dH}}}+  t_0\right)\\
&+\int_{t_0}^T \TCR{(\prod_{l = 1}^d h_l)^{-1}}\hat{\Xi}\left(\check{\Xi} e^{-ct}+(1+t)^{H-1+\frac{\varepsilon}{2}}\right) dt.
\end{align*}
{One checks that
\begin{align*}
\ES\left(\int_{t_0}^T \TCR{(\prod_{l = 1}^d h_l)^{-1}} \hat{\Xi}\left(\check{\Xi} e^{-ct}+(1+t)^{H-1+\frac{\varepsilon}{2}}\right) dt\right)^2&\lesssim
 \TCR{(\prod_{l = 1}^d h_l)^{-2}}\left(e^{-2c t_0}+T^{2H+\varepsilon}\right)\\
&\lesssim  \TCR{(\prod_{l = 1}^d h_l)^{-2}} T^{2H+2\varepsilon}.
\end{align*}
Moreover, by \cref{lem:espcondkh}, $\tcr{\Xi_{0,t}}$ has uniformly bounded moments. Hence,
\begin{align*}
\ES\left(\frac{1}{T}\left(\ES[F_T|{\cal F}_0]-\ES_{\Pi}[F_T]\right)\right)^2&\lesssim \frac{\TCR{(\prod_{l = 1}^d h_l)^{-2 + \frac{2}{dH}}}}{T^2}+\frac{ t_0^2}{T^2}+\frac{ \TCR{(\prod_{l = 1}^d h_l)^{-2}} T^{2H+2\varepsilon}}{\ar{T^{2}}}.
\end{align*}
Since the bound does not depend on $t_0$, we choose $t_0=1$ and get
\begin{equation}\label{eq:bornehhTT}
\ES\left(\frac{1}{T}\left(\ES[F_T|{\cal F}_0]-\ES_{\tcr{\Pi}}[F_T]\right)\right)^2\lesssim_{\varepsilon} \frac{\TCR{(\prod_{l = 1}^d h_l)^{-2}}}{\ar{T^{2}}}\min\left\{\TCR{(\prod_{l = 1}^d h_l)^{\frac{2}{Hd}}},T^{2H+\varepsilon}\right\}.
\end{equation}
Note that, without loss of generality, we replaced $\varepsilon$ by $\varepsilon/2$.
}


Let us now consider the other terms of \eqref{eq: var split}. First, for every $k\in \{ 1, ... , T\}$, 
\begin{equation}\label{eq:part24409}
\ES[F_T|{\cal F}_k]-\ES[F_T|{\cal F}_{k-1}]=\int_{k-1}^T \left(\ES[\mathbb{K}_h(x-X_t)|{\cal F}_k]-\ES[\mathbb{K}_h(x-X_t)|{\cal F}_{k-1}]\right) dt.
\end{equation}
\tcr{We split the integral into two parts. For $t\in(k,T)$,
\begin{align*}
&|\ES[\mathbb{K}_h(x-X_t)|{\cal F}_k]-\ES[\mathbb{K}_h(x-X_t)|{\cal F}_{k-1}]|\\
&\quad \lesssim \min\left\{ \TCR{(\prod_{l = 1}^d h_l)^{-1}}, \tcr{\frac{\Xi_{k,t}+\Xi_{k-1,t}}{(t-k)^{dH}\vee 1}}, 
\TCR{(\prod_{l = 1}^d h_l)^{-1}} \tilde{\Xi}_k\left(\bar{\Xi}_k e^{-c(t-k)}+(t-k)^{H-\frac{5}{2}}\right)\right\}.
\end{align*}
with $\Xi_{s,t}$ is defined in \cref{lem:espcondkh} and $\tilde{\Xi}_k$ and 
$\bar{\Xi}_k$ have uniformly bounded moments (using a  similar control as in \eqref{eq:aveck??}). We thus use the first bound on $[k, k+ \TCR{(\prod_{l = 1}^d h_l)^{\frac{1}{dH}}}]$, the second one on $[k+\TCR{(\prod_{l = 1}^d h_l)^{\frac{1}{dH}}}, k+{t}_{k}]$ where $t_{k}\in[h^{\frac{1}{H}},T-k]$ will be calibrated below. We have
\begin{align*}
\int_{k}^T&\left|\ES[\mathbb{K}_h(x-X_t)|{\cal F}_k]-\ES[\mathbb{K}_h(x-X_t)|{\cal F}_{k-1}] \right| dt\\
& \lesssim 
(\Xi_{k,t}+\Xi_{k-1,t})(\TCR{(\prod_{l = 1}^d h_l)^{-1 + \frac{1}{dH}}}+   t_k)+\int_{t_k}^{T-k} \TCR{(\prod_{l = 1}^d h_l)^{-1}}\hat{\Xi}_k\left(\check{\Xi}_k e^{-ct}+t^{H-\frac{5}{2}}\right) dt.
\end{align*}
Thus, 
\begin{align}
\ES&\left(\int_{k}^T \left(\ES[\mathbb{K}_h(x-X_t)|{\cal F}_k]-\ES[\mathbb{K}_h(x-X_t)|{\cal F}_{k-1}]\right) dt\right)^2\nonumber\\
&\lesssim {\TCR{(\prod_{l = 1}^d h_l)^{-2 + \frac{2}{dH}}}}+
{ t_k^2}+{ \TCR{(\prod_{l = 1}^d h_l)^{-2}} t_k^{2H-3}}\lesssim 
\TCR{(\prod_{l = 1}^d h_l)^{-2 + \frac{2}{dH}}}+\TCR{(\prod_{l = 1}^d h_l)^{-\frac{4}{5 - 2H}}},\label{eqa:freekkm1}
\end{align}
where the last inequality follows by optimizing  over $t_k$ leading to the choice $t_k=\TCR{(\prod_{l = 1}^d h_l)^{-\frac{2}{5 - 2H}}}\wedge T$.
}

\tcr{Let us finally consider the part $\int_{k-1}^k$ of \eqref{eq:part24409}. For $k\ge1$, we have
$$ \ES\left(\int_{k-1}^k \left(\mathbb{K}_h(x-X_t)-\ES[\mathbb{K}_h(x-X_t)|{\cal F}_{k-1}]\right) dt\right)^2=
2\int_{k-1}^k\int_{s}^k C_k(s,u) du,
$$
where $C_k$ denotes the covariance-type function defined by:
$$C_k(s,u)=\ES\left[\left(\mathbb{K}_h(x-X_u)-\ES[\mathbb{K}_h(x-X_u)|{\cal F}_{k-1}]\right)\left(\mathbb{K}_h(x-X_s)-\ES[\mathbb{K}_h(x-X_s)|{\cal F}_{k-1}]\right)\right].$$
Conditioning with respect to ${\cal F}_s$ we have, for every $k-1\le s\le u\le k$,
$$|C_k(s,u)|\lesssim h^{-d}\ES\left|\ES[\mathbb{K}_h(x-X_u)|{\cal F}_s]-\ES[\mathbb{K}_h(x-X_u)|{\cal F}_{k-1}]\right|
.
$$
With similar arguments as above,
$$|\ES[\mathbb{K}_h(x-X_u)|{\cal F}_s]-\ES[\mathbb{K}_h(x-X_u)|{\cal F}_{k-1}]|\lesssim \min\left\{\TCR{(\prod_{l = 1}^d h_l)^{-1}}, \frac{\Xi_{s,u}+\Xi_{k-1,u}}{(u-s)^{dH}}\right\}.
$$
Cutting $\int_s^{k}$ into two parts (at time $(s+\TCR{(\prod_{l = 1}^d h_l)^{\frac{1}{dH}}})\wedge k$), we deduce that 
$$\int_{k-1}^k\int_{s}^k C_k(s,u) du\lesssim \TCR{(\prod_{l = 1}^d h_l)^{-2 + \frac{1}{dH}}}.$$
}
Combined with \eqref{eqa:freekkm1} and plugged into \eqref{eq:part24409}, this leads to:
$$\ES\left(\frac{1}{T}\left(\ES[F_T|{\cal F}_k]-\ES[F_T|{\cal F}_{k-1}]\right)\right)^2\lesssim 
\frac{\TCR{(\prod_{l = 1}^d h_l)^{-2 + \frac{1}{dH}}}}{T^2}+ \frac{\TCR{(\prod_{l = 1}^d h_l)^{-\frac{4}{5 - 2H}}}}{T^2}.$$
Thus, 
\begin{align*}
&\sum_{k=1}^T \ES\left(\frac{1}{T}\left(\ES[F_T|{\cal F}_k]-\ES[F_T|{\cal F}_{k-1}]\right)\right)^2\lesssim 
\frac{\TCR{(\prod_{l = 1}^d h_l)^{-2 + \frac{1}{dH}}}}{T}+ \frac{\TCR{(\prod_{l = 1}^d h_l)^{-\frac{4}{5 - 2H}}}}{T}\\
&\qquad \lesssim \frac{\TCR{(\prod_{l = 1}^d h_l)^{-2}}}{T}\max\left(\TCR{(\prod_{l = 1}^d h_l)^{\frac{1}{dH}}}, \TCR{(\prod_{l = 1}^d h_l)^{\frac{2(3 - 2H)}{5 - 2H}}}\right).
\end{align*}
Combining the above bound with \eqref{eq:bornehhTT} leads to the result.

\section{Proof of main results}{\label{s: proof stat}}

This section is devoted to the proof of our main results on the final convergence rate for our estimation problem, that is, Theorems \ref{th: rate start} and \ref{th: rate plus}.
\subsection{Proof of \cref{th: rate start}}\label{sec:prooftheo2}

In order to get the \tcr{announced} convergence rates we decompose the mean squared error through the bias-variance decomposition. It yields
$$\mathbb{E}[|\hat{\pi}_{h,T}(x) - \pi (x)|^2] = |\mathbb{E}[\hat{\pi}_{h,T}(x)] - \pi(x)|^2 + {\rm Var}(\hat{\pi}_{h,T}(x)).$$
Then, it is well-known that the bias is upper bounded by $c \TCR{ \sum_{l = 1}^d h_l^{\beta_l}}$ (see, for example, Proposition 2 of \cite{adaptive}). Let us \tcr{state} such a bound in our context, under our (more general) hypothesis on the drift coefficient. The proof of \cref{prop: bias} can be found in \cref{s: tech}. 
\begin{proposition}{\label{prop: bias}}
\ar{Assume that \(\pi\) belongs to the H\"older class \(\mathcal{H}_d(\beta, L)\).} Then, there exists $\Lambda>0$ such that if  \ref{hyp1} holds with $\lambda\le \Lambda$, then there exists a constant $c$ such that \tcr{for all $T>0$ and $x \in \R^d$,}
$$|\mathbb{E}[\hat{\pi}_{h,T}(x)] - \pi(x)| \le c \TCR{ \sum_{l = 1}^d h_l^{\beta_l}}.$$
\end{proposition}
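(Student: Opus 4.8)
The plan is to reduce the bias to a purely deterministic, approximation--theoretic estimate that does not involve the dynamics at all beyond stationarity. Since $X$ is observed in its stationary regime, $X_u$ has density $\pi$ for every $u\in[0,T]$, and because $\mathbb{K}_h$ is bounded, Fubini gives
$$\mathbb{E}[\hat{\pi}_{h,T}(x)]=\frac1T\int_0^T\mathbb{E}[\mathbb{K}_h(x-X_u)]\,du=\mathbb{E}[\mathbb{K}_h(x-X_0)]=(\mathbb{K}_h*\pi)(x).$$
Hence the bias equals $(\mathbb{K}_h*\pi)(x)-\pi(x)$, uniformly in $T$, and the whole point is to show $\|\mathbb{K}_h*\pi-\pi\|_\infty\lesssim h^\beta$. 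Existence of $\pi$ (and its boundedness, needed for Fubini/convolution) is provided by \cref{prop:li_sieber}; the role of \ref{hyp1} with $\lambda\le\Lambda$ is only this indirect one, and the restriction on $\lambda$ is inherited from the standing assumptions rather than used quantitatively.

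Next I would exploit the product structure $\mathbb{K}_h(z)=\prod_{i=1}^d h^{-1}K(z_i/h)$. Writing $T_i$ for the one--dimensional convolution operator acting on the $i$-th coordinate only, $(T_if)(x)=\int_{\mathbb{R}} h^{-1}K(t/h)f(x-te_i)\,dt$, the $T_i$ commute and $\mathbb{K}_h*\pi=T_1\cdots T_d\,\pi$. Telescoping,
$$\mathbb{K}_h*\pi-\pi=\sum_{j=1}^d T_1\cdots T_{j-1}\big[(T_j-\mathrm{Id})\pi\big].$$
Each $T_i$ maps $L^\infty(\R^d)$ to itself with norm $\|K\|_{L^1}$, so it suffices to bound $\|(T_j-\mathrm{Id})\pi\|_\infty$ for each $j$. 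This reduces the problem to a one--dimensional estimate, which is the crucial structural point: the Hölder class $\mathcal{H}_d(\beta,\mathcal{L})$ only controls the iterated coordinate derivatives $D_j^{(k)}\pi$ and the directional Hölder seminorm of $D_j^{(\lfloor\beta\rfloor)}\pi$, not mixed partials, so a direct multivariate Taylor expansion is unavailable and one must handle one coordinate at a time.

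Finally, fix $x$ and $j$ and set $g(s)=\pi(x_1,\dots,x_{j-1},s,x_{j+1},\dots,x_d)$, so that $((T_j-\mathrm{Id})\pi)(x)=\int_{\mathbb{R}}K(u)\big(g(x_j-hu)-g(x_j)\big)\,du$. Taylor--expanding $g$ to order $\ell:=\lfloor\beta\rfloor$ at $x_j$ and using the vanishing moments $\int_{\mathbb{R}}K(u)u^k\,du=0$ for $1\le k\le M$ (with $M\ge\beta\ge\ell$) together with $\int_{\mathbb{R}}K=1$ cancels the zeroth and all intermediate terms, leaving
$$((T_j-\mathrm{Id})\pi)(x)=\int_{\mathbb{R}}K(u)\,\frac{(-hu)^\ell}{\ell!}\big(g^{(\ell)}(x_j-\theta_u hu)-g^{(\ell)}(x_j)\big)\,du$$
for some $\theta_u\in(0,1)$. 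Since $g^{(\ell)}$ coincides with $D_j^{(\ell)}\pi$ along this line, $|g^{(\ell)}(x_j-\theta_u hu)-g^{(\ell)}(x_j)|\le\mathcal{L}\,(h|u|)^{\beta-\ell}$; combined with $\mathrm{supp}(K)\subset[-1,1]$ and $\|K\|_\infty<\infty$ this yields $|((T_j-\mathrm{Id})\pi)(x)|\le c\,h^\beta$ uniformly in $x$. Plugging this into the telescoping identity and summing over $j$ gives $\|\mathbb{K}_h*\pi-\pi\|_\infty\le d\,\|K\|_{L^1}^{\,d-1}\,c\,h^\beta$, which is the assertion. I do not expect any genuine obstacle: the only thing requiring care is the coordinate--by--coordinate reduction forced by the anisotropic definition of $\mathcal{H}_d(\beta,\mathcal{L})$, and everything else is the standard higher--order kernel bias computation.
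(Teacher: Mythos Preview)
Your proof is correct and follows essentially the same route as the paper: both reduce the bias to $(\mathbb{K}_h*\pi)(x)-\pi(x)$ via stationarity, telescope coordinate--by--coordinate using the product structure of $\mathbb{K}_h$, and then apply a one--dimensional Taylor expansion together with the vanishing moments of $K$ and the directional H\"older regularity of $\pi$. Your operator formulation $\mathbb{K}_h*\pi-\pi=\sum_{j=1}^d T_1\cdots T_{j-1}(T_j-\mathrm{Id})\pi$ is simply a cleaner packaging of the iterative Taylor expansion that the paper writes out explicitly with the partially shifted base points $\tilde{x}_j$.
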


From \cref{th: upper bound} and \cref{prop: bias} we deduce
$$\mathbb{E}[|\hat{\pi}_{h,T}(x) - \pi (x)|^2] \le c \TCR{ \sum_{l = 1}^d h_l^{\beta_l}} + \frac{c}{T \TCR{(\prod_{l = 1}^d h_l)^{2}}} 1_{H < \frac{1}{2}} + \frac{c_\varepsilon}{T^{2 - 2H - \varepsilon}\TCR{(\prod_{l = 1}^d h_l)^{2}}} 1_{H > \frac{1}{2}}.$$
We want to choose $h= (h_1, ... h_d)$ such that the quantity here above is as small as possible. We start looking for the trade-off in the case $H < \frac{1}{2}$. Thus, we set $\TCR{h_l = \left(\frac{1}{T}\right)^{a_l}}$ and we want to minimize 
$$\TCR{ \sum_{l = 1}^d \left(\frac{1}{T}\right)^{2 a_l \beta_l} + \left(\frac{1}{T}\right)^{1 - 2 \sum_{l = 1}^d a_l } }.$$
\TCR{The trade-off provides the condition $2 a_1 \beta_1 = ... = 2 a_d \beta_d = 1 - 2 \sum_{l = 1}^d a_l$. Observe that, as a consequence of the first $d-1$ equations, we can write $a_l = \frac{\beta_d}{\beta_l} a_d$ for any $l \in \{1, ... , d \}$. Therefore, the last identity becomes 
\begin{align*}
2 a_d \beta_d & = 1 - 2 \beta_d a_d  \sum_{l = 1}^d \frac{1}{\beta_l} = 1 - 2 \beta_d a_d \frac{d}{\bar{\beta}}. 
\end{align*}
This implies that $2 a_d \beta_d (1 + \frac{d}{\bar{\beta}} ) = 1$, providing $$a_d = \frac{\bar{\beta}}{2 \beta_d (\bar{\beta} + d)}, \qquad \mbox{ and so } a_l = \frac{\bar{\beta}}{2 \beta_l (\bar{\beta} + d)}, \, \forall l \in \{1, ... , d \}.$$}
Replacing it in the bias-variance decomposition it implies that
$$\mathbb{E}[|\hat{\pi}_{h,T}(x) - \pi (x)|^2] \le c \TCR{\left(\frac{1}{T}\right)^{\frac{\bar{\beta}}{\bar{\beta} + d}}}.$$
This concludes the proof of \cref{th: rate start} in the case $H < \frac{1}{2}$. 

The proof in the case $H > \frac{1}{2}$ follows the same argument. We want to minimize the quantity 
$$\TCR{ \sum_{l = 1}^d \left(\frac{1}{T}\right)^{2 a_l \beta_l} + \left(\frac{1}{T}\right)^{2 - 2H - \varepsilon -2\sum_{l = 1}^d a_l}}.$$
Acting as above, it leads us to \TCR{$a_l =  \frac{(1 - H) \bar{\beta}}{\beta_l(\bar{\beta} + d)} - \varepsilon $ for any $l \in \{1, ... , d \}$},
 that provides the wanted convergence rate \TCR{$\left(\frac{1}{T}\right)^{\frac{2 {\bar{\beta}}(1 - H)}{\bar{\beta} + d}- \varepsilon}$}.

\subsection{Proof of \cref{th: rate plus}}\label{sec:prooftheo44}

The proof of \cref{th: rate plus} follows along the same lines as that of \cref{th: rate start}. It relies on the bias–variance decomposition, together with \cref{prop: bias} and the control of the stochastic term described in \cref{th:bound2}. Fine-tuning the bandwidth \( h \) depends on which term dominates the variance bound.
Specifically, we adjust \TCR{$h_l(T) = \frac{1}{T^{a_l}}$ to minimize
$$\phi(a_1, ..., a_d):= \sum_{l = 1}^d \left(\frac{1}{T}\right)^{2 a_l \beta_l} + \left(\frac{1}{T}\right)^{M},$$
where
$$
M:=(1 + \sum_{l = 1}^d a_l(\frac{1}{dH} - 2) ) \land (1 + 2(\frac{3 - 2H}{5 - 2H} - 1 )\sum_{l = 1}^d a_l) \land (2 - 2H - 2 \sum_{l = 1}^d a_l  - \varepsilon).$$}

\TCR{In order to better understand this object, let us start by considering the isotropic case, for which $a_1 = ... = a_d =: a$. In this case, }
$$M = (1 - a \alpha_{d,H}) \land (2 - 2H - 2da - \varepsilon). $$
We note that \(a\mapsto 2a\beta\) is increasing, while both \(1 - a\alpha_{d,H}\) and \(2 - 2H - 2da - \varepsilon\) are decreasing functions of \(a\), given that \(\alpha_{d,H} > 0\). In such a case, a simple study shows that $\phi$ attains its minimum at \(\tilde{a}_0 := \min(\tilde{a}_1,\tilde{a}_2)\)
where 
$$\tilde{a}_1={\rm Argmin}_a 2a\beta\land (1 - a\alpha_{d,H})\quad\textnormal{and}\quad \tilde{a}_2= {\rm Argmin}_a 2a\beta\land
(2 - 2H - 2da - \varepsilon).$$

This yields \(\tilde{a}_0 := \min\left(\frac{1}{2\beta + \alpha_{d,H}}, \frac{1 - H - \varepsilon}{\beta + d}\right)\) and leads to the following convergence rate:
$$\left(\frac{1}{T}\right)^{2 \tilde{a}_0 \beta} = \left(\frac{1}{T}\right)^{\frac{2\beta}{2\beta + \alpha_{d,H}} \land \frac{2 \beta (1 - H)}{\beta + d} - \varepsilon}.$$
\TCR{To conclude the proof, let us now turn to the anisotropic case. Recall that the first $d-1$ constraints can be written as  \begin{equation}\label{eq: constraint al}
  a_l = \frac{\beta_d}{\beta_l} a_d, \qquad \text{for any } l \in \{1, \ldots, d\},
\end{equation}
so that  
\[
\sum_{l = 1}^d a_l = a_d \beta_d \sum_{l = 1}^d \frac{1}{\beta_l} = a_d \beta_d \frac{d}{\bar{\beta}}.
\]
Similarly as before, we observe that
\[
M=(1 - a_d \tfrac{\beta_d}{\bar{\beta}} \alpha_{d,H}) \wedge (2 - 2H - 2d\, a_d \tfrac{\beta_d}{\bar{\beta}}).
\]
The mapping $a_d \mapsto 2 a_d \beta_d$ is increasing, while both 
$1 - a_d \tfrac{\beta_d}{\bar{\beta}} \alpha_{d,H}$ and 
$2 - 2H - 2d\, a_d \tfrac{\beta_d}{\bar{\beta}}$ are decreasing functions of $a_d$, since $\alpha_{d,H}$ and the regularity parameters are positive. 
Hence, the optimal choice corresponds to  
\[
\tilde{a}_{d,0} := \min(\tilde{a}_{d,1}, \tilde{a}_{d,2}),
\]
where  
\[
\tilde{a}_{d,1} = \operatorname*{argmin}_{a_d} \big( 2a_d\beta_d \wedge (1 - a_d \tfrac{\beta_d}{\bar{\beta}} \alpha_{d,H}) \big), 
\quad 
\tilde{a}_{d,2} = \operatorname*{argmin}_{a_d} \big( 2a_d\beta_d \wedge (2 - 2H - 2a_d \tfrac{\beta_d}{\bar{\beta}} - \varepsilon) \big).
\]
This yields  
\[
\tilde{a}_{d,0} = \tfrac{\bar{\beta}}{\beta_d} 
\min\!\left( \frac{1}{2\bar{\beta} + \alpha_{d,H}}, \frac{1 - H - \varepsilon}{\bar{\beta} + d} \right).
\]
Using \eqref{eq: constraint al}, we deduce that the minimum of $\phi$ is attained at $(\tilde{a}_{1,0}, \ldots, \tilde{a}_{d,0})$, where  
\[
\tilde{a}_{l,0} := \tfrac{\bar{\beta}}{\beta_l} 
\min\!\left( \frac{1}{2\bar{\beta} + \alpha_{d,H}}, \frac{1 - H - \varepsilon}{\bar{\beta} + d} \right).
\]
This completes the proof and yields the announced convergence rate.
}

\bibliographystyle{plain}
\bibliography{references}

\appendix
\section{Proof of Proposition \ref{prop: mixing}}{\label{sec:martproof}}
\fab{We start from the martingale decomposition \eqref{eq:martdecomp429} but in order to slightly simplify the purpose, we assume that $T$ is an integer\footnote{If not, it extends easily  by using that $\ES[(F_T-\ES[F_T|{\cal F}_{\lfloor T\rfloor}])^2]\le 4\|F\|_\infty^2$.}}. 
Thus,  
\begin{equation}{\label{eq: var split}}
\ES[(F_T-\ES[F_T])^2]\le 2\ES\left[(\ES[F_T|{\cal F}_{0}]-\ES[F_T])^2\right]+2\sum_{k=1}^T \ES\left[\left(\ES[F_T|{\cal F}_k]-\ES[F_T|{\cal F}_{k-1}]\right)^2\right].
\end{equation}
On the one hand, \tcr{under the stationarity assumption, we have} (using the same notation as in \cref{prop:TVTV})
\begin{equation*}{\label{eq:int}} \begin{split}
&\ES[F_T|{\cal F}_0]-\ES[F_T]\\
&=\int_0^T \int\ES[F(\Phi_{\tau}(\ell(\tcr{X_0},W^{-})))|\TCR{X_0},{W^{-}}]-\ES[F(\Phi_{\tau}(\ell(y,w^{-})))]\tcr{\Pi}(dy,dw^{-}) d\tau.   
\end{split}
\end{equation*}
By \cref{prop:TVTV}, 
\begin{align}
&|\ES[F(\Phi_{\tau}(\ell(\tcr{X_0},W^{-})))|\TCR{X_0},{W^{-}}]-\ES[F(\Phi_{\tau}(\ell(y,w^{-})))]|\lesssim \nonumber\\
&\qquad \|F\|_\infty \left(e^{-c\tau}|\tcr{X_0}-y|+\TCR{\frC(W^{-}-{w^{-}},\epsilon)}(1\vee\tau)^{H-1+\frac{\varepsilon}{2}}\right).\label{TVlongtimes;;}
\end{align}
{Recall that the first and second marginals of $\Pi$ are respectively $\pi$ and $\PE_{W^{-}}$. Then, we deduce that 
$$\TCR{|}\ES[F_T|{\cal F}_{0}]-\ES[F_T]\TCR{|}\lesssim \|F\|_\infty \left(\int|X_0-y|\pi(dy)+\TCR{\int \frC(W^{-}-{w^{-}},\epsilon)\PE_{W^{-}}(dw^{-})T^{H+\frac{\varepsilon}{2}}}\right).
$$}
{It is classical that $\sup_{t\ge0} (1\wedge t)^{-\frac{1}{2}-\varepsilon}|B_t|$ has moments \TCR{of any order}. It easily follows that $\frC$ defined in \eqref{eq:mathfrakCesp} satisfies: 
\begin{equation}\label{eq:boundedfrC}
\int \frC^2(\tilde{w}^{-}-{w^{-}},\epsilon)\PE_{W^{-}}^{\otimes2}(d\tilde{w}^{-},dw^{-})<+\infty.
\end{equation}
Hence, for every $T\ge 1$,
$$\ES\left[(\ES[F_T|{\cal F}_{0}]-\ES[F_T])^2\right]\lesssim_{\varepsilon}  \|F\|_\infty ^2\left( \int |x-y|^2\pi^{\otimes 2}(dx,dy)+T^{2H+\varepsilon}\right)\lesssim_{\varepsilon} T^{2H+\varepsilon} \|F\|_\infty^2 .$$
On the other hand, { for all $k\in \left \{ 1, ... , T \right \}$,
\begin{align*}
\ES[F_T|{\cal F}_k]-\ES[F_T|{\cal F}_{k-1}]&= \int_{k-1}^T\ES[F(X_s)|{\cal F}_k]-\ES[F(X_s)|{\cal F}_{k-1}] ds.
\end{align*}
To simplify the notations, let us detail the case $k=1$ (the generalization easily follows from a Markov argument). We have for $s\ge1$}
\begin{align*}
&|\ES[F(X_s)|{\cal F}_1]-\ES[F(X_s)|{\cal F}_{0}]| \\
& \qquad \le\|F\|_\infty \int \|{\cal L}(X_s|{\cal F}_1)-{\cal L}(X_s|{\cal F}_1,(W_t)_{t\in[0,1]}=(w_t)_{t\in[0,1]})\|_{TV}\PE_{W}(dw).
\end{align*}
At the price of a translation from $[0,1]$ to $[-1,0]$, we are exactly in the setting of the second statement of \cref{prop:TVTV}. More precisely, the conditioning of the Wiener process on $[0,1]$ implies that we are considering two solutions for which the past before time $1$ differs only on $[0,1]$. We thus deduce from \cref{prop:TVTV} that for $s\ge1$
\begin{align*}
\|{\cal L}(X_s|{\cal F}_1)&-{\cal L}(X_s|{\cal F}_1,(W_t)_{t\in[0,1]}=(w_t)_{t\in[0,1]})\|_{TV}
\\
&\le  \left(e^{-c(s-1)}|X_1-X_1^{w}|+\TCR{\|w-W\|_{\infty,[0,1]}}((s-1)\vee 1)^{H-\frac{5}{2}}\right),
\end{align*}
where $X_1^{w}$ denotes the solution at time $1$ starting from $X_0$ but with Wiener path $w$ on $[0,1]$. Since $X_1$ and $X_1^{w}$ have the same starting point $X_0$, a Gronwall argument implies that
$$|X_1-X_1^{w}|\le \ar{\sigma} \int_0^1 e^{[b]_{\rm Lip}(s-t)}|\tilde{B}_s-\tilde{B}_s^{w}| ds\le \ar{\sigma} {\cal T}(1,W,w),$$
where $\tilde{B}$ (or $\tilde{B}^W$) denotes the Liouville process (defined by \eqref{def:liouville}),  $\tilde{B}_s^{w}$ the Liouville path driven by $w$ and
$${\cal T}(k,W,w)=\sup_{s\in[k-1,k]}|\tilde{B}_s^{k-1}-\tilde{B}_s^{k-1,w}| \textnormal{ with} \quad  \tilde{B}_s^{t}=\tilde{B}_s-\tilde{B}_t.$$
From what precedes and a Markov argument for extending to any $k$, we get for $s\ge k$
\begin{equation*}\label{eq:aveck??}
\begin{split}
 &|\ES[F(X_s)|{\cal F}_k]-\ES[F(X_s)|{\cal F}_{k-1}]| \\
&\ar{\lesssim}\|F\|_\infty \int\left(e^{-c(s-k)}{\cal T}(k,W,w)+ \TCR{\|w^{k-1}-W^{k-1}\|_{\infty,[k-1,k]}}(\TCR{(s-k)\vee 1})^{H-\frac{5}{2}}\right)\PE_{W}(dw),
\end{split}
\end{equation*}
where $w^{t}_s=w_s-w_t$ and $W^{t}_s=W_s-W_t$. The random variables involved in the above inequality do not depend on $s$ and have moments of any order. This implies that 
$$\ES\left[\left(\ES[F_T|{\cal F}_k]-\ES[F_T|{\cal F}_{k-1}]\right)^2\right]\lesssim \|F\|_\infty^2 \left(\int_{k-1}^T ((t-k)\vee 1)^{H-\frac{5}{2}} dt\right)^2 \lesssim \|F\|_\infty^2. $$}




\noindent {Thus,
$$\sum_{k=1}^T \ES\left[\left(\ES[F_T|{\cal F}_k]-\ES[F_T|{\cal F}_{k-1}]\right)^2\right]\lesssim T {\|F\|_\infty^2}.$$
Plugging into \eqref{eq: var split}, this yields
$$ {\rm Var}\lp\frac{1}{T} \int_0^T F(X_s) ds \rp  \le \frac{1}{T^2}\|F\|_\infty^2(c T + c_\epsilon T^{2 H + \epsilon}). $$
The result follows (taking, for example, $\varepsilon=\frac{1-2H}{2}$ when $H<1/2$). }
\noindent \hfill $\square$

\TCR{
\section{Concentration inequalities and discrete observations}{\label{s: CI}}
In this section, we focus on the proof of \cref{th: upper bounddiscrete}, addressing the case of discrete observations and concentration inequalities. To this end, we provide below a general concentration inequality that can be applied in both the discrete and continuous settings.}

\TCR{
We focus on 
\begin{equation}\label{eq:generalfuncrefmes}
F_{T,\mes} = \int_0^T F(X_s) \, \mes(ds),
\end{equation}
where \( \mes \) is a positive measure on \( \mathbb{R}_+ \). When \( \mes \) is the Lebesgue measure, we recover the standard integral of \( (F(X_s))_{0 \le s \le T} \), and when \( \mes = \sum_{i \ge 0} \delta_{i\Delta} \) with \( T = n \Delta \), this allows us to apply our results below to \( \sum_{i=0}^{n} F(X_{i\Delta}) \).
}

 \TCR{
We set
\begin{align*}
&a_{0,\mes}=\int_0^T e^{-c\tau}\mes(d\tau),\quad b_{0,\mes,\varepsilon}=\int_0^T(1+\tau)^{H-1+\frac{\varepsilon}{2}} \mes(d\tau)\quad\textnormal{and for $k\in\llbracket 1,\lfloor T\rfloor \rrbracket $,}\\
&a_{k,\mes}=\int_{0}^{T-k} e^{- c\tau}\mes(d\tau),\quad b_{k,\mes}=\int_0^{T-k}(1+\tau)^{H-\frac{5}{2}}\mes(d\tau).
    \end{align*}
    We finally set $a_{\lceil T\rceil,\mes}=b_{\lceil T\rceil,\mes}=1$.}

\TCR{
In the sequel, we usually write $b_{0,\mes}$ instead of $b_{0,\mes,\varepsilon}$. Furthermore, the constant $C_\varepsilon$ in the next proposition depends only on $\varepsilon$ but may change from line to line.
\begin{theorem}\label{prop:concentrationdisccontinu}
There exists $\Lambda>0$ such that if  \ref{hyp1} holds with $\lambda\le \Lambda$, then, for all $\varepsilon>0$ sufficiently small, there exist $C_\varepsilon>0$ such that 
$$\ES\left[e^{\lambda |F_{T,\mes}-\ES[F_{T,\mes}]|}\right]\le e^{C_\varepsilon\lambda^2 \|F\|_\infty^2\left(\sum_{k=0}^{\lceil T\rceil} a_{k,\mes}^2+b_{k,\mes}^2\right)}.
$$
In this case, for all $x>0$,
\begin{equation}\label{eq:generalconcentration}
\PE\left(|F_{T,\mes}-\ES[F_{T,\mes}]|>x\right)\le \exp\left({- \frac{x^2}{C_\varepsilon \|F\|_\infty^2\left(\sum_{k=0}^{\lceil T\rceil} a_{k,\mes}^2+b_{k,\mes}^2\right) }}\right).  
\end{equation}
As a consequence, for all $x>0$, 
$$\PE\left(\left|\frac{1}{T}\int_0^T F(X_s) ds-\pi(F)\right|>x\right)\le \exp\left(- \frac{x^2 T^{1\wedge (2-2H+\varepsilon)}}{C_\varepsilon \|F\|_\infty^2}\right),$$
and if $n\Delta\ge 1$ and $\Delta\le 1$,
$$\PE\left(\left|\frac{1}{n}\sum_{k=0}^{n-1} F(X_{i\Delta})-\pi(F)\right|>x\right)\le \exp\left(-\frac{x^2 (n\Delta)^{1\wedge (2-2H+\varepsilon)}}{C_\varepsilon \|F\|_\infty^2} \right).$$
\end{theorem}}
\TCR{
In order to slightly simplify the notations, we assume that $T$ is an integer in the proofs below (but the extension to the general case does not add specific difficulties).
\begin{proof}
 Set $M_k=\ES[F_{T,\mes}|{\cal F}_k]$ for $k\ge0$. Since $F_{T,\mes}-\ES[F_{T,\mes}]= \sum_{k\ge 1} M_{k}-M_{k-1}+ \ES[F_{T,\mes}|{\cal F}_0]-\ES[F_{T,\mes}],$ we deduce from \cref{lem:conc245} below and from an induction (see \cite{adaptive} for a similar approach) that 
\begin{align*}
\ES\left[e^{\lambda |F_{T,\mes}-\ES[F_{T,\mes}]|}\right]&\le \ES[e^{\lambda |M_T-M_{T-1}|}|{\cal F}_{T-1}] \ES[e^{\sum_{k=1}^{T-1}\lambda|M_k-M_{k-1}|+\ES[F_{T,\mes}|{\cal F}_0]-\ES[F_{T,\mes}]}]\\
&\le e^{C\lambda^2\|F\|_\infty^2(a_{T,\mes}^2+b_{T,\mes}^2)}.
\end{align*}
The bound \eqref{eq:generalconcentration} is then a classical consequence of the (exponential) Markov property (combined with an optimization of $\lambda$). For the third one, we remark that in this case: $a_{0,\mes}\lesssim 1$, $b_{0,\mes,\varepsilon}\lesssim T^{H+\varepsilon/2} $ and for $k\ge1$,
\begin{align*}
a_{k,\mes}\lesssim 1\quad \textnormal{and}\quad b_{k,\mes}\lesssim 1,
\end{align*}
which leads to the result (applying to $\tilde{x}=Tx$). For the last one, we apply the general bound \eqref{eq:generalconcentration} with 
$T=n\Delta$ and $\mes=\sum_{i=0}^{n-1} \delta_{i\Delta}$. In this case, 
$a_{0,\mes}=\sum_{i=0}^{n-1} e^{-c i\Delta}\lesssim \Delta^{-1}\vee 1\lesssim \Delta^{-1}$,
since we assume that $\Delta\le 1$.
One also checks that 
$$b_{0,\mes,\varepsilon}\lesssim \Delta^{-1}+\Delta^{-1}(n\Delta)^{H+\frac{\varepsilon}{2}},$$
and that 
\begin{align*}
a_{k,\mes}\lesssim \Delta^{-1}\quad \textnormal{and}\quad b_{k,\mes}\lesssim \Delta^{-1}.
\end{align*}
This yields 
$$\sum_{k=0}^{\lceil n\Delta\rceil} a_{k,\mes}^2+b_{k,\mes}^2\lesssim n\Delta^{-1}+\Delta^{-2}(n\Delta)^{2H+\varepsilon}.$$
Applying \eqref{eq:generalconcentration} to $\tilde{x}=nx$  leads to the announced bound.
\end{proof}}
\TCR{
\begin{lemma}\label{lem:conc245} Let $T$ be a positive integer. Let $M_k=\ES[F_{T,\mes}|{\cal F}_k]$ for $k\ge0$. 
\begin{itemize}
    \item There exists $C>0$ such that for every $k\in \llbracket 1,T\rrbracket$,
$$\ES[e^{\lambda |M_k-M_{k-1}|}|{\cal F}_{k-1}]\le e^{C\lambda^2\|F\|_\infty^2(a_{k,\mes}^2+b_{k,\mes}^2)}.$$
\item For all $\varepsilon>0$, there exists $C_\varepsilon>0$ such that
\begin{equation*}\label{eq:fOOO}
 \ES[e^{\lambda |\ES[F_{T,\mes}|{\cal F}_0]-\ES[F_{T,\mes}]|}]\le e^{C_\varepsilon\lambda^2\|F\|_\infty^2(a_{0,\mes}^2+b_{0,\mes,\varepsilon}^2)}.
 \end{equation*}
\end{itemize}
\end{lemma}}
\TCR{
\begin{proof} By \cref{lem:gaussianconc} below, it is enough to prove that there exists $\alpha>0$ such that 
for every $\eta$ such that if $\eta\|F\|_\infty^2(a_{k,\mes}^2+b_{k,\mes}^2)
\le \alpha$ for $k\in\rrbracket 0,T\llbracket $, then,
\begin{equation}\label{eq:gausscontrol1}
\ES[e^{\eta (M_k-M_{k-1})^2}|{\cal F}_{k-1}]\le C <+\infty,
    \end{equation}
 where $C$ is a deterministic constant   and, 
    \begin{equation}\label{eq:gausscontrol2}
        \ES[e^{\eta(\ES[F_{T,\mes}|{\cal F}_0]-\ES[F_{T,\mes}])^2}]<+\infty. 
    \end{equation}
For \eqref{eq:gausscontrol1}, we appeal to \eqref{eq:aveck??} which yields:
$$ |M_k-M_{k-1}|\ar{\lesssim}\|F\|_\infty \int \left( a_{k,\mes}{\cal T}(k,W,w)+b_{k,\mes}{\|w^{k-1}-W^{k-1}\|_{\infty,[k-1,k]}}\right)\PE_{W}(dw).
$$
Thus, 
$$\ES[e^{\eta (M_k-M_{k-1})^2}|{\cal F}_{k-1}]\lesssim \ES[e^{2\eta\|F\|_\infty^2\left(a_{k,\mes}^2 U_1^2+b_{k,\mes}^2 U_2^2\right)}]$$
with 
$$ U_1=\int \sup_{t\in[0,1]}|\int_0^{t} (t-s)^{H-\frac{1}{2}} d(W-w)_s|\PE_W(dw)$$
and
$$ U_2=\int \|w-W\|_{\infty,[0,1]}\PE_W(dw).$$
Note that we above used the stationarity of the increments.
Using Cauchy-Schwarz inequality, it remains to prove that $U_1$ and $U_2$ are subgaussian. For $U_2$, it is enough to show that there exists $C>0$ such that 
$$ \ES[e^{C (\|W\|_{\infty,[0,1]})^2}]<+\infty.$$
This property is classical (in one-dimension, it is a consequence of the fact that $\sup_{t\in[0,1]} W_t\sim |W_1|$ combined with the symmetry of the Brownian motion). For $U_2$, an integration by parts implies that
$$\int_0^{t} (t-s)^{H-\frac{1}{2}} dW_s=-t^{H-\frac{1}{2}}W_t+\left(H-\frac{1}{2}\right)\int_0^t (t-s)^{H-\frac{3}{2}}(W_s-W_t)ds.$$
Thus, for (small) $\varepsilon>0$
\begin{align*}
\sup_{t\in[0,1]}|\int_0^{t} (t-s)^{H-\frac{1}{2}} dW_s|&\le \sup_{t\in[0,1]} t^{H-\frac{1}{2}}|W_t|+
C_{H,\varepsilon}\sup_{0\le s<t<1} \frac{|W_s-W_t|}{(t-s)^{\frac{1}{2}-\varepsilon}}\\
&\le(1+C_{H,\varepsilon}) \underbrace{\sup_{0\le s<t<1} \frac{|W_s-W_t|}{(t-s)^{\frac{1}{2}-\varepsilon}}}_{\|W\|_{\frac{1}{2}-\varepsilon,[0,1]}},
\end{align*}
where in the second line we used that $t^{H-\frac{1}{2}}|W_t|\le t^{H-\varepsilon}\|W\|_{\frac{1}{2}-\varepsilon,[0,1]}$. The subgaussianity of $U_2$ then follows from that of $\|W\|_{\frac{1}{2}-\varepsilon,[0,1]}$. This property is a consequence of \cite[Theorem A.19]{frizvictoir}.
For \eqref{eq:gausscontrol2}, we use \eqref{TVlongtimes;;} which leads to: 
$$\TCR{|}\ES[F_{T,\mes}|{\cal F}_{0}]-\ES[F_{T,\mes}]\TCR{|}\lesssim \|F\|_\infty \left(a_{0,\mu}\int|X_0-y|\pi(dy)+b_{0,\mu,\varepsilon}U_3\right)$$
with 
$$U_3=\int \frC(W^{-}-{w^{-}},\epsilon)\PE_{W^{-}}(dw^{-}).$$
Once again, it is now enough to prove that $U_3$ is subgaussian. To this end, we remark that
$$\frC(W^{-},\epsilon)=\sup_{s\le0}\frac{|W_s^{-}|}{(1\vee |s|)^{\frac{1+\varepsilon}{2}}}\le \sup_{s\in[-1,0]} |W_s^{-}|+
\sup_{s\le -1}\frac{|W_s^{-}|}{|s|^{\frac{1+\varepsilon}{2}}}. $$
By symmetry (and the fact that $\|W\|_{\infty,[0,1] }$ is subgaussian), this implies that we need to prove the subgaussianity of $\sup_{s\ge 1}\frac{|W_s|}{s^{\frac{1+\varepsilon}{2}}}$. At the price of considering the coordinates, we can focus on the one-dimensional case. Setting $u=1/s$ and using that $(u W_{\frac{1}{u}})_{u\ge0}$ is a Brownian motion, we deduce that
$$\sup_{s\ge 1}\frac{|W_s|}{s^{\frac{1+\varepsilon}{2}}}\sim \sup_{u\in[0,1]} \frac{|W_u|}{u^{\frac{1-\varepsilon}{2}}}\le \|W\|_{\frac{1-\varepsilon}{2},[0,1]},$$
whose subgaussianity has already been recalled previously. This concludes the proof.
\end{proof}}

\TCR{
\begin{lemma}\label{lem:gaussianconc} Let $X$ be a non-negative random variable and $\eta>0$ such that $\ES[e^{\eta X^2}]<+\infty$. Then, for every $\lambda\ge0$,
$$\ES[e^{\lambda X}]\le 1+\frac{\lambda\sqrt{\pi}}{\sqrt{\eta}} e^{\frac{\lambda^2}{4\eta}} \le C  e^{\frac{\lambda^2}{2\eta}} $$
where $C$ is a universal constant.
\end{lemma}
\begin{proof} We have:
$$\ES[e^{\lambda X}]=1+\lambda\int_0^{+\infty} e^{\lambda u}\PE(X>u) du.$$
Since $\ES[e^{\eta X^2}]<+\infty$, the Markov inequality implies that
$$\PE(X>u)\le e^{-\eta u^2}.$$
Then, standard computations imply that 
$$ \int_0^{+\infty} e^{\lambda u}\PE(X>u) du \le \sqrt{\pi}{\eta} e^{\frac{\lambda^2}{4\eta}}.$$
This yields the first bound. The second follows thanks to the elementary inequality $xe^{\frac{x^2}{4}}\le e^{\frac{x^2}{2}}$ for every $x\ge0$.
\end{proof}
}
\TCR{
\begin{proof}[Proof of \cref{th: rate start discrete}]
The proof of \cref{th: rate start discrete} follows the same line of reasoning as that of Theorem~\ref{th: rate start}, with $T = n\Delta$. It relies on a standard bias–variance decomposition, together with the bias bound in Proposition~\ref{prop: bias}. Note that, although the proof of this proposition was given for the estimator based on continuous observations, it remains valid in the discrete observation setting (see Proposition~2 of~\cite{adaptive}). The bound on the variance is instead provided by Proposition~\ref{th: upper bounddiscrete}, which replaces the variance bound in \cref{th:  upper bound}, used in the proof of Theorem~\ref{th: rate start}. The proof is therefore omitted.
\end{proof}}

\TCR{
\section{Proof adaptive procedure}{\label{s: proof adaptive}}}

\TCR{
This section is devoted to the proofs of the results stated in Section~\ref{s:adaptive}, concerning the adaptive procedure. In particular, we provide the proofs of Theorem~\ref{th: rate adaptive} and Corollary~\ref{cor: rate adaptive final}.}

\vskip 12pt

\begin{proof}[Proof of \cref{th: rate adaptive}]
\TCR{
The adaptive procedure crucially relies on the following bound on the expectation of $A_n(h,x)$, defined in \eqref{eq:Anhx}, which we state here and prove below, valid for any $h \in \mathcal{H}_n$:
\begin{equation}\label{eq: claim adaptive}
\E[A_n(h,x)] \lesssim B_n(h) + (n \Delta)^{c_1} e^{-c_2 (\log(n \Delta))^{c_3}},   
\end{equation}
for some positive constants $c_1, c_2$, and $c_3 > 1$. 
}

\TCR{
The fact that the bound \eqref{eq: claim adaptive} implies the result stated in \cref{th: rate adaptive} follows from standard algebraic arguments and is therefore omitted here. A detailed proof of this implication can be found in Section~6 of \cite{Chapitre4}, where the bound above appears as Proposition~5, and its connection to the adaptive procedure is established in the proof of Theorem~1 therein. This reasoning is now classical: even in the proof of the adaptive procedure in \cite{Minimax} (see Theorems~3 and~4), the verification that \eqref{eq: claim adaptive} yields the desired convergence rate for the data-driven estimator is omitted for brevity.
}

\TCR{
Let us now turn to the proof of~\eqref{eq: claim adaptive}.  
By adding and subtracting $\pi_{(h,\eta), n}$ and $\pi_{\eta, n}$, and using the triangle inequality, we obtain, for any $h \in \mathcal{H}_n$,  
\begin{align*}
A_n(h,x) 
&\le \sup_{\eta \in \mathcal{H}_n} \Big[ (|\check{\pi}_{(h,\eta), n}(x) - \pi_{(h,\eta), n}(x)|^2 - \tfrac{V_n(\eta)}{2})_+ 
+ |\pi_{(h,\eta), n}(x) - \pi_{\eta, n}(x)|^2  \\
& \hspace{3.5cm} + (|\check{\pi}_{\eta, n}(x) - \pi_{\eta, n}(x)|^2 - \tfrac{V_n(\eta)}{2})_+ \Big] \\
&=: \sup_{\eta \in \mathcal{H}_n} \big[ I_1^{h, \eta}(x) + I_2^{h, \eta}(x) + I_3^{\eta}(x) \big].
\end{align*}
We now analyze the three terms above separately.  
We begin with $I_2^{h, \eta}(x)$, which is the simplest one.  
By definition,
\[
|\pi_{(h,\eta), n}(x) - \pi_{\eta, n}(x)|
= |\mathbb{K}_\eta \star (\pi_h - \pi)(x)|
\le \|\mathbb{K}_\eta\|_{L^1} \, \|\pi_h - \pi\|_{\infty}
\lesssim B_n(h),
\]
where we used the boundedness of $\|\mathbb{K}_\eta\|_{L^1}$ and the definition of $B_n(h)$.}

\TCR{
To control both $I_1^{h, \eta}(x)$ and $I_3^{\eta}(x)$, the main tool will be the concentration inequality stated in Proposition~\ref{th: upper bounddiscrete}.  
Let us start with $I_3^{\eta}(x)$. We have
\begin{align*}
\sup_{\eta \in \mathcal{H}_n} I_3^{\eta}(x) 
&\le \sum_{\eta \in \mathcal{H}_n} \int_0^\infty 
\mathbb{P}\big( (|\check{\pi}_{\eta, n}(x) - \pi_{\eta, n}(x)|^2 - \tfrac{V_n(\eta)}{2})_+ \ge t \big) \, dt \\
&\le \sum_{\eta \in \mathcal{H}_n} \int_0^\infty 
\mathbb{P}\big( |\check{\pi}_{\eta, n}(x) - \pi_{\eta, n}(x)| \ge ( \tfrac{V_n(\eta)}{2} + t )^{1/2} \big) \, dt.
\end{align*}
Recall that 
\[
\check{\pi}_{\eta, n}(x) = \frac{1}{n} \sum_{i = 0}^{n - 1} \mathbb{K}_\eta(x - X_{i \Delta}), 
\qquad 
\pi_\eta = \mathbb{E}[\check{\pi}_{\eta, n}(x)],
\]
so that the concentration inequality in Proposition~\ref{th: upper bounddiscrete} applies, yielding
\begin{align*}
\sup_{\eta \in \mathcal{H}_n} I_3^{\eta}(x)
&\le \sum_{\eta \in \mathcal{H}_n} \int_0^\infty 
\exp\!\left( - \frac{(t + \tfrac{V_n(\eta)}{2})(n \Delta)^{1 \wedge (2 - 2H + \epsilon)}}{C_\epsilon \|\mathbb{K}_\eta\|_\infty^2} \right) dt \\
&= \sum_{\eta \in \mathcal{H}_n} 
\exp\!\left( - \frac{1}{2 C_\epsilon (\prod_{l = 1}^d \eta_l)^4} \right)
\int_0^\infty 
\exp\!\left( - \frac{t (n \Delta)^{1 \wedge (2 - 2H + \epsilon)}}{C_\epsilon \|\mathbb{K}_\eta\|_\infty^2} \right) dt,
\end{align*}
where we used the definition of $V_n(\eta)$ and the fact that 
$\|\mathbb{K}_\eta\|_\infty \lesssim \prod_{l = 1}^d \eta_l$. Computing the integral explicitly gives
\[
\sup_{\eta \in \mathcal{H}_n} I_3^{\eta}(x)
\le c \sum_{\eta \in \mathcal{H}_n}
\frac{\|\mathbb{K}_\eta\|_\infty^2}{(n \Delta)^{1 \wedge (2 - 2H + \epsilon)}}
\exp\!\left( - \frac{1}{2 C_\epsilon (\prod_{l = 1}^d \eta_l)^4} \right).
\]
Since for all $\eta \in \mathcal{H}_n$ we have 
$\prod_{l = 1}^d \eta_l \le (1 / \log(n \Delta))^{1/4 + a}$, it follows that
\[
\exp\!\left( - \frac{1}{2 C_\epsilon (\prod_{l = 1}^d \eta_l)^4} \right)
\le \exp\!\left( - \frac{(\log(n \Delta))^{1 + a/4}}{2 C_\epsilon} \right).
\]
Moreover, according to \eqref{eq: Hn pol}, $\mathcal{H}_n$ is assumed to be polynomial in $n \Delta$, which implies
\[
\sup_{\eta \in \mathcal{H}_n} I_3^{\eta}(x)
\lesssim (n \Delta)^{c_1} e^{-c_2 (\log(n \Delta))^{c_3}},
\]
for some constants $c_1, c_2 > 0$ and $c_3 > 1$, as desired.}

\TCR{
The bound on $\sup_{\eta \in \mathcal{H}_n} I_1^{h, \eta}(x)$ follows the same reasoning as that for $\sup_{\eta \in \mathcal{H}_n} I_3^{\eta}(x)$.  
The only difference lies in $\|\mathbb{K}_\eta\|_\infty$ being replaced by $\|\mathbb{K}_h \star \mathbb{K}_\eta\|_\infty$, which satisfies however
\[
\|\mathbb{K}_h \star \mathbb{K}_\eta\|_\infty 
\le c \|\mathbb{K}_\eta\|_\infty \|\mathbb{K}_h\|_{L^1} 
\lesssim \|\mathbb{K}_\eta\|_\infty,
\]
since the $L^1$-norm of the kernel function is uniformly bounded by a constant.  
It then follows immediately that the same bound established for $\sup_{\eta \in \mathcal{H}_n} I_3^{\eta}(x)$ also holds for $\sup_{\eta \in \mathcal{H}_n} I_1^{h, \eta}(x)$.  
This completes the proof of~\eqref{eq: claim adaptive} and, consequently, the proof of Theorem~\ref{th: rate adaptive}.}

\end{proof}

\begin{proof}[Proof of Corollary \ref{cor: rate adaptive final}]
\TCR{
Recall that, according to Theorem~\ref{th: rate start discrete}, the rate-optimal choice of bandwidth is given by  
\begin{equation}\label{eq: h disc opt}
h_l(n \Delta) = \left(\frac{1}{n \Delta}\right)^{-\frac{\bar{\beta}}{2 \beta_l (\bar{\beta} + d)} (1 \wedge (2 - 2H + \epsilon))}
\qquad \forall l \in \{1, \ldots, d\}.
\end{equation}
We want this bandwidth to belong to the set of candidate bandwidths $\mathcal{H}_n$ defined in~\eqref{eq: bandwidths pol}, ensuring that it achieves the optimal bias–variance trade-off appearing on the right-hand side of~\eqref{eq: result adap} in Theorem~\ref{th: rate adaptive}.  
To this end, we verify that the defining condition of $\mathcal{H}_n$ is satisfied, namely
\[
\prod_{l = 1}^d h_l(n \Delta) \le \left(\frac{1}{\log(n \Delta)}\right)^{\frac{1}{4} + a}.
\]
Observe that, for $h_l$ as in~\eqref{eq: h disc opt},
\[
\prod_{l = 1}^d h_l(n \Delta) 
= \left(\frac{1}{n \Delta}\right)^{-\frac{\bar{\beta}}{2 (\bar{\beta} + d)} (1 \wedge (2 - 2H + \epsilon)) \sum_{l = 1}^d \frac{1}{\beta_l}}
= \left(\frac{1}{n \Delta}\right)^{-\frac{d}{2 (\bar{\beta} + d)} (1 \wedge (2 - 2H + \epsilon))},
\]
where we have used the definition of $\bar{\beta}$.
It follows that, for $T = n \Delta$ sufficiently large, this product is indeed smaller than 
$\left(\frac{1}{\log(n \Delta)}\right)^{\frac{1}{4} + a}$ for any $a > 0$.}

\TCR{
However, to guarantee that the bandwidths defined in~\eqref{eq: h disc opt} actually belong to the discrete set $\mathcal{H}_n$, we require them to be of the form 
$h_l = \frac{1}{z_l}$ with $z_l \in \{1, \ldots, \lfloor n \Delta \rfloor\}$, which does not necessarily hold in general.  
To address this, we define instead the discretized bandwidths
\[
\tilde{h}_l(n \Delta) = \frac{1}{\lfloor (n \Delta)^{-\frac{\bar{\beta}}{2 \beta_l (\bar{\beta} + d)} (1 \wedge (2 - 2H + \epsilon))} \rfloor}, 
\qquad \forall l \in \{1, \ldots, d\},
\]
which are asymptotically equivalent to $h_l(n \Delta)$ (and hence yield the same rate), but satisfy 
$\tilde{h}(n \Delta) := (\tilde{h}_1(n \Delta), \ldots, \tilde{h}_d(n \Delta)) \in \mathcal{H}_n$ by construction.  
From Theorem~\ref{th: rate start discrete} we then deduce that
\[
\arg \inf_{h \in \mathcal{H}_n} \big(B_n(h) + V_n(h)\big) = \tilde{h},
\]
and substituting this into Theorem~\ref{th: rate adaptive} completes the proof of the corollary.}

\end{proof}

\section{Technical results}{\label{s: tech}}

\begin{lemma}\label{lem:stationary_density_bounded}
    Under \ref{hyp1} the Lebesgue density $\pi$ of the stationary measure satisfies
    \begin{equation*}
        \|\pi \|_\infty < \infty.
    \end{equation*}
\end{lemma}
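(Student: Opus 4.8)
The plan is to combine the conditional Gaussian density bound for the innovation equation \eqref{eq:innovation_sde} recalled in \eqref{eq:upper_bound_conditional_density} (taken from \cite[Proposition~4.7]{LPS22}) with the semi-Markov identity \eqref{eq:semimarkov} and the stationarity of $X$; this is morally the ``$h\to 0$'' specialization of the first steps of the proof of \cref{lem:espcondkh}. I note that the statement is in fact an immediate consequence of the Gaussian upper bound \eqref{eq:gaussian_bounds} in \cref{prop:li_sieber}, which already gives $\|\pi\|_\infty\le C_2$; but the argument I would write uses only the local estimate \eqref{eq:upper_bound_conditional_density} together with the Gaussian integrability of the invariant law, so that it stays within the relaxed framework of \ref{hyp1}.

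In detail, I would proceed as follows. First, since $b$ is Lipschitz, \cite[Lemma~3.6]{LPS22} ensures that $\Phi_\tau(\ell)$ admits a Lebesgue density $p_\tau(\ell;\cdot)$ for every $\ell\in\Cloc^{H-}(\R_+,\R^d)$. Applying \eqref{eq:semimarkov} with $s=0$ and using stationarity, $\pi=\mathcal{L}(X_0)=\mathcal{L}(X_\tau)$ with $X_\tau=\Phi_\tau(X_0+\sigma\bar B^0)$, where the noise driving $\Phi_\tau$ is independent of $\mathcal{F}_0$ while $X_0$ and $\bar B^0$ are $\mathcal{F}_0$-measurable; conditioning on $\mathcal{F}_0$ and invoking Tonelli's theorem would then give, for every $\tau>0$ and a.e.\ $y\in\R^d$,
\begin{equation*}
\pi(y)=\ES\big[p_\tau\big(X_0+\sigma\bar B^0;y\big)\big].
\end{equation*}
Next, for $\tau\in(0,T_0]$ I would insert \eqref{eq:upper_bound_conditional_density}, discarding the factor $\exp(-c|y-\ell(\tau)|^2/\tau^{2H})\le1$, and use that the additive constant $X_0$ does not contribute to the Hölder seminorm, so $\vertiii{X_0+\sigma\bar B^0}_{\C^\gamma}\le|X_0|+\bar\sigma\vertiii{\bar B^0}_{\C^\gamma}$ with $\bar\sigma=\sup_{|x|\le1}|\sigma x|$, whence $\vertiii{X_0+\sigma\bar B^0}_{\C^\gamma}^2\le2|X_0|^2+2\bar\sigma^2\vertiii{\bar B^0}_{\C^\gamma}^2$. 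The Cauchy--Schwarz inequality would then yield
\begin{equation*}
\pi(y)\le\frac{e^{C}}{\tau^{dH}}\,\ES\big[e^{4C\tau^\eta|X_0|^2}\big]^{1/2}\,\ES\big[e^{4C\bar\sigma^2\tau^\eta\vertiii{\bar B^0}_{\C^\gamma}^2}\big]^{1/2},
\end{equation*}
uniformly in $y$. Finally, I would fix $\tau$ small enough: Fernique's theorem provides $\mu_1>0$ with $\ES[e^{\mu_1\vertiii{\bar B^0}_{\C^\gamma}^2}]<\infty$, and the invariant measure has a Gaussian tail, i.e.\ $\int e^{\mu_2|x|^2}\pi(dx)<\infty$ for some $\mu_2>0$; choosing $\tau\le T_0$ with $4C\tau^\eta\le\mu_2$ and $4C\bar\sigma^2\tau^\eta\le\mu_1$ makes both expectations finite, so that the right-hand side above is a finite constant not depending on $y$, giving $\|\pi\|_\infty<\infty$.

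The one genuinely delicate point is that the prefactor $e^{C(1+\tau^\eta\vertiii{\ell}_{\C^\gamma}^2)}$ of \eqref{eq:upper_bound_conditional_density} is unbounded and has finite exponential moments only for a small enough time horizon (so that $\tau^\eta$ is below the Fernique and invariant-measure exponential-integrability thresholds); hence one cannot take $\tau=1$ and must shrink $\tau$, which is precisely the $\tilde T_0$-truncation already exploited in \cref{lem:espcondkh}. A secondary ingredient is the Gaussian integrability $\int e^{\mu_2|x|^2}\pi(dx)<\infty$ under the weak hypothesis \ref{hyp1}; this is standard (a Lyapunov/a-priori moment estimate based on the contractivity of $b$ outside the ball of radius $R$) and is in any case part of \eqref{eq:gaussian_bounds}.
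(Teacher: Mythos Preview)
Your proposal is correct. The paper's own proof is even shorter than your first remark: it is the single sentence ``This is a consequence of \cite[Theorem~1.1]{LPS22} with $k=0$,'' which is precisely the Gaussian upper bound recalled in \cref{prop:li_sieber}.3(a) and which you already identify as the immediate route.

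Your alternative argument via \eqref{eq:upper_bound_conditional_density}, stationarity, Fernique, and the Gaussian tail of $\pi$ is also correct; it is not a genuinely different approach but rather an explicit unpacking of the mechanism behind \cite[Theorem~1.1]{LPS22} using the conditional density bound \cite[Proposition~4.7]{LPS22} already exploited in \cref{lem:espcondkh}. What this buys is a self-contained argument within the paper's notation, at the cost of reproducing work that the citation handles in one line.
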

\begin{proof}
    This is a consequence of \cite[Theorem 1.1]{LPS22} with $k=0$.
\end{proof}

\begin{lemma}\label{lem:appenA} Assume \ref{hyp1}. Let $\varpi>0$. Let $\varsigma $ be a ${\cal C}^2$-function on $\ER_+$. Let $(x_t)_{t\ge0}$ be a continuous function and consider the ode 
$$\dot{\rho}_t=b(x_t+\rho_t)-b(x_t)+\dot{\varsigma}_t-\varphi_t$$
with
$$\varphi_t= {\varpi}{|\rho_t|}^{-\frac{1}{2}}\rho_t+\lambda \rho_t+\dot{\varsigma}_t.$$
Then, for any starting point $\rho_0$, the ode admits a unique solution on $\ER_+$ and if $\varpi=2|\rho_0|^{\frac{1}{2}}$, then $\rho_1=0$,
\bqn\label{eq:phiinfty1}
\|\varphi_t\|_{\infty,[0,1]}\lesssim |\rho_0|+\|\dot{\varsigma}_t\|_{\infty,[0,1]}
\eqn
and 
\begin{equation*}\label{eq:phidotinfty}
\|\dot{\varphi}_t\|_{\infty,[0,1]}\lesssim |\rho_0|+\|\ddot{\varsigma}_t\|_{\infty,[0,1]}.
\end{equation*}
\end{lemma}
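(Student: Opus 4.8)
`\textbf{Proof proposal for Lemma \ref{lem:appenA}.}`\textbf{Proof proposal for Lemma \ref{lem:appenA}.} The plan is to first eliminate $\varphi_t$ from the equation: substituting the given expression for $\varphi_t$, the two occurrences of $\dot{\varsigma}_t$ cancel and the equation becomes the $\varsigma$-free ODE
\[
\dot\rho_t = b(x_t+\rho_t)-b(x_t)-\varpi\,|\rho_t|^{-1/2}\rho_t-\lambda\rho_t .
\]
The right-hand side is continuous in $(t,\rho)$, so Peano's theorem gives a local solution, and the a priori bound below ($|\rho_t|\le|\rho_0|$ along any solution) rules out explosion and yields global existence. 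For uniqueness I would argue that away from $\rho=0$ the map $\rho\mapsto|\rho|^{-1/2}\rho$ is $C^\infty$ and $b$ is Lipschitz, so the field is locally Lipschitz and the solution is unique on $[0,\sigma)$ with $\sigma:=\inf\{t:\rho_t=0\}$; once the trajectory hits $0$ it stays there because $t\mapsto|\rho_t|^2$ is nonincreasing (see below), so two solutions with the same $\rho_0$ have the same $\sigma$, agree on $[0,\sigma)$, and both vanish afterwards. (If $\rho_0=0$ the statement is void, as $\varpi=0$ is excluded and $\rho\equiv0$.)

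The core is the energy estimate. With $u_t:=|\rho_t|^2$, Hypothesis \ref{hyp1} gives $\langle b(x_t+\rho_t)-b(x_t),\rho_t\rangle\le\lambda|\rho_t|^2$ in \emph{all} cases (the contractive alternative $-\kappa|\cdot|^2\le 0\le\lambda|\cdot|^2$), so
\[
\dot u_t = 2\langle\rho_t,\dot\rho_t\rangle \le 2\lambda|\rho_t|^2-2\varpi|\rho_t|^{3/2}-2\lambda|\rho_t|^2 = -2\varpi\,u_t^{3/4}.
\]
Hence $u$ is nonincreasing, so $|\rho_t|\le|\rho_0|$ for all $t$; and on $\{u_t>0\}$ this gives $\frac{d}{dt}u_t^{1/4}\le-\varpi/2$, so $u_t^{1/4}\le|\rho_0|^{1/2}-\varpi t/2$ until $u$ vanishes. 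With $\varpi=2|\rho_0|^{1/2}$ the right-hand side reaches $0$ at $t=1$, which together with monotonicity forces $u_1=0$, i.e.\ $\rho_1=0$.

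For the bounds, \eqref{eq:phiinfty1} follows directly from $|\varphi_t|\le\varpi|\rho_t|^{1/2}+\lambda|\rho_t|+|\dot\varsigma_t|$ using $|\rho_t|\le|\rho_0|$ and $\varpi|\rho_t|^{1/2}\le\varpi|\rho_0|^{1/2}=2|\rho_0|$, whence $|\varphi_t|\lesssim|\rho_0|+\|\dot\varsigma\|_{\infty,[0,1]}$. For \eqref{eq:phidotinfty} I would differentiate on $\{\rho_t\neq0\}$: since $|D(|\rho|^{-1/2}\rho)|\lesssim|\rho|^{-1/2}$ and $|\dot\rho_t|\le[b]_{\mathrm{Lip}}|\rho_t|+\varpi|\rho_t|^{1/2}+\lambda|\rho_t|$, one gets $\varpi|\rho_t|^{-1/2}|\dot\rho_t|\lesssim\varpi|\rho_t|^{1/2}+\varpi^2\lesssim|\rho_0|$ and $\lambda|\dot\rho_t|\lesssim|\rho_0|$ (again using $\varpi|\rho_0|^{1/2}=2|\rho_0|$), so adding $|\ddot\varsigma_t|$ yields $|\dot\varphi_t|\lesssim|\rho_0|+\|\ddot\varsigma\|_{\infty,[0,1]}$; on $[\sigma,\infty)$ one has $\varphi_t=\dot\varsigma_t$ and the same bounds hold, so they hold for a.e.\ $t$ (equivalently, bound the Lipschitz constant of $\varphi$), which is all that is used downstream.

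The main obstacle is the non-Lipschitz singularity of the drift field at $\rho=0$: it needs care for uniqueness (resolved via monotonicity of $|\rho_t|^2$, which pins the trajectory at $0$ beyond the hitting time) and for \eqref{eq:phidotinfty} at the hitting time itself (resolved by noting the derivative bound holds off a single point while $\varphi$ stays continuous). Conversely, it is exactly the $|\rho|^{-1/2}$ factor that makes the extinction time finite and of order $|\rho_0|^{1/2}$, so that the calibration $\varpi=2|\rho_0|^{1/2}$ produces $\rho_1=0$.
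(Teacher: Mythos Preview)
Your proposal is correct and follows essentially the same route as the paper: eliminate $\dot\varsigma$ by substitution, derive the energy inequality $\dot u_t\le -2\varpi u_t^{3/4}$ from \ref{hyp1}, integrate to get $|\rho_t|^{1/2}\le|\rho_0|^{1/2}-\varpi t/2$ (hence $\rho_1=0$ for $\varpi=2|\rho_0|^{1/2}$ and $|\rho_t|\le|\rho_0|$), and then read off both norm bounds by direct estimation of $\varphi$ and its derivative. You are somewhat more explicit than the paper about existence via Peano and about what happens at and after the hitting time $\sigma$, but the substance is identical.
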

\begin{remark} The above lemma may be surprising since the ode does not depend on $\varsigma$ (as we add and substract $\dot{\varsigma}$).  Nevertheless, it must be understood as follows. To the ode $\dot{\rho}_t=b(x_t+\rho_t)-b(x_t)+\dot{\varsigma}_t$, we \ar{add} a \textit{control} $\varphi$ which is such that the modified $\rho$ satisfies $\rho_1=0$. 
\end{remark}
\begin{proof}
Set $z_t=|\rho_t|^2$. By construction and Assumption \ref{hyp1}, if $\rho$ is a solution, then
\begin{align*}
\dot{z}_t=2\langle b(x_t+\rho_t)-b(x_t)-{\varpi}{|\rho_t|}^{-\frac{1}{2}}\rho_t-\lambda \rho_t, \rho_t\rangle\le   -2{\varpi}{|\rho_t|}^{\frac{3}{2}}= -2{\varpi} z(t)^{\frac{3}{4}}.
\end{align*}
Set $t_0:=\inf\{t\ge0, \rho_t=0\}$. If $\rho$ is a solution, then one deduces from what precedes that $\partial_t (|\rho_t|^{\frac{1}{2}})\le - \frac{\varpi}{2}$ on $[0,t_0)$  so that on $\ER_+$,
\bqn\label{eq:controlrhott}
|\rho_t|^{\frac{1}{2}}\le \max(|\rho_0|^{\frac{1}{2}}-{\frac{\varpi t}{2} } ,0).
\eqn
Since the ode has the form $\dot{\rho}=F(\rho)$ with a function $F$ which is locally Lispchitz continuous on $\ER^d\backslash\{0\}$ and such that $F(0)=0$, we deduce from what precedes that the equation has a unique solution on $\ER_+$. Furthermore, it also follows from \eqref{eq:controlrhott} that if $\varpi=2|\rho_0|^{\frac{1}{2}}$, then $|\rho_1|=0$ and  the fact that $|\rho_t|\le |\rho_0|$ (by \eqref{eq:controlrhott}) implies \eqref{eq:phiinfty1}. 
Finally, still with $\varpi=2|\rho_0|^{\frac{1}{2}}$, one checks that
$$|\dot{\varphi}_t|\lesssim |\rho_0|+ |\rho_0|^{\frac{1}{2}} |\rho_t|^{-\frac{1}{2}}|\dot{\rho}_t|+|\dot{\rho}_t|+|\ddot{\varsigma}_t|.$$
But since $b$ is Lipschitz continuous, we have
$$|\dot{\rho}_t|=\tcr{|b(x_t+\rho_t)-b(x_t)+\dot{\varsigma}_t-{\varpi}{|\rho_t|}^{-\frac{1}{2}}\rho_t-\lambda \rho_t|}\lesssim |\rho_t|+|\rho_0|^{\frac{1}{2}}|\rho_t|^{\frac{1}{2}}.$$
The second bound follows.
\end{proof}

\begin{proof}[Proof of \cref{prop:TVTV}]
With the notations of \cref{cor:L2bound}, let 
$$
\begin{cases}
  \Phi_t(\ell(x,w))=\ell_t(x,w)+\int_0^t b\big(\Phi_s(\ell(x,w))\big)\,ds+\sigma\int_0^t (t-s)^{H-\frac{1}{2}} dW_s,\qquad\\
   \widetilde{\Phi}_t(\ell(y,\tilde{w}))=\ell_t(y,\tilde{w})+\int_0^t b\big(\widetilde{\Phi}_s(\ell(y,\tilde{w}))\big)\,ds+\sigma\int_0^t (t-s)^{H-\frac{1}{2}} d\widetilde{W}_s,\qquad t\geq 0,
   \end{cases}
   $$
   where $(W,\widetilde{W})$ denotes a coupling of Brownian motions on $\ER_+$. In order to shorten the notations, we will usually write $\Phi_t$ and $\widetilde{\Phi}_t$ instead of $\Phi_t(\ell(x,w))$ and  $ \widetilde{\Phi}_t(\ell(x,\tilde{w}))$.  Let $\tau\ge 1$. The idea of the proof is to bound 
$$\PE(\Phi_{\tau+1}\neq \widetilde{\Phi}_{\tau+1}|{\cal F}_{\tau})$$
by a coupling argument and then to use \cref{cor:L2bound} to deduce the announced result.

Assume that on a subset of $\Omega$,
\bqn\label{eq:girsident}
\widetilde{W}_t-\widetilde{W}_\tau=W_t-W_\tau+\int_\tau^{.} \psi^\tau(s) ds\quad \forall t\in[\tau,\tau+1].
\eqn
{Under appropriate assumptions (see \cite[Lemma 4.2]{Hairer} for details), this implies that 
\begin{equation*}\label{eq:deriveW}
\int_\tau^t (t-s)^{H-\frac{1}{2}} d\widetilde{W}_s=\int_\tau^t (t-s)^{H-\frac{1}{2}} d{W}_s+\int_{\tau}^t \varphi^\tau(s) ds\quad \textnormal{on $[\tau,\tau+1]$},
\end{equation*}
with 
$$\varphi^\tau(t)=\frac{d}{dt}\int_\tau^t (t-s)^{H-\frac{1}{2}} \psi^\tau(s) ds,$$
and with the following reverse property: 
\begin{equation}\label{eq:corpsiphi}
\psi^\tau(t)= \gamma_H\frac{d}{dt}\int_{\tau}^t (t-u)^{\frac{1}{2}-H} \varphi^\tau(u) du,\quad t\in[\tau,\tau+1],\quad \gamma_H\in\ER.
\end{equation}}
With these notations, one remarks that for all $t\in[\tau,\tau+1]$,
$$\tilde{\Phi}_t-\Phi_t=\tilde{\Phi}_\tau-\Phi_\tau+\int_\tau^t b(\tilde{\Phi}_s)-b({\Phi}_s) ds+ \sigma \int_\tau^t \varphi^\tau(s) ds+ \varsigma_t^\tau $$
with 
\begin{align*}
\varsigma^\tau_t&= \ell_t(y,\tilde{w})-\ell_\tau(y,\tilde{w})-(\ell_t(x,w)-\ell_\tau(x,w))\\
&=\left(\frac{1}{2}-H\right)\int_{-a}^0 \left((t-s)^{H-\frac{3}{2}}-(\tau-s)^{H-\frac{3}{2}} \right)(\tilde{w}_s-w_s) ds,
\end{align*}
with $a=\infty$ in the general case or $a=1$ in the particular case where $w$ and $\tilde{w}$ are such that $w_t=\tilde{w}_t$ on $(-\infty,-1]$.
One checks that for all $t\in[\tau,\tau+1]$,
$$\dot{\varsigma}^\tau_t=\left(\frac{1}{2}-H\right)\left(\frac{3}{2}-H\right)\int_{-a}^0 (t-s)^{H-\frac{5}{2}}(\tilde{w}_s-w_s) ds$$
and hence, acting as in previous proposition in order to obtain \eqref{eq: 1} and \eqref{eq: 2}, respectively, we obtain that for every $\tau\le t\le \tau+1$,
\begin{align*}
|\dot{\varsigma}^\tau_t|\lesssim\begin{cases} \frC(w-\tilde{w},\epsilon)  \tau^{H-1+\frac{\varepsilon}{2}}&\textnormal{if $a=\infty$}\\
 \|\tilde{w}-w\|_{\infty,[-1,0]} \tau^{H-\frac{5}{2}}&\textnormal{if $a=1$.}
\end{cases}
\end{align*}
Similarly, 
\begin{align*}
|\ddot{\varsigma}^\tau_t|\lesssim\begin{cases} \frC(w-\tilde{w},\epsilon)  \tau^{H-2+\frac{\varepsilon}{2}}&\textnormal{if $a=\infty$}\\
 \|\tilde{w}-w\|_{\infty,[-1,0]} \tau^{H-\frac{7}{2}}&\textnormal{if $a=1$.}
\end{cases}
\end{align*}

%

By \cref{lem:appenA} (stated and proved in \cref{s: tech}) applied with $\rho_0=\tilde{\Phi}_\tau-{\Phi}_\tau$, ${\varsigma}={\varsigma}^\tau(\tau+.)$, $x(t)=\tilde{\Phi}_{\tau+t}-{\Phi}_{\tau+t}$, we get the existence of an adapted $\varphi^\tau$ such that if \eqref{eq:girsident} holds, then 
$\Phi_{\tau+1}\,\tcr{=} \,\widetilde{\Phi}_{\tau+1}$ and $\varphi^\tau$ satisfies the following bounds:
\begin{equation*}\label{eq:controlphitau}
\|\varphi^\tau\|_{\infty,[\tau,\tau+1]}\lesssim |\tilde{\Phi}_\tau-{\Phi}_\tau|+\begin{cases} \frC(w-\tilde{w},\epsilon)  \tau^{H-1+\frac{\varepsilon}{2}}&\textnormal{if $a=\infty$}\\
 \|\tilde{w}-w\|_{\infty,[-1,0]} \tau^{H-\frac{5}{2}}&\textnormal{if $a=1$}
\end{cases}
\end{equation*}
and
\begin{equation*}{\label{eq: varphi dot}}
\|\dot{\varphi}^\tau\|_{\infty,[\tau,\tau+1]}\lesssim |\tilde{\Phi}_\tau-{\Phi}_\tau|+\begin{cases} \frC(w-\tilde{w},\epsilon)  \tau^{H-2+\frac{\varepsilon}{2}} &\textnormal{if $a=\infty$}\\
 \|\tilde{w}-w\|_{\infty,[-1,0]} \tau^{H-\frac{7}{2}} &\textnormal{if $a=1$.}
\end{cases}
\end{equation*}
%
By \eqref{eq:corpsiphi}, one checks that
$$ \|\psi^\tau\|_{\infty,[\tau,\tau+1]}\lesssim  \|\varphi^\tau\|_{\infty,[\tau,\tau+1]}+\|\dot{\varphi}^\tau\|_{\infty,[\tau,\tau+1]}\ind{H>1/2}.$$
Hence, 
$$ \|\psi^\tau\|_{\infty,[\tau,\tau+1]}\lesssim   |\tilde{\Phi}_\tau-{\Phi}_\tau|+\begin{cases} \frC(w-\tilde{w},\epsilon)  \tau^{H-1+\frac{\varepsilon}{2}}&\textnormal{if $a=\infty$}\\
 \|\tilde{w}-w\|_{\infty,[-1,0]} \tau^{H-\frac{5}{2}}&\textnormal{if $a=1$.}
\end{cases}
$$

%
By construction, when \eqref{eq:girsident} holds true, then we have $\Phi_{\tau+1}= \widetilde{\Phi}_{\tau+1}$. Hence,
$$\PE\left(\Phi_{\tau+1}\neq \widetilde{\Phi}_{\tau+1}|{\cal F}_{\tau}\right)\le 1-\PE(\widetilde{W}_t-\widetilde{W}_\tau=W_t-W_\tau+\int_\tau^{t} \psi^\tau(s) ds, t\in[\tau,\tau+1]|{\cal F}_{\tau}).$$
Due to the independence of the increments of the Brownian motion, it follows that almost surely,
 $$\inf_{(W,\tilde{W})}\PE\left(\Phi_{\tau+1}\neq \widetilde{\Phi}_{\tau+1}|{\cal F}_{\tau}\right)\le \frac{1}{2}\|\PE_W- \Upsilon^*\PE_W\|_{{\rm TV}}$$
 where $\Upsilon$  is defined by $\Upsilon(w)=w+\int_0^. \psi^\tau(\tau+s) ds$ and $\PE_W$ denotes the Wiener distribution on ${\cal C}([0,1],\ER^d)$. \\

\noindent By Girsanov's Theorem, we know that $\Upsilon^*\PE_W$ is absolutely continuous with respect to
$\PE_W$ with density $D_1$, where $(D_t)_{t\ge0}$ is the true martingale defined by:
$$D_t(w)=\exp\left(\int_0^t \psi^\tau(\tau+s)  dw(s)-\frac{1}{2}\int_0^t |\psi^\tau(\tau+s) |^2 ds\right), \quad t\in[0,1].$$
Thus, by Pinsker inequality, almost surely,
\begin{align*}
\|\PE_W- \Upsilon^*\PE_W\|_{{\rm TV}}&\le
 \sqrt{\frac{1}{2}H(\Upsilon^*\PE_W|\PE_W)}= \left(\frac{1}{2}\int \log( D_1(w)^{-1})\PE_{W}(dw)\right)^{\frac{1}{2}}\\
&\le \frac{1}{2}\left({\int} \int_0^{1} |\psi^\tau(\tau+s)|^2 ds ~{\PE_{W}(dw)}\right)^{\frac{1}{2}}\\
&\lesssim  |\tilde{\Phi}_\tau-{\Phi}_\tau|+\begin{cases} \frC(w-\tilde{w},\epsilon)  \tau^{H-1+\frac{\varepsilon}{2}}&\textnormal{if $a=\infty$}\\
 \|\tilde{w}-w\|_{\infty,[-1,0]} \tau^{H-\frac{5}{2}}&\textnormal{if $a=1$.}
\end{cases}
\end{align*}
%
%
As a consequence,
\begin{align*}
 \|{\cal L}(\Phi_{\tau+1}-{\cal L}(\widetilde{\Phi}_{\tau+1})\|_{TV}\le \ES[|\tilde{\Phi}_\tau-{\Phi}_\tau|]+\begin{cases} \frC(w-\tilde{w},\epsilon)  \tau^{H-1+\frac{\varepsilon}{2}}&\textnormal{if $a=\infty$}\\
 \|\tilde{w}-w\|_{\infty,[-1,0]} \tau^{H-\frac{5}{2}}&\textnormal{if $a=1$.}
\end{cases}
\end{align*}
To conclude, it is now enough to use \cref{cor:L2bound} \TCR{to control $\ES[|\tilde{\Phi}_\tau-{\Phi}_\tau|]$ and to deduce the result}.
\end{proof}

\begin{proof}[Proof of \cref{prop: bias}]
Notice that the concept outlined below follows a classical approach, akin to Proposition 2 in \cite{adaptive}, sharing a similar objective. 
Given the definition of the kernel estimator we have introduced and the stationarity of the process, we arrive at the following
\begin{align*}
|\mathbb{E}[\hat{\pi}_{h,T}(x)] - \pi(x)| & = |\int_{\R^d} \mathbb{K}_h(x - y) \pi(y) dy - \pi(x)| \\
& = |\frac{1}{\TCR{\prod_{l = 1}^d h_l}}\int_{\R^d} \prod_{l = 1}^d {K}(\frac{x_l - y_l}{\TCR{h_l}}) \pi(y) dy - \pi(x)| \\
& = |\int_{\R^d} \prod_{l = 1}^d {K}(z_l)[ \pi(x_1 - \TCR{h_1 z_1, ... , x_d - h_d z_d}) - \pi(x)] dz|,
\end{align*}
having applied in the last equality the change of variable $\frac{x_l - y_l}{\TCR{h_l}}= z_l$, together with the fact that $\int_{\R^d} \prod_{l = 1}^d {K}(z_l) dz = 1$, by definition of the kernel function we introduced. 
Then, we can use Taylor formula applied iteratively to the functions $t \mapsto \pi(x_1 - \TCR{h_1} z_1, ... , x_{d-1} - \TCR{h_d} z_{d-1}, t)$, ... $t \mapsto (t, x_2 - \TCR{h_2} z_2, ... , x_d - \TCR{h_d} z_d)$. It implies we can write
\begin{align*}
& \pi(x_1 - \TCR{h_1} z_1, ... , x_d - \TCR{h_d} z_d) 
 = \pi(x) + \sum_{j = 1}^d (\sum_{k = 1}^{\lfloor \TCR{\beta_j} \rfloor - 1}\frac{D_j \pi(\tilde{x}_j)}{k !}(\TCR{h_j} z_j)^k \\
 &\qquad + \frac{D_j^{\lfloor \TCR{\beta_j} \rfloor}\pi(x_1 - \TCR{h_1} z_1, ... , x_j - u_j \TCR{h_j} z_j, x_{j + 1}, ... , x_d )}{\lfloor \TCR{\beta_j} \rfloor !} (\TCR{h_j} z_j)^{\lfloor \TCR{\beta_j} \rfloor}),
\end{align*}
where we introduced $u \in [0, 1]$ and $\tilde{x}_j := (x_1 - \TCR{h_1} z_1, ... , x_{j-1} - \TCR{h_{j - 1}} z_{j-1}, x_{j}, ... , x_d )$. The purpose of the last is to monitor the component we are focusing on during the iterative application of the Taylor formula. Using then that the kernel function is of order $M \ge \TCR{\max_i \beta_i}$ we obtain, for any $j \in \{1, ... , d \}$
$$\int_{\R} K(z_j) \sum_{k = 1}^{\lfloor \TCR{\beta_j} \rfloor - 1} \frac{D_j \pi(\tilde{x}_j)}{k !}(\TCR{h_j} z_j)^k dz_j = 0. $$
Here it is crucial the fact that $\tilde{x}_j$ depends only on $z_l$, for $l < j$. Observe moreover that the fact that the kernel function is of order $M \ge \TCR{\max_i \beta_i}$ also implies
$$\int_{\R^d} \prod_{l = 1}^d K(z_l) \sum_{j = 1}^{d} \frac{(\TCR{h_j} z_j)^{\lfloor \TCR{\beta_j} \rfloor}}{\lfloor \TCR{\beta_j} \rfloor !} D_j^{\lfloor \TCR{\beta_j} \rfloor} \pi(x) dz = 0. $$
Then, we obtain
\begin{align*}
&|\mathbb{E}[\hat{\pi}_{h,T}(x)] - \pi(x)|  \le \int_{\R^d} |\prod_{l = 1}^d {K}(z_l)| \\
&\qquad \times \Big( \sum_{j = 1}^{d} \frac{(\TCR{h_j} z_j)^{\lfloor \TCR{\beta_j} \rfloor}}{\lfloor \TCR{\beta_j} \rfloor !} |D_j^{\lfloor \TCR{\beta_j} \rfloor} \pi(x_1 - \TCR{h_1} z_1, ... , x_j - u_j \TCR{h_j} z_j, x_{j + 1}, ... , x_d ) - \pi(x) | \Big)dz \\
& \le \int_{\R^d} |\prod_{l = 1}^d {K}(z_l)| \sum_{j = 1}^{d} \frac{(\TCR{h_j} z_j)^{\lfloor \TCR{\beta_j} \rfloor}}{\lfloor \TCR{\beta_j} \rfloor !} L|u_j \TCR{h_j} z_j|^{\TCR{\beta_j} - \lfloor \TCR{\beta_j} \rfloor} dz,
\end{align*}
having used that $\pi \in \mathcal{H}_d(\beta, L)$. The proof is concluded once one remarks that 
$$\int_{\R^d} |\prod_{l = 1}^d {K}(z_l)| \sum_{j = 1}^{d} \frac{(\TCR{h_j} z_j)^{\lfloor \TCR{\beta_j} \rfloor}}{\lfloor \TCR{\beta_j} \rfloor !} L|u_j \TCR{h_j} z_j|^{\TCR{\beta_j - \lfloor \beta_j \rfloor}} dz \le c \TCR{h_j^{\beta_j}},$$
with a constant $c$ independent of $x$. 

\end{proof}

\end{document}